\documentclass{amsart}


\usepackage[left=1.72in,top=1.46in,right=1.72in,bottom=1.46in]{geometry}

\usepackage{amsmath}
\usepackage{amssymb}
\usepackage{amsfonts}
\usepackage{amsopn}
\usepackage{amsthm}

\usepackage{graphicx}

\usepackage{mathrsfs}



\swapnumbers
\theoremstyle{plain}
\newtheorem{thm}{Theorem}[section]
\newtheorem{lem}[thm]{Lemma}
\newtheorem{cor}[thm]{Corollary}
\newtheorem{prop}[thm]{Proposition}
\newtheorem*{conj}{Conjecture}
\newtheorem*{ques}{Question}

 \newtheorem{thmintro}{Theorem}

\theoremstyle{definition}
\newtheorem{ntt}[thm]{}
\newtheorem{ex}[thm]{Example}
\newtheorem{eg}[thm]{Example}
\newtheorem{rem}[thm]{Remark}
\newtheorem{dfn}[thm]{Definition}
\newtheorem{defn}[thm]{Definition}
\newtheorem{fact}[thm]{Fact}

\numberwithin{equation}{section}


\newcommand{\B}{\mathcal{B}}       
\newcommand\PO{\mathcal{P}}    

\newcommand{\zz}{\mathbb{Z}}       
\newcommand{\Z}{\zz}

\newcommand{\R}{\mathbb{R}}
\newcommand{\Q}{\mathbb{Q}}
\newcommand{\F}{\mathbb{F}}
\newcommand{\nn}{\mathbb{N}}       

\newcommand{\cc}{\mathfrak{c}}     

\newcommand{\Gb}{\bar{G}}

\newcommand{\sheaf}[1]{\mathscr{#1}}
\newcommand\LL{\sheaf{L}}       

\renewcommand{\P}{\mathbb{P}} 

\newcommand{\category}[1]{\mathsf{#1}}
\newcommand{\Db}{\category{D}^{\mathrm{b}}} 
\newcommand{\Dperf}{\category{D}^{\mathrm{perf}}} 

\newcommand{\Spec}{\mathrm{Spec}\,}
\newcommand{\car}{\mathrm{char}\,}
\newcommand{\tensor}{\otimes}
\newcommand{\isom}{\cong}
\newcommand{\dual}{^{\vee}}

\newcommand{\ol}[1]{\overline{#1}}
\newcommand{\linedef}[1]{\textit{#1}} 

\DeclareMathOperator{\SL}{SL}
\DeclareMathOperator{\SO}{SO}
\DeclareMathOperator{\Sp}{Sp}

\DeclareMathOperator{\Pic}{Pic}
\DeclareMathOperator{\Ext}{Ext}
\DeclareMathOperator{\Hom}{Hom}

\newcommand{\la}{\lambda}
\renewcommand{\deg}{$^\circ$}
\newcommand{\case}[1]{\smallskip\underline{\emph{Case #1:}}}

\usepackage{hyperref}
\hypersetup{pdftitle={Exceptional collections of line bundles on projective homogeneous varieties}}
\hypersetup{pdfauthor={Alexey Ananyevskiy, Asher Auel, Skip Garibaldi and Kirill Zainoulline}}
\hypersetup{colorlinks=true,linkcolor=blue,anchorcolor=blue,citecolor=blue,linktoc=page}

\begin{document}

\title[Exceptional collections of line bundles]{Exceptional
  collections of line bundles on projective homogeneous varieties}

\author[Ananyevskiy, Auel, Garibaldi, and Zainoulline]{Alexey Ananyevskiy, Asher Auel, Skip Garibaldi and Kirill Zainoulline}

\address{(Alexey Ananyevskiy)   Department of Mathematics and Mechanics, St.~Petersburg State University, Universitetskiy Prospekt 28, St.~Petersburg, 198504, Russia}
\address{(Asher Auel and Skip Garibaldi) Department of Mathematics \& Computer Science, Emory University, Atlanta, GA 30307, USA}
\address{(Kirill Zainoulline) Department of Mathematics and Statistics, University of Ottawa, 585~King Edward, Ottawa ON K1N6N5, Canada}

\subjclass[2010]{Primary 14F05; Secondary 
14M15, 17B22, 20G15}
\keywords{}

\thanks{\texttt{Version of \today.}}


\begin{abstract}
We construct new examples of exceptional collections of line bundles
on the variety of Borel subgroups of a split semisimple linear
algebraic group $G$ of rank 2 over a field.  We exhibit exceptional
collections of the expected length for types $A_2$ and $B_2 = C_2$ and
prove that no such collection exists for type $G_2$.  This settles the
question of the existence of full exceptional collections of line
bundles on projective homogeneous $G$-varieties for split linear
algebraic groups $G$ of rank at most 2.
\end{abstract}

\maketitle

\setcounter{tocdepth}{1}
\tableofcontents


\section*{Introduction}

The existence question for full exceptional collections in the bounded
derived category of coherent sheaves $\Db(X)$ of a smooth projective
variety $X$ goes back to the foundational results of Be{\u\i}linson
\cite{beilinson1}, \cite{beilinson2} and
Bern\v{s}te{\u\i}n--Gelfand--Gelfand \cite{BGG} for $X=\P^n$.  The
works of Kapranov \cite{kapranov:Grassman1},
\cite{kapranov:Grassmann2}, \cite{kapranov:quadric},
\cite{kapranov:homogeneous} suggested that the structure of projective
homogeneous variety on $X$ should imply the existence of full
exceptional collections.

\begin{conj}
 Let $X$ be a projective homogeneous variety of a split semisimple
 linear algebraic group $G$ over a field of characteristic zero. Then
 there exists a full exceptional collection of vector bundles in
 $\Db(X)$.
\end{conj}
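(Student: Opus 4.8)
The plan is to reduce the general case to that of the variety $X=G/B$ of Borel subgroups and then build the collection by induction on the semisimple rank of $G$. For a parabolic $P\supseteq B$ the projection $\pi\colon G/B\to G/P$ is a Zariski-locally trivial fibration with fiber $P/B\cong L/(L\cap B)$, the flag variety of the Levi subgroup $L$ of $P$, a split semisimple group of strictly smaller rank; choosing $P$ maximal makes the base $G/P$ a ``generalized Grassmannian.'' If one has a full exceptional collection $(F_1,\dots,F_r)$ of vector bundles on the fiber that is moreover $L$-equivariant --- i.e.\ obtained from $L$-representations via the associated-bundle construction --- then I would expect the family $\pi^{*}\Db(G/P)\tensor F_1,\dots,\pi^{*}\Db(G/P)\tensor F_r$ to give a semiorthogonal decomposition of $\Db(G/B)$ into $r$ copies of $\Db(G/P)$, via a relative Be{\u\i}linson-type resolution of the relative diagonal of $\pi$. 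Splicing in a full exceptional collection of vector bundles on $G/P$ then produces one on $G/B$ of length $r\cdot\#(W/W_P)=\#(W_P)\cdot\#(W/W_P)=\#W$, which is exactly $\dim_\Q H^{*}(G/B;\Q)$, the expected length.

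So the real content sits in the base case: full exceptional collections of vector bundles on the generalized Grassmannians $G/P$. Here I would follow Kapranov's template: take the homogeneous (``tautological'') bundles attached to the small fundamental representations of the Levi, apply suitable Schur/Weyl functors to them and their duals, twist by powers of the ample generator of $\Pic(G/P)\isom\Z$, and verify exceptionality by computing all $\Ext$-groups with Bott's theorem (valid in characteristic zero), which turns every $\Ext$ into a dominance/singularity test on $\rho$-shifted weights. A necessary sanity check that must pass before anything else: the classes $[F_i\tensor\OO(j)]$ should form a $\Z$-basis of $K_0(G/P)$, whose rank is $\#(W/W_P)$. The nontrivial point is \emph{fullness}, that these objects generate $\Db(G/P)$; this does not follow formally, and the standard routes are either to exhibit a Koszul/Be{\u\i}linson resolution of $\OO_\Delta$ on $G/P\times G/P$ assembled from the chosen bundles, or a mutation/``staircase'' argument showing the generated subcategory is stable under the relevant twists and hence everything.

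The main obstacle is precisely fullness for the exceptional-type generalized Grassmannians outside the cominuscule cases, where no uniform construction of the resolution of the diagonal is available and the representation theory (which small bundles exist, which Schur functors of them to use, the combinatorics of $\rho$-shifted weights) has to be handled type by type. A secondary obstacle is that the inductive step needs the fiberwise collection to be genuinely $L$-equivariant and ``rectangular'' enough for the relative resolution to exist --- the stronger notion of a Lefschetz collection compatible with $\pi$, rather than a bare exceptional collection on $P/B$. At the bottom of the induction the base of $\pi$ is a $\P^1$ (or a point), where the classical collection $(\OO,\OO(1))$ suffices, so in small rank the whole scheme goes through; this is morally why types $A_2$ and $B_2=C_2$ are accessible, why one can hope to decide $G_2$ by hand, and why the finer line-bundle question of the paper is posed in rank at most $2$.
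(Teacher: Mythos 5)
The statement you are proving is the displayed \emph{Conjecture}, and the paper does not prove it: the authors state explicitly that it ``remains largely unsolved,'' and their actual theorems are much weaker and partly negative --- they construct $\PO$-exceptional collections of \emph{line bundles} of the expected length for the Borel varieties of split rank-$2$ groups, and they prove that for type $G_2$ no homogeneous variety admits a full exceptional collection of line bundles at all. So there is no proof in the paper to compare yours against, and your text is a strategy outline rather than a proof. The two places where it would have to become a proof are exactly the open problems, and you concede both of them. First, fullness of Kapranov-type collections on the ``generalized Grassmannians'' $G/P$ is known only in special situations (type $A$, quadrics, some isotropic Grassmannians, cf.\ the survey cited as \cite{kuzpol}); saying one would ``exhibit a Koszul/Be{\u\i}linson resolution of $\OO_\Delta$'' or run a mutation argument is naming the missing theorem, not supplying it.

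Second, the inductive step over $\pi\colon G/B\to G/P$ is not automatic even granting an $L$-equivariant full exceptional collection $(F_1,\dots,F_r)$ on the fiber $P/B$. To obtain a semiorthogonal decomposition of $\Db(G/B)$ into copies $\pi^{*}\Db(G/P)\tensor F_i$ one needs the collection to be \emph{relatively} exceptional over the base, i.e.\ $R\pi_{*}\bigl(\mathcal{H}om(F_i,F_j)\bigr)$ must be the trivial line bundle for $i=j$ and must vanish for $i<j$ (or at least the generated subcategories must be semiorthogonal over $G/P$, plus a relative generation statement). $L$-equivariance only tells you these pushforwards are homogeneous bundles on $G/P$; in general they are nonzero and nontrivial, and then the proposed blocks are neither orthogonal nor equivalent to $\Db(G/P)$. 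The tools the paper actually invokes --- Orlov's projective bundle theorem \cite{Orlov} and the fibration result of \cite{CdRM-R} --- cover only $\P^n$-bundles and fibrations whose fiber and base already carry full exceptional collections of line bundles, which is precisely why the paper can settle rank $\le 2$ (everything is built from $\P^1$- and $\P^2$-bundles except the $3$-dimensional quadric and the $G_2$ cases, which are handled by separate $K_0$ and combinatorial arguments) and why the general conjecture stays open. Your counting check $r\cdot\#(W/W_P)=\#W$ is correct but carries no force toward fullness.
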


This conjecture remains largely unsolved; see \cite[\S1.1]{kuzpol} for
a recent survey of known results.  The bounded derived category
$\Db(X)$ has come to be understood as a homological replacement for
the variety $X$; exceptional collections provide a way to break up
$\Db(X)$ into simple components.  Such decompositions of the derived
category can be seen as analogous to decompositions of the motive of
$X$, a relationship that has been put into a conjectural framework by
Orlov~\cite{orlov:motives}.  As an example, the existence of a full
exceptional collection implies a splitting of the Chow motive
$M(X)_{\Q}$ into twists of Lefschetz motives \cite{marcolli_tabuada}.  We remark that such a motivic decomposition
is already known for projective homogeneous varieties of split linear
algebraic groups \cite{kock}.

Let $X$ be a variety over a field $k$.  An object $E$ of $\Db(X)$ is
called \linedef{exceptional} if $\Ext^*(E,E)=k$, cf.\
\cite[Def.~1.1]{GR87}.  Let $W$ be a finite set and let $\PO$ be a
partial order on $W$.  An ordered set (with respect to $\PO$) of
exceptional objects $\{E_w\}_{w\in W}$ in $\Db(X)$ is called a {\em
$\PO$-exceptional collection} if
$$
\Ext^*(E_w,E_{w'})=0\text{ for all }w <_\PO w'.
$$
If $\PO$ is a total order, then a $\PO$-exceptional collection is
simply called an \linedef{exceptional collection}.  A
$\PO$-exceptional collection $\{E_w\}_{w\in W}$ is called
\linedef{full} if the smallest triangulated category containing
$\{E_w\}_{w\in W}$ is $\Db(X)$ itself.  Finally, a $\PO$-exceptional
collection of vector bundles $\{E_w\}_{w\in W}$ is said to be of the
\linedef{expected length} if the cardinality of $W$ equals the minimal
number of generators of the abelian group $K_0(X)$.  Note that any
full $\PO$-exceptional collection of vector bundles is of the expected
length.  It is expected that the converse also holds
\cite[Conj.~9.1]{kuznetsov:hochschild}.

In the present paper we address the following closely related question:

\begin{ques}
Let $X$ be the variety of Borel subgroups of a split semisimple
linear algebraic group $G$ and fix a partial order $\PO$ on the Weyl group $W$ of $G$.
Does $\Db(X)$ have a $\PO$-exceptional collection
of the expected length consisting of line bundles?
\end{ques}

On the one hand, the question strengthens the
conjecture by requiring the collection to consist of line bundles.
On the other hand, it 
weakens the conjecture by
allowing partial orders (such as the weak or strong Bruhat orders)
instead of a total order and allowing the collection to merely
generate $K_0(X)$.

So far, a natural way to propagate known exceptional collections of
line bundles is to use the result of Orlov \cite[Cor.~2.7]{Orlov},
that $\Db(X)$ has a full exceptional collection of line bundles if
there exists a (Zariski locally trivial) projective bundle $X\to Y$
such that $\Db(Y)$ has a full exceptional collection of line bundles.
More generally, $\Db(X)$ has a full exceptional collection of line
bundles if $X$ is the total space of a smooth Zariski locally trivial
fibration, whose fiber and base both have derived categories with full
exceptional collections of line bundles \cite{CdRM-R}.  We remark that
the result of Orlov on semiorthogonal decompositions of projective
bundles (hence that of Be{\u\i}linson and
Bern\v{s}te{\u\i}n--Gelfand--Gelfand on $\P^n$) holds over an
arbitrary noetherian base scheme, cf.\ \cite[\S11]{Walter}.  Using
such techniques, one immediately answers the question positively for
all projective homogeneous varieties $X$ associated to split
semisimple groups of rank 2, except in the following cases:\ type
$B_2=C_2$ and $X$ a 3-dimensional quadric; and type $G_2$ and all $X$
(this includes the case of 5-dimensional quadrics).  These exceptions
can be viewed as key motivating examples for the present paper.

It is not necessarily expected that a $\PO$-exceptional collection of
line bundles of the expected length exists for every projective
homogeneous variety.  For instance, certain Grassmannians of type
$A_n$ have full exceptional collections of vector bundles, but not of
line bundles.  This distinction is highlighted in
\cite[Problem~1.2]{CdRM-R}.  Observe also that if $K_0(X)$ is not
generated by line bundles, then $\Db(X)$ can not possess a
$\PO$-exceptional collection of line bundles of the expected length
for obvious reasons.  For example, we prove such a result in
Proposition~\ref{propexist}.  Finally, $\PO$-exceptional collections
may exist, while exceptional collections may not.

\medskip
We will now outline our main results. The paper consists of two parts.
In Part~\ref{positive}, using $K_0$ techniques, we introduce a new
purely combinatorial algorithm for constructing $\PO$-exceptional
collections of line bundles based on a different description of the
Steinberg basis \cite{steinberg} obtained in \cite{An}.  We apply it
to show the following:

\begin{thmintro}
\label{posAB} 
Let $X$ be the variety of Borel subgroups of a split semisimple linear
algebraic group $G$ of rank 2 over a field of characteristic
zero. Then $\Db(X)$ has a $\PO$-exceptional collection of the expected
length consisting of line bundles, for $\PO$ a partial order
isomorphic to the left weak Bruhat order on the Weyl group of $G$.
\end{thmintro}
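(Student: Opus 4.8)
The plan is to turn the statement into a finite combinatorial verification on the weight lattice. As $X$ is the variety of Borel subgroups we may replace $G$ by its simply connected cover without changing $X$; then $\Pic(X)$ is the full weight lattice and every line bundle on $X$ is of the form $\mathcal{L}(\la)$ for a weight $\la$. For weights $\la,\mu$,
$$
\Ext^*\!\big(\mathcal{L}(\la),\mathcal{L}(\mu)\big)=H^*\!\big(X,\mathcal{L}(\mu-\la)\big),
$$
and in characteristic zero Borel--Weil--Bott says this group vanishes in all degrees precisely when $\mu-\la+\rho$ lies on a wall of the Weyl chambers ($\rho$ the half-sum of the positive roots). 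In particular $\Ext^*(\mathcal{L}(\la),\mathcal{L}(\la))=H^*(X,\mathcal{O}_X)=k$, so every line bundle on $X$ is exceptional. Hence it suffices to exhibit weights $\{\la_w\}_{w\in W}$, indexed by the Weyl group and of the right number, so that, ordering the collection by $w\mapsto\mathcal{L}(\la_w)$, one has $\la_{w'}-\la_w+\rho$ singular whenever $w<_{\PO}w'$ for $\PO$ the left weak Bruhat order.

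The counting is automatic: for split $G$ the group $K_0(X)$ is free abelian of rank $|W|$, so its minimal number of generators is $|W|$ and any list of $|W|$ line bundles has the expected length. I would take the $\la_w$ to be a Steinberg basis \cite{steinberg} in the normalization of \cite{An}, so that in fact $\{[\mathcal{L}(\la_w)]\}_{w\in W}$ is a $\Z$-basis of $K_0(X)$ (the collection even generates) and $\la_e=0$, i.e.\ $\mathcal{L}(\la_e)=\mathcal{O}_X$ is the least object. The purpose of the combinatorial algorithm of Part~\ref{positive}, built on the \cite{An} description of the Steinberg basis, is to fix the reduced-word data so that the differences $\la_{w'}-\la_w$ land on walls along the weak order; since $e$ is the minimum this in particular forces $\la_w+\rho$ to be singular for every $w\neq e$, a constraint the unmodified (e.g.\ dominant) Steinberg basis need not satisfy, so the choice of normalization is essential.

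It remains to run the verification in the nontrivial rank-$2$ types. There $W$ is dihedral of order $6$, $8$, or $12$ (types $A_2$, $B_2=C_2$, $G_2$; the case $A_1\times A_1$ being $\P^1\times\P^1$, which is clear) and the left weak Bruhat order is a ``rhombus'': two saturated chains from $e$ to $w_0$, meeting only at their endpoints. The comparable pairs $w<_{\PO}w'$ are therefore just $e$ versus every other element, every element versus $w_0$, and pairs lying on a common chain --- a short explicit list, all of which must be checked, since the wall condition is not inherited from covering relations. One writes out the Steinberg weights in each type from the recipe of \cite{An} and verifies directly, using the root and coweight data, that $\la_{w'}-\la_w+\rho$ pairs to zero with some coroot for every such pair. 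I expect $G_2$ to be the main obstacle: for $A_2$ and $B_2=C_2$ the flag variety $X$ is a projective bundle over a projective space, so a line-bundle exceptional collection already exists by Orlov's theorem \cite{Orlov} and the main new content there is realizing the weak Bruhat order, whereas $G_2/B$ admits no such description, has the largest Weyl group and root system (hence the least freedom in normalizing the Steinberg basis), and is precisely the type for which, as shown in the second part of the paper, no \emph{totally} ordered exceptional collection of line bundles of the expected length exists --- so one must genuinely use the partial order, and the delicate step is confirming that the \cite{An} construction produces enough wall incidences along the weak order. The rest is bookkeeping with fundamental weights and $\rho$.
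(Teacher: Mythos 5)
Your reduction is fine: passing to the simply connected cover, using Borel--Weil--Bott to translate $\PO$-exceptionality into the condition that $\la_{w'}-\la_w+\rho$ is singular for $w<_{\PO}w'$, and noting that any collection indexed by $W$ has the expected length since $K_0(X)\cong\Z^{|W|}$ --- this is exactly the paper's Proposition~\ref{BWB} plus the standard counting. The structure of the weak order on a dihedral group (two chains from $e$ to $w_0$) is also correctly identified. But there is a genuine gap at the heart of the argument: you propose to take the $\la_w$ to be the Steinberg basis in the normalization of \cite{An} and then ``fix the reduced-word data'' so that the required wall incidences hold. In the construction of \cite{An} there is no such freedom --- the weight $\la_w$ is uniquely determined by $w$ (it is the unique minimal element of the region $A_w$) --- and the resulting basis simply fails the singularity condition in types $B_2$ and $G_2$. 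Already in $B_2$ one has $\la_{[2,1]}=-2\omega_1+\omega_2$, and $\la_{[2,1]}+\rho=-\omega_1+2\omega_2$ is regular, so the pair $e<_{\PO}[2,1]$ violates the criterion; no renormalization internal to the Steinberg construction repairs this. Your own remark that ``the delicate step is confirming that the \cite{An} construction produces enough wall incidences'' is precisely the point at which the proposed finite verification would terminate with a negative answer rather than a proof.

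What is missing is a mechanism for \emph{replacing} offending basis weights by new weights (not of Steinberg type) while guaranteeing that the classes $[\LL(\la_w)]$ still generate $K_0(X)$. The paper supplies this as Lemma~\ref{mainproc}: if $\{\lambda_1,\dots,\lambda_k\}$ is a $W$-invariant set (an orbit of a fundamental weight) and $e^{\lambda+\lambda_i}\in\B$ for $i<k$ while $e^{\lambda+\lambda_k}\notin\B$, then one may swap $e^{\lambda+\lambda_1}$ for $e^{\lambda+\lambda_k}$ and retain a basis of $\Z[\Lambda]$ over $\Z[\Lambda]^W$, hence a generating set of $K_0(X)$ after applying the characteristic map. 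With this tool the proof becomes: check the \cite{An} weights directly (they work as given in $A_2$ and $A_1\times A_1$), and in $B_2$ and $G_2$ perform explicit substitutions --- one in $B_2$ (replacing $(-2,1)$ by $(-2,0)$), and a chain of several in $G_2$ passing through auxiliary weights such as $(-4,2)$ and $(-4,1)$ before arriving at the final collection \eqref{wb-g2} --- each justified by the lemma and each restoring the singularity condition along the weak order. Without an argument of this kind your plan establishes the theorem only in the cases ($A_2$, $A_1\times A_1$) where, as you note, it was essentially already known, and leaves open exactly the cases ($B_2$, and especially $G_2$) that the theorem is about.
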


For example, $\Db(X)$ possesses such a $\PO$-exceptional collection
for the variety $X$ of complete flags of a split group of type $G_2$.

\medskip

In \S\ref{summary.sec}, using combinatorial and
geometric arguments we settle the question of the existence of full
exceptional collections of line bundles on projective homogeneous
$G$-varieties for every split semisimple $G$ of rank $\le 2$ over an
arbitrary field. The crux case here is that of type $G_2$, to which
the entirety of Part~\ref{negative} is devoted:

\begin{thmintro} 
\label{neg} 
None of the three non-trivial projective homogeneous varieties of a
simple algebraic group of type $G_2$ have an exceptional collection
of the expected length consisting of line bundles.
\end{thmintro}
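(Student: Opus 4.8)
The plan is to treat the three non-trivial projective homogeneous varieties of $G = G_2$ separately; since $G_2$ is either split or anisotropic and the latter has no proper parabolic subgroups, we may assume $G$ split. Write $B$ for a Borel subgroup, $P_1, P_2 \supset B$ for the two maximal parabolics, $\omega_1, \omega_2$ for the fundamental weights and $\rho = \omega_1 + \omega_2$; the three varieties are the complete flag variety $X = G/B$ (of dimension $6$), the $5$-dimensional quadric $Q \isom G/P_1 \subset \P^6$, and the adjoint variety $X_{\mathrm{ad}} \isom G/P_2 \subset \P^{13}$. Each is cellular, so $K_0$ is free of rank equal to the number of Schubert cells — namely $12$ for $G/B$ and $6$ for both $Q$ and $X_{\mathrm{ad}}$, which are the respective expected lengths — and the Euler pairing $\chi(E,F) = \sum_i (-1)^i \dim \Ext^i(E,F)$ is unimodular (the Schubert basis and the opposite-Schubert basis being dual up to a triangular change, and all of this independent of the characteristic as $G$ is split). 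Hence, if line bundles $\mathcal{L}_1, \dots, \mathcal{L}_N$ form an exceptional collection of the expected length $N$, then the matrix $\bigl(\chi(\mathcal{L}_i, \mathcal{L}_j)\bigr)$ is unitriangular, so of determinant $\pm 1$; with the unimodularity of $\chi$ this forces $[\mathcal{L}_1], \dots, [\mathcal{L}_N]$ to be a $\Z$-basis of $K_0(X)$, and it also gives $\chi\bigl(X, \mathcal{L}_i^\dual \tensor \mathcal{L}_j\bigr) = 0$ for all $i < j$. I use these two consequences throughout.

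For $Q$ and $X_{\mathrm{ad}}$ the basis conclusion already fails (cf.\ Proposition~\ref{propexist}): both are $G/P$ with $P$ maximal, so $\Pic = \Z$ and every line bundle is a power of the ample generator $h$; but in $CH^{\dim X}(X) \isom \Z$ one has $h^{\dim X} = d_X\,[\mathrm{pt}]$ with $d_X$ the degree of $X$ in its minimal embedding, and $d_X = 2$ for the quadric, while $d_X > 1$ for the adjoint variety (a nondegenerate irreducible variety of positive codimension has degree $> 1$). So the powers of $h$ span only a proper subgroup of $CH^{\dim X}(X)$; since $X$ is cellular, $K_0(X)$ surjects onto $CH^\ast(X)$ with the subring generated by line bundles mapping onto the (proper) subring generated by $h$, so $K_0(X)$ is not generated by line bundles, contradicting the basis conclusion.

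The crux case is $X = G/B$, where $K_0(X)$ \emph{is} generated by line bundles (the Steinberg basis \cite{steinberg}), so one must instead use the vanishing of Euler characteristics. Writing $\mathcal{L}_i = \mathcal{L}(\mu_i)$ for weights $\mu_i$, one has $\mathcal{L}_i^\dual \tensor \mathcal{L}_j \isom \mathcal{L}(\mu_j - \mu_i)$, and the Euler characteristic $\chi\bigl(G/B, \mathcal{L}(\nu)\bigr)$ — independent of the characteristic — equals the Weyl dimension polynomial, which for $G_2$, with $\nu = a\omega_1 + b\omega_2$, is
\[
\chi\bigl(G/B, \mathcal{L}(\nu)\bigr) \;=\; \frac{(a+1)(b+1)(a+b+2)(a+2b+3)(a+3b+4)(2a+3b+5)}{120}.
\]
Its zero locus $S$ is the union of the six affine lines $\{a=-1\}$, $\{b=-1\}$, $\{a+b=-2\}$, $\{a+2b=-3\}$, $\{a+3b=-4\}$, $\{2a+3b=-5\}$, which are concurrent — all passing through the point $-\rho$ — and parallel, in turn, to the six positive roots of $G_2$. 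Thus the vanishing $\chi(X, \mathcal{L}_i^\dual \tensor \mathcal{L}_j) = 0$ for $i<j$ says exactly that every forward difference $\mu_j - \mu_i$ lies on one of these six concurrent lines.

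It therefore suffices to prove the purely combinatorial assertion that no sequence $\mu_1, \dots, \mu_{12}$ of weights generating $\Z\omega_1 \oplus \Z\omega_2$ can satisfy $\mu_j - \mu_i \in S$ for all $i<j$. The plan for this is as follows. Translating so that $\mu_1 = 0$ (which changes neither $S$ nor any difference condition) puts $\mu_2, \dots, \mu_{12}$ on the six lines of $S$. Because no line of $S$ is parallel to $\rho$, any two weights lying on a common line $\ell$ of $S$ have difference parallel to the corresponding root, and — being itself forced into $S$ — this difference is one of the at most five explicit vectors in which the line through the origin parallel to $\ell$ meets the remaining five lines of $S$; weights on distinct lines of $S$ are subject to further linear relations of the same kind. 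Combining these constraints with a pigeonhole analysis of how $\mu_2, \dots, \mu_{12}$ distribute over the six lines, with the requirement that the $\mu_i$ generate $\Z^2$, and with the dihedral symmetry of the $G_2$ root system — which lets one normalise to a short list of cases, conveniently indexed by the element of $W(G_2)$ (dihedral of order $12$) whose left weak Bruhat order is a union of two maximal chains — reduces the problem to finitely many configurations, each eliminated by direct inspection. This last combinatorial step is the main obstacle: it is elementary but delicate, and it is the substance of Part~\ref{negative}.
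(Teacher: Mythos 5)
Your reduction of the $G/B$ case to weight combinatorics is essentially the paper's own setup (Proposition~\ref{BWB} together with the translation trick \eqref{transverse}): exceptionality forces every forward difference $\mu_j-\mu_i$ onto the six lines through $-\rho$, and your Euler-characteristic derivation of this via the Weyl dimension polynomial is a fine characteristic-free substitute for Borel--Weil--Bott. Your treatment of the quadric and the adjoint variety is also a legitimately different route: the paper instead lifts a hypothetical collection of expected length through the $\P^1$-bundle $X\to Y$ via Orlov and then invokes the bound of Theorem~\ref{G2}, whereas you show directly that $K_0$ is not generated by line bundles by a degree argument (compare Proposition~\ref{propexist} for the $3$-dimensional quadric); this works, although ``$K_0(X)$ surjects onto $CH^*(X)$'' must be phrased via the associated graded of the topological filtration. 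The genuine gap is the crux case itself. For $G/B$ you assert that pigeonhole, the generation hypothesis, and the dihedral symmetry ``reduce the problem to finitely many configurations, each eliminated by direct inspection,'' but you give no mechanism for this finiteness, and none is cheap: the six lines are infinite, the weights $\mu_2,\dots,\mu_{12}$ can a priori lie arbitrarily far from $-\rho$, and the symmetry group does not act on ordered collections compatibly with the constraints on \emph{all} forward differences. Producing exactly this finiteness is the substance of Part~\ref{negative}: the near/far dichotomy (Proposition~\ref{key}), the trigonometric estimate (Lemma~\ref{trig}), Lemmas~\ref{BAA}, \ref{AAB}, \ref{ABA}, the mirror statement (Proposition~\ref{mirror_farclose}), and the computer-verified Facts~\ref{noDMZ}--\ref{close} combine to give the bound $\le 10$ of Theorem~\ref{G2}. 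Your proposal contains none of this, so the theorem is not proved; the weak-Bruhat-chain remark in your sketch does not correspond to any actual step, and the auxiliary hypothesis you carry (that the $\mu_i$ generate the weight lattice) plays no visible role in closing the argument.

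A secondary error: the opening reduction ``anisotropic, hence no proper parabolics, hence we may assume $G$ split'' is not valid. An anisotropic group of type $G_2$ still has three non-trivial projective homogeneous varieties, namely the (possibly pointless) twisted varieties of parabolic subgroups of each type, and these are precisely the varieties the theorem is about. The paper reduces to the split case by Proposition~\ref{prop:scalar_extensions} combined with Panin's theorem that $K_0(X)\to K_0(\ol{X})$ is an isomorphism for twisted flag varieties of a group of type $G_2$; some such descent step is also required before your Chow/degree argument can be applied, since the twisted forms need not be cellular over $k$.
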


\noindent {\small \textbf{Acknowledgments.} 
We are grateful to Alexander Kuznetsov for comments and for explaining us
some facts concerning exceptional collections.  This research was
partially supported by NSF grant DMS-0903039 (Auel); NSA grant
H98230-11-1-0178 and the Charles T.~Winship Fund
(Garibaldi); and  NSERC Discovery 385795-2010,
Accelerator Supplement 396100-2010, and the Early Researcher Award
grants (Zainoulline).}


\part{Preliminaries and existence results} \label{positive}

\section{Weights and line bundles}

In the present section we recall several basic facts concerning root
systems, weights, associated line bundles, and the Grothendieck group
$K_0$; see \cite{Bou:g7}, \cite{demazure:Schubert}, \cite{FH},
\cite{Panin:twisted}.

\begin{ntt} 
Let $G$ be a split simple simply connected linear algebraic group of
rank $n$ over a field $k$.  We fix a split maximal torus $T$ and a
Borel subgroup $B$ such that $T\subset B\subset G$.

Let $\Lambda$ be the weight lattice
of the root system $\Phi$ of $G$. 
Observe that $\Lambda$ is the group of characters of $T$.
Let $\Pi=\{\alpha_1,\ldots,\alpha_n\}$ be a set of simple roots
and let 
$\{\omega_1,\ldots,\omega_n\}$ be the respective 
set of fundamental weights (a basis of $\Lambda$), 
i.e., $\alpha_i\dual(\omega_j)=\delta_{ij}$.
Let $\Phi^+$ denote the set of all positive roots and let $\Lambda^+$ denote
the cone of dominant weights.
\end{ntt}

\begin{ntt}
Consider the integral group ring $\zz[\Lambda]$; 
its elements are finite linear combinations 
$\sum_i a_ie^{\lambda_i}$, $\lambda_i\in \Lambda$, $a_i\in\zz$. 
Observe that $\zz[\Lambda]$ can be identified with the representation ring of $T$.
Let $X = G/B$ denote the variety of Borel subgroups of $G$, i.e., 
the variety of subgroups conjugate to $B$.

Consider the characteristic map for $K_0$, 
\[
\cc\colon \zz[\Lambda] \to K_0(X)
\]
defined by sending $e^\lambda$ to the class of the associated
homogeneous line bundle $\LL(\lambda)$ over $X$; see
\cite[\S2.8]{demazure:Schubert}.  It is a surjective ring homomorphism
with kernel generated by augmented invariants.  More precisely, if
$\zz[\Lambda]^W$ denotes the subring of $W$-invariant elements and
$\epsilon\colon\zz[\Lambda]\to \zz,\; e^{\lambda}\mapsto 1$ is the
augmentation map, then $\ker \cc$ is generated by elements $x\in
\zz[\Lambda]^W$ such that $\epsilon(x)=0$.  In particular, the Picard
group $\Pic(X)$ coincides with the set of homogeneous line bundles $\{
\LL(\lambda)\}_{\lambda \in \Lambda}$.
\end{ntt}

\begin{ntt} \label{basweights}
The Weyl group $W$ acts linearly on $\Lambda$
via simple reflections $s_i$ as
\[
s_i(\lambda)=\lambda - \alpha_i\dual(\lambda)\alpha_i,\quad \lambda\in\Lambda.
\]
Let $\rho$ denote the half-sum of all positive roots; it is also the
sum of the fundamental weights \cite[VI.1.10,~Prop.~29]{Bou:g4}.

Following \cite{An}, for each $w\in W$ consider the cones $\Lambda^+$
and $w^{-1}\Lambda^+$.  Let $H_\alpha$ denote the hyperplane
orthogonal to a positive root $\alpha\in \Phi^+$.  We say that
$H_\alpha$ separates $\Lambda^+$ and $w^{-1}\Lambda^+$ if
\[
\Lambda^{+}\subset \{\lambda\in \Lambda \mid \alpha\dual(\lambda)\ge 0\}\text{ and } w^{-1}\Lambda^{+}\subset \{\lambda\in \Lambda\,|\,\alpha\dual(\lambda)\le 0\}.
\]
or, equivalently, if $\alpha\dual(w^{-1}\rho)<0$.
Let $H_w$ denote the union of all such hyperplanes, i.e.,
\[
H_w=\bigcup_{\alpha\dual(w^{-1}\rho)<0} H_\alpha.
\]
Consider  the set $A_w= w^{-1}\Lambda^+ \setminus H_w$
consisting of weights $\lambda\in w^{-1}\Lambda^+$ 
separated from $\Lambda^+$ by the same set of hyperplanes as $w^{-1}\rho$. 
By \cite[Lem.~6]{An} there is a unique element $\lambda_w\in A_w$ 
such that for each $\mu \in A_w$ we have $\mu - \lambda_w \in w^{-1}\Lambda^+$.
In fact, the set $A_w$ can be viewed as a cone $w^{-1}\Lambda^+$ shifted 
to the vertex $\lambda_w$.
\end{ntt}

\begin{ex}
In particular, for the identity $1\in W$ we have $\lambda_1=0$. 
Let $w=s_j$ be a simple reflection, then 
\[
w^{-1}\Lambda^+=
\nn_0s_j(\omega_j)\oplus \bigoplus_{i\neq j}\nn_0\omega_i =
\nn_0(\omega_j-\alpha_j)\oplus \bigoplus_{i\neq j}\nn_0\omega_i 
\] 
and
$A_w=(\omega_j-\alpha_j) + w^{-1}\Lambda^+$.
Hence, in this case we have $\lambda_w=\omega_j-\alpha_j$. 

For the longest element $w_0\in W$ we have $\lambda_{w_0}=-\rho$.
\end{ex}

\begin{ntt}
By \cite[Thm.~2]{An}, the integral group ring $\zz[\Lambda]$ is a free
$\zz[\Lambda]^W$-module with the basis $\{e^{\lambda_w}\}_{w\in W}$.
As there is an isomorphism
\[
\zz[\Lambda]\otimes_{\zz[\Lambda]^W}\zz=\zz[\Lambda]/\ker\cc\simeq 
K_0(X),
\]
classes of the associated homogeneous line bundles
$\cc(e^{\lambda_w})=[\LL(\lambda_w)]$ for $w\in W$ generate $K_0(X)$.
\end{ntt}


\section{$\PO$-exceptional collections and scalar extension}

In this section, we assemble some results concerning the interaction
between flat base change and semiorthogonal decompositions in order to
reduce questions concerning exceptional collections on smooth
projective varieties over $k$ to over the algebraic closure $\ol{k}$.
If $X$ is a variety over $k$, write $\ol{X}$ for
the base change  $X \times_k \ol{k}$, and similarly for complexes of sheaves on
$X$.

Let $W$ be an finite set and $\PO$ be a partial order on $W$.

\begin{prop}
\label{prop:scalar_extensions}
Let $X$ be a smooth projective variety over a field $k$ and let
$\{E_w\}_{w \in W}$ be a $\PO$-ordered set of line bundles on
$X$. Then $\{E_w\}$ is a $\PO$-exceptional collection of $\Db(X)$ if
and only if $\{\ol{E}_w\}$ is a $\PO$-exceptional collection of
$\Db(\ol{X})$.  Moreover, $\{E_w\}$ is full if and only if
$\{\ol{E}_w\}$ is full.
\end{prop}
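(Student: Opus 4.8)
The plan is to reduce everything to the vanishing and non-vanishing of $\Ext$-groups and the generation of $\Db(X)$, and to control how these behave under the faithfully flat base change $\ol{X} \to X$. First I would recall that for coherent sheaves (in particular line bundles) $E, F$ on a smooth projective variety $X/k$, there is a base-change isomorphism
\[
\Ext^i_{\ol{X}}(\ol{E}, \ol{F}) \isom \Ext^i_X(E,F) \tensor_k \ol{k}
\]
for all $i$; this follows from flat base change for $\mathrm{Ext}$-sheaves together with flat base change for coherent cohomology on the projective $k$-variety $X$ (or, more slickly, from the fact that $-\tensor_k \ol{k}\colon \Db(X)\to\Db(\ol{X})$ is the derived pullback along a flat morphism and computes $\mathbf{R}\Hom$ correctly because $\ol{k}/k$ is flat and $X$ is proper). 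Consequently $\Ext^*_X(E_w, E_w) = k$ if and only if $\Ext^*_{\ol{X}}(\ol{E}_w, \ol{E}_w) = \ol{k}$ — here one uses that $\Ext^0 \supseteq k$ always, so the condition is really that $\Ext^0$ is one-dimensional and $\Ext^{i}=0$ for $i\neq 0$, and dimension over $k$ is detected after tensoring with $\ol{k}$ — and likewise $\Ext^*_X(E_w,E_{w'})=0$ iff $\Ext^*_{\ol{X}}(\ol{E}_w,\ol{E}_{w'})=0$ for $w <_\PO w'$. This gives the first equivalence.

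For the statement about fullness, I would argue as follows. Let $\sheaf{T} \subseteq \Db(X)$ be the smallest thick (triangulated, closed under direct summands) subcategory containing all the $E_w$. If $\sheaf{T} = \Db(X)$, then pulling back along the flat map $\ol{X}\to X$ sends the generators $E_w$ to $\ol{E}_w$ and sends $\Db(X)$ into the thick subcategory generated by the $\ol{E}_w$; since the pullback of a generator of $\Db(X)$ is again sent into this subcategory and the essential image generates $\Db(\ol{X})$ (because $\Db(X)$ is generated by a single object whose pullback is a generator of $\Db(\ol{X})$, e.g.\ by descent along the finite flat cover, or simply because $\ol{X}$ is covered by the base change and $\mathcal{O}_{\ol{X}}$-modules are obtained from $\mathcal{O}_X$-modules after extension of scalars), we get that $\{\ol{E}_w\}$ is full. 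The converse — fullness downstairs from fullness upstairs — is the step I expect to be the main obstacle, and the natural tool is Galois descent: if $G = \mathrm{Gal}(\ol{k}/k)$ (or a suitable finite quotient through which everything factors, using that $X$ is defined over $k$ and the $E_w$ are line bundles defined over $k$), then $\sheaf{T}^\perp \subseteq \Db(X)$ is a $k$-linear triangulated subcategory, and $\ol{\sheaf{T}^\perp} \subseteq (\ol{\sheaf{T}})^\perp = 0$; since base change is conservative (faithfully flat), $\sheaf{T}^\perp = 0$, and because $X$ is smooth projective, $\Db(X)$ is saturated / has a Serre functor, so $\sheaf{T}^\perp = 0$ forces $\sheaf{T} = \Db(X)$.

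I would organize the write-up in three short steps: (i) state and cite the flat base-change isomorphism for $\mathbf{R}\Hom$/$\Ext^*$; (ii) deduce the equivalence for the $\PO$-exceptional conditions, being slightly careful that "$\Ext^*(E,E)=k$" is a statement about graded dimensions which is stable under the faithfully flat extension $k\to\ol{k}$; (iii) handle fullness by the orthogonality/descent argument above, invoking that $\Db(X)$ for $X$ smooth projective admits a Serre functor (hence is right-and-left saturated), so that a subcategory with trivial right orthogonal is everything. The only genuinely delicate point is making the descent in (iii) precise: one must know either that $\Db(X)$ descends along $\ol{k}/k$ in a way compatible with the generated subcategory, or — cleaner — that the faithfully flat base change functor $\Db(X)\to\Db(\ol{X})$ is conservative on objects, which combined with $(\ol{\sheaf{T}})^\perp=0$ and the saturation of $\Db(X)$ does the job without ever invoking infinite Galois theory. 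The rest of the argument is a formal consequence of flat base change and I would not belabor it.
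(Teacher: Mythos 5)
Your first equivalence (the exceptionality and semiorthogonality conditions) is argued exactly as in the paper, via flat base change for $\Ext^*$, and is fine. The genuine gap is in your treatment of fullness in the direction ``full over $\ol{k}$ implies full over $k$.'' Your argument that $\sheaf{T}^\perp = 0$ (where $\sheaf{T}$ is the subcategory generated by the $E_w$) is correct and in fact more elementary than the paper's route: for $F \in \sheaf{T}^\perp$ one gets $\ol{F} \in \langle \ol{E}_w \rangle^\perp = \Db(\ol{X})^\perp = 0$, hence $F=0$ by faithful flatness. But the final inference ``$\Db(X)$ is saturated / has a Serre functor, so $\sheaf{T}^\perp = 0$ forces $\sheaf{T} = \Db(X)$'' is false as a general principle: saturation of the \emph{ambient} category does not make a subcategory with trivial right orthogonal equal to everything. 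For instance, on an elliptic curve the thick subcategory of complexes with torsion cohomology has zero right orthogonal (by Serre duality every nonzero object maps nontrivially to a shifted skyscraper) yet is proper. What you actually need is (right) admissibility of $\sheaf{T}$ itself, i.e.\ a semiorthogonal decomposition $\Db(X) = \langle \category{E}, \category{E}^\perp\rangle$; this is precisely the nontrivial input the paper supplies at this point (it asserts the decomposition generated by the $\PO$-exceptional collection and then invokes Kuznetsov's results on projection functors and Fourier--Mukai kernels to conclude $\category{J}=0$). Note also that ambient saturation would have to be replaced by saturation/admissibility of $\sheaf{T}$ (Bondal--Kapranov), which is not automatic for a merely $\PO$-exceptional collection, since incomparable members need not be semiorthogonal. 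Without this admissibility step your argument does not close, and this is the heart of the proposition.

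A secondary issue is the other direction, ``full over $k$ implies full over $\ol{k}$.'' Your justification that the essential image of $\Db(X) \to \Db(\ol{X})$ generates is not correct as stated: $\ol{k}/k$ is in general an infinite extension, so there is no ``finite flat cover'' $\ol{X} \to X$, and it is false that every coherent sheaf on $\ol{X}$ is extended from $X$. The standard repair is that any object of $\Db(\ol{X})$ is defined over a finite subextension $k'/k$ and is then (in the separable case) a direct summand of the pullback of its pushforward along the finite flat morphism $X_{k'} \to X$, with extra care (or a dévissage) in the inseparable case; alternatively, cite faithful base change for semiorthogonal decompositions as the paper does (Kuznetsov, Prop.~5.1). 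In short: the Ext-vanishing part matches the paper; the fullness part needs the admissibility/SOD input (or Kuznetsov's base-change machinery) that your saturation shortcut cannot replace.
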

\begin{proof}
For any coherent sheaves $E$ and $F$ on $X$,
we have $\Ext^*(E,F) \tensor_k \ol{k} \isom \Ext^*(\ol{E},\ol{F})$ by
flat base change.  In particular, $E$ is an exceptional object of
$\Db(X)$ if and only if $\ol{E}$ is an exceptional object of
$\Db(\ol{X})$.  Also, for each $w <_{\PO} w'$, we have that
$\Ext^*(\ol{E}_w,\ol{E}_{w'}) = 0$ if and only if
$\Ext^*(E_w,E_{w'})=0$.  Thus $\{E_w\}$ is a $\PO$-exceptional
collection of $\Db(X)$ if and only if $\{\ol{E}_w\}$ is a
$\PO$-exceptional collection of $\Db(\ol{X})$.

Suppose that $\{E_w\}$ is a full $\PO$-exceptional collection of
$\Db(X)$.  As $X$ is smooth, $\Db(X)$ is equivalent to the derived
category $\Dperf(X)$ of perfect complexes on $X$, and similarly for
$\ol{X}$.  As $\{E_w\}$ are line bundles, they are perfect complexes
and $\{ E_w \}$ is a $\PO$-exceptional collection of $\Dperf(X)$.
The main results of \cite{kuznetsov:base_change} imply that
$\{\ol{E}_w\}$ is a full $\PO$-exceptional collection of
$\Dperf(\ol{X})=\Db(\ol{X})$.  Indeed, $\Spec \ol{k} \to
\Spec k$ is faithful and the proof 
in \cite[Prop.~5.1]{kuznetsov:base_change} 
immediately generalizes to the $\PO$-exceptional setting, showing that
$\{\ol{E}_w\}$ is a full $\PO$-exceptional collection.

Now, suppose that $\{\ol{E}_w\}$ is a full $\PO$-exceptional
collection of $\Db(\ol{X})$.  Let $\category{E}$ be the triangulated
subcategory of $\Db(X)$ generated by the $\PO$-exceptional collection
$\{E_w\}$ and $\category{J}$ be the orthogonal complement of
$\category{E}$, i.e., there is a semiorthogonal decomposition $\Db(X)
= \langle \category{E},\category{J} \rangle$.  By
\cite[Prop.~2.5]{kuznetsov:resolutions}, the projection functor
$\Db(X)=\Dperf(X) \to \category{J}$ has finite cohomological
amplitude, hence by \cite[Thm.~7.1]{kuznetsov:base_change}, is
isomorphic to a Fourier--Mukai transform for some object $K$ in
$\Db(X\times_k X)$.  However, $\ol{\category{J}}=0$ by assumption,
hence $\ol{K}=0$.  This implies that $K=0$, otherwise, $K$ would have
a nonzero homology group, which would remain nonzero over $\ol{k}$ by
flat base change.  Thus $\category{J}=0$ and so $\{E_w\}$ generates
$\Db(X)$.
\end{proof}

In this paper, we are concerned with the case where $X$ is a
projective homogeneous variety under a linear algebraic group $G$ that
is split semisimple or has type $G_2$.  Under this hypothesis, the
pull back homomorphism $K_0(X) \to K_0(\ol{X})$ is an isomorphism
\cite{Panin:twisted}.  Together with
Proposition~\ref{prop:scalar_extensions}, this reduces the question of
(non)existence of an exceptional collection of expected length on $X$
to the same question on $\ol{X}$.


\section{$\PO$-exceptional collections on Borel varieties}

Consider the variety $X$ of Borel subgroups of a split semisimple simply
connected linear algebraic group $G$ over a field $k$.  Let
$\LL(\lambda)$ be the homogeneous line bundle over $X$ associated to
the weight $\lambda$.  Recall that $\la$ is \emph{singular} if $\alpha^\vee(\la) = 0$ for some root $\alpha$.

\begin{prop} \label{BWB}
Let $W$ be a finite set endowed with a partial order $\PO$ and let $\{ \la_w \}_{w \in W}$ be a set of weights indexed by $W$.  The statements:
\begin{enumerate}
\item \label{BWB.bundles} $\{ \LL(\la_w) \}_{w \in W}$ is a $\PO$-exceptional collection of line bundles on the Borel variety $X$.
\item \label{BWB.wts} $\la_{w'} - \la_w + \rho$ is a singular weight for every $w {<_{\PO}} w'$.
\end{enumerate}
are equivalent if $\car k = 0$.  If $\car k > 0$, then \eqref{BWB.bundles} implies \eqref{BWB.wts}.
\end{prop}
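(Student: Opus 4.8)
The plan is to translate condition \eqref{BWB.bundles} entirely into the vanishing of sheaf cohomology and then feed it through Borel--Weil--Bott, arranging matters so that the only characteristic-sensitive input is isolated in one place. Since the $\LL(\la_w)$ are line bundles on the smooth variety $X$, for any $w,w'$ one has
\[
\Ext^i(\LL(\la_w),\LL(\la_{w'})) \isom H^i\bigl(X,\LL(\la_{w'}-\la_w)\bigr),
\]
because $\LL(\la_w)\dual \tensor \LL(\la_{w'}) \isom \LL(\la_{w'}-\la_w)$ and $\Ext^i(\OO_X,-)=H^i(X,-)$. Taking $w=w'$ reduces the exceptional-object requirement to $H^*(X,\OO_X)=k$, which holds over any field: $X=G/B$ is connected and projective, giving $H^0=k$, and Kempf's vanishing theorem applied to the dominant weight $0$ gives $H^{>0}(X,\OO_X)=0$. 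Thus every $\LL(\la_w)$ is automatically exceptional, and statement \eqref{BWB.bundles} is equivalent to the vanishing $H^*(X,\LL(\la_{w'}-\la_w))=0$ for all $w<_\PO w'$.

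In characteristic zero I would then invoke Borel--Weil--Bott: for any weight $\mu$, the total cohomology $H^*(X,\LL(\mu))$ vanishes if and only if $\mu+\rho$ is singular (when $\mu+\rho$ is regular the cohomology is a single nonzero module sitting in degree $\ell(w)$, where $w$ is the unique Weyl-group element with $w(\mu+\rho)$ dominant). One may pass to $\ol{k}$ by flat base change, under which cohomology vanishing is insensitive, so that the theorem applies verbatim. Taking $\mu=\la_{w'}-\la_w$ converts the vanishing criterion of the first paragraph into exactly condition \eqref{BWB.wts}, yielding the full equivalence \eqref{BWB.bundles}$\iff$\eqref{BWB.wts} when $\car k=0$.

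For $\car k>0$ the concentration half of Borel--Weil--Bott is unavailable, so I would replace it by the Euler characteristic $\chi(X,\LL(\mu))=\sum_i(-1)^i\dim_k H^i(X,\LL(\mu))$. The flag variety and $\LL(\mu)$ are defined over $\zz$, and properness together with flatness makes $\chi$ locally constant along $\Spec\zz$, hence equal to its characteristic-zero value. By the Weyl dimension formula that value is $(-1)^{\ell(w)}\dim V$ with $V$ the irreducible of highest weight $w(\mu+\rho)-\rho$ when $\mu+\rho$ is regular, and is $0$ when $\mu+\rho$ is singular; so in every characteristic $\chi(X,\LL(\mu))=0$ if and only if $\mu+\rho$ is singular. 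Now if \eqref{BWB.bundles} holds then $H^*(X,\LL(\la_{w'}-\la_w))=0$ for $w<_\PO w'$, whence a fortiori $\chi(X,\LL(\la_{w'}-\la_w))=0$, forcing $\la_{w'}-\la_w+\rho$ to be singular, which is \eqref{BWB.wts}. The main obstacle, and the reason only this one implication is claimed in positive characteristic, is precisely that the Euler characteristic sees only the alternating sum: in characteristic $p$ a singular $\mu+\rho$ can have individual $H^i(X,\LL(\mu))\neq0$ that cancel in $\chi$, so the chain $H^*=0\Rightarrow\chi=0\Rightarrow$ singular survives while its converse does not.
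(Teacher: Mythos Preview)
Your argument is correct. The characteristic-zero portion matches the paper exactly: the same reduction of $\Ext^*$ to $H^*(X,\LL(\la_{w'}-\la_w))$, the same appeal to Kempf's vanishing for the exceptional-object condition, and the same invocation of Borel--Weil--Bott for the equivalence.

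In positive characteristic you take a genuinely different route. The paper argues by \emph{semicontinuity}: the flag variety and its line bundles spread out over $\Spec\zz$, and upper semicontinuity of $\dim H^i$ in a flat family shows that vanishing of $H^i$ over $\F_p$ forces vanishing over $\Q$, whence one concludes by the characteristic-zero case already established. You instead use the \emph{constancy of the Euler characteristic} in that same flat family, combined with the Weyl dimension formula, to see directly that $\chi(X,\LL(\mu))\ne 0$ precisely when $\mu+\rho$ is regular, independent of the characteristic; then $H^*=0$ forces $\chi=0$ forces singularity. Both arguments rely on the same integral model, but yours avoids semicontinuity in favour of the more elementary invariance of $\chi$, and has the small bonus of making transparent why the reverse implication can fail in characteristic $p$: cancellation in the alternating sum. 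The paper's approach, on the other hand, more directly reduces everything to the characteristic-zero statement without needing to compute $\chi$ explicitly.
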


\begin{proof}
Since $X$ is smooth, proper, and irreducible, we have
$\Hom(\LL,\LL)=k$ for any line bundle $\LL$ over $X$.
Furthermore, for any weights $\la, \la'$, we have
\begin{equation} \label{BWB.1}
\Ext^i(\LL(\lambda),\LL(\lambda'))=H^i(X,\LL(\lambda)\dual\otimes \LL(\lambda'))=H^i(X,\LL(\lambda'-\lambda)).
\end{equation}
In particular, since $0$ is a dominant weight,
Kempf's vanishing theorem \cite[Prop.~II.4.5]{jantzen} implies that $\LL(\la)$ is an exceptional
object for every weight $\la$. 

Equation \eqref{BWB.1} says that \eqref{BWB.bundles} is equivalent to:
$H^*(X, \LL(\la_{w'} - \la_w)) = 0$ for all $w {<_{\PO}} w'$.  If
$\car k = 0$, this is equivalent to \eqref{BWB.wts} by the
Borel-Weil-Bott Theorem as in \cite[p.392]{FH} or
\cite[II.5.5]{jantzen}.  If $\car k = p > 0$, then $X$ and every line
bundle $\LL(\mu)$ is defined over the field $\F_p$ and can be lifted
to a line bundle over $\Q$, via a smooth projective model $\mathcal{X}
\to \Spec \Z$ defined in terms of the corresponding Chevalley group
schemes.  Semicontinuity of cohomology shows that vanishing of
$H^i(\mathcal{X} \times_{\Z} {\F_p}, \LL(\la_{w'} - \la_w))$ implies
the vanishing of the analogous cohomology group over $\Q$, which
implies \eqref{BWB.wts} by the characteristic zero case.
\end{proof}

\begin{rem}
The reverse implication statement of Proposition~\ref{BWB} in $\car k >0$
does not hold in general.  Indeed, take $G = \SL_3$ over a field $k$
of prime characteristic $p$.  Write $\omega_1, \omega_2$ for the
fundamental dominant weights and put $\mu = -(p+2)\omega_1 +
p\omega_2$.  For the partially ordered set of line bundles $\LL(0) <
\LL(\mu)$, \eqref{BWB.wts} obviously holds, but \eqref{BWB.bundles}
fails because both $H^1(\mu)$ and $H^2(\mu)$ are nonzero by
\cite[Cor.~5.1]{Griffith}.
\end{rem}

\begin{dfn}
A collection of weights $\{\lambda_w\}_{w\in W}$ is called
\linedef{$\PO$-exceptional} (resp.\ of the \linedef{expected length})
if the corresponding collection of line bundles $\{\LL(\lambda_w)\}_{w\in
W}$ is thus. 
\end{dfn}

The proof of Theorem~\ref{posAB} consists of two steps.  First, we
find a maximal $\PO$-exceptional subcollection of weights among the
weights $\lambda_w$ constructed in \S\ref{basweights}. This is done by
direct computations using Proposition~\ref{BWB}\eqref{BWB.wts}.  Then
we modify the remaining weights to fit in the collection, i.e., to
satisfy Proposition~\ref{BWB}\eqref{BWB.wts} and to remain a basis.
This last point is guaranteed, since we modify the weights according
to the following fact.

\begin{lem} \label{mainproc}
Let $\B$ be a basis of $\zz[\Lambda]$ over $\zz[\Lambda]^W$ and let
$e^\lambda\in \B$ be such that for some $W$-invariant set
$\{\lambda_1, \lambda_2,...,\lambda_k\}$ we have
$e^{\lambda+\lambda_i} \in \B$ for all $i<k$ and
$e^{\lambda+\lambda_k}\not\in \B$.
Then the set
\[
\bigl(\B\cup\{e^{\lambda+\lambda_k}\}\bigr)\smallsetminus\{e^{\lambda+\lambda_1}\}
\]
is also a basis of $\zz[\Lambda]$ over $\zz[\Lambda]^W$.
\end{lem}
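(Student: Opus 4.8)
The plan is to exploit the fact that $\B$ is a $\zz[\Lambda]^W$-basis, so passing to the quotient $K_0(X) = \zz[\Lambda]\otimes_{\zz[\Lambda]^W}\zz$ it becomes a $\zz$-basis of a free abelian group of rank $|W|$. I would first reduce the claim to showing that in the new set, the vector $e^{\lambda+\lambda_k}$ can be expressed in terms of $e^{\lambda+\lambda_1}$ and the other basis elements with $e^{\lambda+\lambda_1}$ appearing with coefficient $\pm1$; this is the standard exchange criterion for bases of free modules. Concretely, write $z = \sum_i e^{\lambda_i} \in \zz[\Lambda]^W$ (using that $\{\lambda_1,\dots,\lambda_k\}$ is a $W$-invariant set, so this sum is genuinely $W$-invariant). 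Then $e^{\lambda} z = \sum_{i=1}^{k} e^{\lambda+\lambda_i}$, and since $z \in \zz[\Lambda]^W$ this is a $\zz[\Lambda]^W$-linear combination of $\B$-elements; in fact $e^{\lambda}$ itself is in $\B$ (it's the given element), so multiplying the basis element $e^\lambda$ by the scalar $z$ expresses $e^{\lambda} z$ in terms of $\B$.

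The key computation is then: $e^{\lambda+\lambda_k} = e^\lambda z - \sum_{i=1}^{k-1} e^{\lambda+\lambda_i}$. Here $e^\lambda \in \B$ and, by hypothesis, $e^{\lambda+\lambda_i}\in\B$ for all $i < k$. So $e^{\lambda+\lambda_k}$ lies in the $\zz[\Lambda]^W$-span of $\B \smallsetminus \{e^{\lambda+\lambda_1}\}$ together with $e^{\lambda+\lambda_1}$, and the coefficient of $e^{\lambda+\lambda_1}$ in this expression is $-1$, a unit in $\zz[\Lambda]^W$. (One must also check $e^\lambda \neq e^{\lambda+\lambda_i}$ for the indices involved, i.e.\ that these are genuinely distinct elements of $\B$; this follows because $\lambda_i \neq 0$ for the relevant $i$ — if some $\lambda_i = 0$ the hypotheses would force $e^{\lambda+\lambda_k}\in\B$ already, and anyway one can rescale $z$ or argue directly. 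I would state this carefully.) It follows that $\B' := (\B \cup \{e^{\lambda+\lambda_k}\}) \smallsetminus \{e^{\lambda+\lambda_1}\}$ and $\B$ generate the same $\zz[\Lambda]^W$-submodule, namely all of $\zz[\Lambda]$.

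Finally, to conclude $\B'$ is a basis and not merely a generating set, I would invoke that $\zz[\Lambda]$ is free of rank $|W|$ over $\zz[\Lambda]^W$ and $|\B'| = |\B| = |W|$: a generating set of size equal to the rank of a finitely generated free module over a commutative ring is automatically a basis (e.g.\ by tensoring to the fraction field of $\zz[\Lambda]^W$, or by the standard rank argument). Alternatively, and perhaps more cleanly, I would directly verify the inverse exchange: symmetrically express $e^{\lambda+\lambda_1}$ in terms of $\B'$ using the same identity solved for $e^{\lambda+\lambda_1}$, which shows $\B$ and $\B'$ have the same span in both directions, and then linear independence of $\B'$ follows from that of $\B$. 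The main obstacle — really the only subtle point — is the bookkeeping around whether the listed shifts $\lambda_i$ are distinct and nonzero, so that the monomials $e^{\lambda+\lambda_i}$ are actually distinct basis vectors and the exchange is nondegenerate; everything else is the routine exchange lemma for free modules.
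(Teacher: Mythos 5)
Your proposal is correct and follows essentially the same route as the paper: the paper's entire proof is exactly your key identity $e^{\lambda+\lambda_k}=(\sum_i e^{\lambda_i})e^\lambda-\sum_{i<k}e^{\lambda+\lambda_i}$, observing that the coefficients lie in $\zz[\Lambda]^W$ and that the coefficient of $e^{\lambda+\lambda_1}$ is the unit $-1$, so the exchange preserves the property of being a basis. Your extra bookkeeping (distinctness of the $e^{\lambda+\lambda_i}$ and the rank argument for concluding ``basis'' from ``generating set of the right size'') is harmless elaboration of the same argument, which the paper leaves implicit.
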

\begin{proof}
Indeed, there is a decomposition
$$
e^{\lambda+\lambda_k}=(e^{\lambda_1}+e^{\lambda_2}+\dots+e^{\lambda_k})e^\lambda-e^{\lambda+\lambda_1}-e^{\lambda+\lambda_2}-\dots-e^{\lambda+\lambda_{k-1}}
$$
with the coefficients from $\zz[\Lambda]^W$ and an invertible coefficient at $e^{\lambda+\lambda_1}$. 
\end{proof}

We can give a geometric description of this fact.  For instance, in
type $B_2$, the rule says that if we have a square (shifted orbit of a
fundamental weight) where the center and three vertices are the basis
weights, then replacing one of these basis weights by the missing
vertex gives a basis; see Figure~\ref{B2.fig}.  For $G_2$ we
use a hexagon (shifted orbit of a fundamental weight) instead of the
square, where the center and all but one vertex are the basis weights.
\begin{figure}[hbt]
\begin{center}
\begin{picture}(350,220)
	\put(10,140){\line(1,0){140}}
	\put(80,10){\line(0,1){200}}
    \put(10,70){\line(1,1){140}}	
	\put(150,70){\line(-1,1){140}}
	{\thicklines 
	\put(80,140){\vector(0,1){60}}
	\put(80,140){\vector(1,1){30}}
	\put(115,167){$\omega_1$}		
	\put(83,203){$\omega_2$}			
	}
	\multiput(20,10)(0,15){14}{\line(0,1){7}}
	\multiput(140,10)(0,15){14}{\line(0,1){6}}
	\multiput(10,80)(15,0){10}{\line(1,0){6}}		
	\multiput(10,200)(15,0){10}{\line(1,0){6}}		
	\multiput(10,20)(15,0){10}{\line(1,0){6}}		
	\multiput(10,10)(14,14){10}{\line(1,1){10}}		
	\multiput(10,130)(14,14){5}{\line(1,1){10}}
	\multiput(70,10)(14,14){6}{\line(1,1){10}}							
	\multiput(150,10)(-14,14){10}{\line(-1,1){10}}								
	\multiput(150,130)(-14,14){6}{\line(-1,1){10}}									
	\multiput(90,10)(-14,14){6}{\line(-1,1){10}}									
	\put(80,140){\circle*{6}}
	\put(80,80){\circle*{6}}
	\put(140,140){\circle*{6}}
	\put(110,110){\circle*{6}}
	\put(50,50){\circle*{6}}
	\put(50,110){\circle*{6}}
	\put(50,170){\circle*{6}}
	\put(20,140){\circle*{6}}				

    \put(50,110){\oval(80,80)}

	\put(160,110){\vector(1,0){30}}

	\put(200,140){\line(1,0){140}}
	\put(270,10){\line(0,1){200}}
    \put(200,70){\line(1,1){140}}	
	\put(340,70){\line(-1,1){140}}
	\multiput(210,10)(0,15){14}{\line(0,1){7}}
	\multiput(330,10)(0,15){14}{\line(0,1){6}}
	\multiput(200,80)(15,0){10}{\line(1,0){6}}		
	\multiput(200,200)(15,0){10}{\line(1,0){6}}		
	\multiput(200,20)(15,0){10}{\line(1,0){6}}		
	\multiput(200,10)(14,14){10}{\line(1,1){10}}		
	\multiput(200,130)(14,14){5}{\line(1,1){10}}
	\multiput(260,10)(14,14){6}{\line(1,1){10}}							
	\multiput(340,10)(-14,14){10}{\line(-1,1){10}}								
	\multiput(340,130)(-14,14){6}{\line(-1,1){10}}									
	\multiput(280,10)(-14,14){6}{\line(-1,1){10}}	
	
	{\thicklines 
	\put(270,140){\vector(0,1){60}}
	\put(270,140){\vector(1,1){30}}
	\put(305,167){$\omega_1$}		
	\put(273,203){$\omega_2$}			
	}
								
    \put(240,110){\oval(80,80)}
	
	\put(270,140){\circle*{6}}
	\put(270,80){\circle*{6}}
	\put(330,140){\circle*{6}}
	\put(300,110){\circle*{6}}
	\put(240,50){\circle*{6}}
	\put(240,110){\circle*{6}}
	\put(240,170){\circle*{6}}
	\put(210,80){\circle*{6}}				
\end{picture}
\caption{Example of the substitution described in \S\ref{mainproc} in
the $B_2$ weight lattice.  The thick points represent the basis, and
the solid lines are the walls of Weyl chambers.}
\label{B2.fig}
\end{center}
\end{figure}
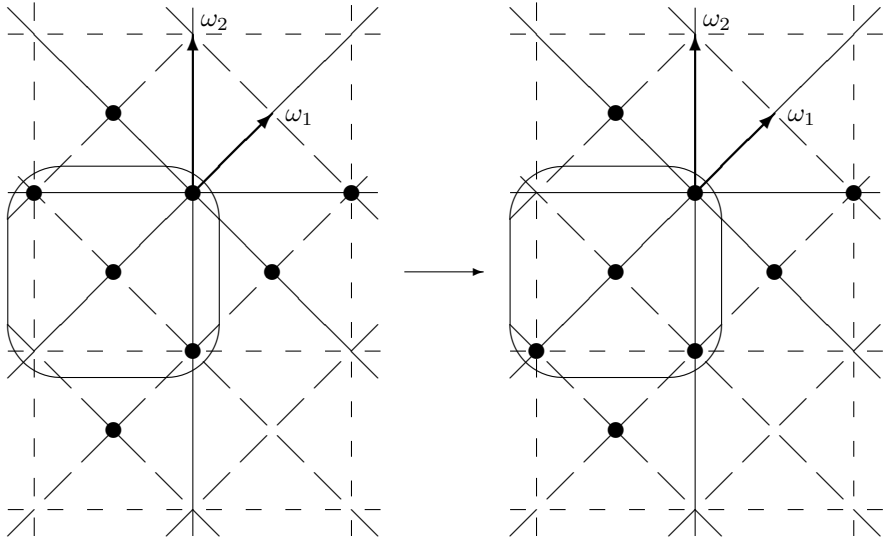

\medskip

\begin{proof}[Proof of Theorem \ref{posAB}]
We use the following notation:
a product of simple reflections 
$w=s_{i_1}s_{i_2}\cdots  s_{i_k}$ is denoted
by $[i_1,i_2,\ldots,i_k]$; the identity is denoted by $[]$.
Given a presentation $\lambda=a_1\omega_1 + \cdots +a_n\omega_n$ in 
terms of fundamental weights we denote $\lambda$ by $(a_1,\ldots,a_n)$.
We write $\PO$ for the left weak Bruhat order on the Weyl group $W$.

\medskip
\paragraph{\bf Type $A_2$:}
The Weyl group $W$ of type $A_2$ consists of the following elements:
\[
W=\{[], [1], [2], [2, 1], [1, 2], [1, 2, 1]\}.
\]
The respective basis weights $\{\lambda_w\}_{w\in W}$ of
\S\ref{basweights} are given by (here the $i$-th weight is indexed by
the $i$-th element of the Weyl group):
\begin{equation}\label{wb-a2}
\{(0,0),(-1,1),(1,-1),(-1,0),(0,-1),(-1,-1)\}.
\end{equation}
Direct computations using Proposition~\ref{BWB}\eqref{BWB.wts} show that
$\{\lambda_w\}_{w\in W}$ is a $\PO$-exceptional collection.

Using Lemma~\ref{mainproc} we can modify the weights $\{\lambda_w\}$
to obtain the following exceptional collection of weights (there are
no $\Ext$'s from left to the right):
\begin{equation}\label{tot-a2}
\{(0,0),(-1,0), (-2,0),(1,-1),(0,-1),(-1,-1)\}.
\end{equation}

\medskip
\paragraph{\bf Type $A_1 \times A_1$:}
The Weyl group of type $A_1 \times A_1$ is the Klein four-group; the root system has orthogonal simple roots $\alpha_1, \alpha_2$ which may be taken to have square-length 2 and equal the fundamental weights.  The procedure from \S\ref{basweights} gives the list of basis weights $0, -\alpha_1, -\alpha_2, -\alpha_1 - \alpha_2$, which is an exceptional collection.

\medskip
\paragraph{\bf Type $B_2$ or $C_2$:}
The Weyl group $W$ consists of the following elements:
\[
W=\{[], [1], [2], [2, 1], [1, 2], [1, 2, 1], [2, 1, 2], [1, 2, 1, 2]\}.
\]
The respective basis weights $\{\lambda_w\}_{w\in W}$ of
\S\ref{basweights} are given by:
\[
\{(0,0),(-1,1),(2,-1),(-2,1),(1,-1),(-1,0),(0,-1),(-1,-1)\}
\]
(here $\omega_1=(e_1+e_2)/2$ and $\omega_2=e_2$).  Direct computations
show that $\{\lambda_w\}_{w\in W}$ is a $\PO$-exceptional collection
except of the weight $\lambda_{[2,1]}=(-2,1)$: indeed, the property in
Proposition~\ref{BWB}\eqref{BWB.wts} fails only for the weights $0$ and
$(-2,1)$, i.e., $\lambda_{[2,1]}+\rho$ is regular or, equivalently,
$[1]*\lambda_{[2,1]}=0$ is dominant.

We modify the basis weights using Lemma~\ref{mainproc}: An element
$e^{(-2,0)}$ has the following representation with respect to the
initial basis:
$$
e^{(-2,0)}=(e^{(1,0)}+e^{(1,-1)}+e^{(-1,1)}+e^{(-1,0)})e^{(-1,0)}-e^{(0,0)}-e^{(-2,1)}-e^{(0,-1)}
$$
where $(e^{(1,0)}+e^{(1,-1)}+e^{(-1,1)}+e^{(-1,0)})\in \zz[\Lambda]^{W}$.
Hence, we can substitute $e^{(-2,1)}$ by $e^{(-2,0)}$. 
Figure \ref{B2.fig} illustrates our arguments.
Finally, after reindexing  we obtain a $\PO$-exceptional collection
\begin{equation}\label{wb-b2}
\{(0,0),(-1,1),(2,-1),(-1,0),(1,-1),(-2,0),(0,-1),(-1,-1)\}.
\end{equation}

\smallskip

Repeating Lemma~\ref{mainproc} we obtain the following exceptional
collection of weights
\begin{equation}\label{tot-b2}
\{(1,0), (0,0), (-1,0), (-2,0), (2,-1), (1,-1), (0,-1), (-1,-1)\}.
\end{equation}

\paragraph{\bf Type $G_2$:}
The Weyl group $W$ of type $G_2$ consists of the following 12 elements:
\begin{align*}
W=\{&[],[1],[2],[2,1],[1,2],[1,2,1],[2,1,2],[2,1,2,1],[1,2,1,2],\\
&[1,2,1,2,1],[2,1,2,1,2],[1,2,1,2,1,2]\}.
\end{align*}
The respective basis weights $\{\lambda_w\}_{w\in W}$ of
\S\ref{basweights} are given by:
\begin{gather*}
\{(0,0),(-1,1),(3,-1),(-3,2),(2,-1),(-2,1),(3,-2),\\
(-3,1),(1,-1),(-1,0),(0,-1),(-1,-1)\}.
\end{gather*}
Using Lemma~\ref{mainproc} we obtain the following $\PO$-exceptional
collection
\begin{gather}
\{(0,0),(-1,1),(3,-1),(-3,1),(2,-1),(-1,0),(1,-1), \label{wb-g2} \\
(-2,0),(0,-1),(-3,0),(2,-2),(-1,-1)\}. \notag
\end{gather}
Indeed, the difference between the initial basis and the modified one consists in the substitution of 
$e^{(-3,2)}, e^{(-2,1)}, e^{(3,-2)}$ by $e^{(-3,0)},e^{(-2,0)},e^{(2,-2)}$. We proceed in several steps using the same reasoning as in the $B_2$-case. Denote
\begin{align*}
A &=e^{(1,0)}+e^{(2,-1)}+e^{(1,-1)}+e^{(-1,0)}+e^{(-2,1)}+e^{(-1,1)},\\
B &=e^{(0,1)}+e^{(3,-1)}+e^{(3,-2)}+e^{(0,-1)}+e^{(-3,1)}+e^{(-3,2)},
\end{align*}
the sums of the elements corresponding to the orbits of the fundamental weights. Note that $A,B\in \zz[\Lambda]^W$.
We have the following decompositions with respect to the initial basis (coefficients belong to $\zz[\Lambda]^W$):
\begin{align*}
e^{(2,-2)}&=Ae^{(1,-1)}-e^{(0,0)}-e^{(2,-1)}-e^{(3,-2)}-e^{(0,-1)}-e^{(-1,0)}, \\
e^{(-4,2)}&=Ae^{(-2,1)}-e^{(0,0)}-e^{(-1,0)}-e^{(-3,1)}-e^{(-3,2)}-e^{(-1,1)}.
\end{align*}
Hence we can substitute $e^{(2,-2)}$ for $e^{(3,-2)}$ and $e^{(-4,2)}$ for $e^{(-3,2)}$. Using the decomposition
\[
e^{(-2,0)}=Ae^{(-1,0)}-e^{(0,0)}-e^{(1,-1)}-e^{(-1,0)}-e^{(-3,1)}-e^{(-2,1)}
\]
we substitute $e^{(-2,0)}$ for $e^{(-2,1)}$. Then, using
\[
e^{(-4,1)}=Be^{(-1,0)}-e^{(-4,2)}-e^{(-1,1)}-e^{(2,-1)}-e^{(2,-2)}-e^{(-1,-1)}
\]
substitute $e^{(-4,1)}$ for $e^{(-4,2)}$. At last, using
\[
e^{(-3,0)}=Ae^{(-2,0)}-e^{(-1,0)}-e^{(0,-1)}-e^{(-1,-1)}-e^{(-4,1)}-e^{(-3,1)}
\]
we substitute $e^{(-3,0)}$ for $e^{(-4,1)}$, obtaining the required basis.
\end{proof}

\section{Full exceptional collections of line bundles in
rank $\leq 2$} 
\label{summary.sec}

We now provide the answer to the question
\begin{equation} \label{ques}
\parbox{4in}{
Does $X$ have a full exceptional collection of line bundles?}
\end{equation}
for each projective homogeneous $G$-variety $X$, where $G$ is split
semisimple of rank $\le 2$.  In type $B_2=C_2$, we assume that the
base field has characteristic $\neq 2$.

\subsection*{Types $A_1$ and $A_2$} In case $G$ has type $A$, it is
isogenous to $\SL_2$ or $\SL_3$.  In all but one case, $X$ is
projective space $\P^1$ or $\P^2$ and the answer to \eqref{ques} is
``yes" by Be{\u\i}linson \cite{beilinson1} and
Bern{\v{s}}te{\u\i}n--Gelfand--Gelfand \cite{BGG} (which holds over
any field, cf.\ \cite[\S11]{Walter}).  In the remaining case, $G$ is
isogenous to $\SL_3$ and $X$ is the variety of Borel subgroups of $G$;
then $X$ is a $\P^1$-bundle over $\P^2$ and the answer is ``yes" by
\cite[Cor.~2.7]{Orlov} (which also holds over any field).

\subsection*{Type $A_1 \times A_1$} In this case, $G$ is isogenous to
$\SL_2 \times \SL_2$, and $X$ is either $\P^1$ or $\P^1 \times \P^1$.
In both cases, the answer to \eqref{ques} is ``yes'' since we know the
answers for projective space and products of projective spaces, as in
the previous paragraph.

\subsection*{Type $G_2$} For $G$ of type $G_2$, the answer is always
``no'' by Theorem \ref{neg}, which holds over any field.

\subsection*{Type $B_2 = C_2$} In this case, $G$ is isogenous to
$\SO_3$ and to $\Sp_4$.  Two of the possibilities for $X$ are $\P^3$
(for which the answer is ``yes" as in the type $A_2$ case) and the
Borel variety $X$ (which is a $\P^1$-bundle over $\P^3$, so that the
answer is again ``yes" as in the type $A_1$ and $A_2$ case).  The only
remaining possibility for $X$ is a 3-dimensional quadric, for which
the answer is ``no" by the following:

\begin{prop}\label{propexist}
Let $X$ be a smooth 3-dimensional quadric defined over a field of
characteristic $\neq 2$.  Then $K_0(X)$ is not generated by line
bundles.  In particular, $X$ does not have a $\PO$-exceptional
collection of the expected length consisting of line bundles for any
partially ordered set.
\end{prop}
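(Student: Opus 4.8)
The plan is to compute $K_0(X)$ explicitly for a smooth 3-dimensional quadric $X$ over a field $k$ of characteristic $\neq 2$ and show the subgroup generated by the Picard group is proper. By Proposition~\ref{prop:scalar_extensions} (or rather its $K_0$ companion statement in the text: the pull-back $K_0(X) \to K_0(\ol X)$ is an isomorphism for $X$ projective homogeneous, via \cite{Panin:twisted}), and since $\Pic(X) \to \Pic(\ol X)$ is also an isomorphism, it suffices to work over $\ol k$, i.e., with the split quadric $Q^3 \subset \P^4$. So first I would reduce to the split case.

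Next, I would recall the structure of $K_0(Q^3)$. For an odd-dimensional smooth quadric $Q^{2m+1}$, the Chow ring (hence, by the cellular structure, $K_0$ as an abelian group) is free of rank $2m+2$; for $m=1$ this gives rank $4$. A basis of $K_0(Q^3)$ is given by $[\OO]$, $[\OO(1)]$, together with the classes of a line $[\OO_\ell]$ and a point $[\OO_{pt}]$ (the classes coming from the spinor bundle, or equivalently from Be{\u\i}linson-type generators). The key arithmetic point is that the hyperplane section $h = 1 - [\OO(-1)]$ and its powers do \emph{not} span: while $h$ and $h^2$ reach the codimension-1 and codimension-2 parts, the codimension-2 cycle obtained from $h^2$ is \emph{twice} the class of a line (a plane section of $Q^3$ is a conic, which is $2\ell$ in the Chow group), so $[\OO_\ell]$ is not in the subgroup generated by line bundles. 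I would make this precise by passing to the associated graded of the topological/codimension filtration on $K_0$, where $\mathrm{gr}\, K_0(Q^3) \isom \mathrm{CH}^*(Q^3)$ and the image of a line bundle $\OO(d)$ in degree $i$ is $d^i$ times the positive generator of $\mathrm{CH}^i$, except that $\mathrm{CH}^2(Q^3) \isom \zz$ is generated by the class of a line $\ell$ while the square of the hyperplane class is $2\ell$. Hence any $\zz$-linear combination of line bundles lands in $2\zz \cdot \ell$ inside $\mathrm{CH}^2$, so line bundles cannot generate.

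Then I would assemble: since $\Pic(X)$ maps onto $\Pic(\ol X) = \zz \cdot [\OO(1)]$, the subgroup of $K_0(X)$ generated by line bundles maps, under the isomorphism $K_0(X) \isom K_0(\ol X)$, into the subgroup of $K_0(Q^3)$ generated by the $[\OO(d)]$, which by the previous paragraph is proper (it misses $[\OO_\ell]$, or more robustly, it has index $\geq 2$ in $\mathrm{gr}^2$). Therefore $K_0(X)$ is not generated by line bundles. The final sentence of the proposition is then immediate: a $\PO$-exceptional collection of line bundles of the expected length would, by definition, have cardinality equal to the minimal number of generators of $K_0(X)$ and its members would generate $K_0(X)$ (being a full such collection of vector bundles is automatic here since the expected length forces generation --- more carefully, one notes that an exceptional collection which generates $\Db(X)$ certainly generates $K_0(X)$, and of expected length it has exactly the minimal number of generators, but the point is just that its classes generate $K_0(X)$), contradicting what we just proved.

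The main obstacle I anticipate is being careful about the definition of ``expected length'' versus ``full,'' and about exactly which statement one needs: we do not need to rule out full collections directly, only to observe that if line bundles do not generate $K_0(X)$ then a fortiori there is no collection of line bundles whose classes form a generating set of the expected (= minimal) size. The genuinely mathematical input is the computation that the codimension-2 part of $\mathrm{CH}^*(Q^3)$ is generated by a line and that $h^2 = 2\ell$; this is standard (it is the reason odd quadrics have a non-trivial even Clifford/spinor class), but it must be invoked correctly and attributed, e.g. to the description of $K_0$ of quadrics due to Swan or via the cellular decomposition. Everything else --- the descent to $\ol k$, the comparison of Picard groups --- is formal given the results already recorded in the excerpt.
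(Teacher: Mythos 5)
Your overall strategy is genuinely different from the paper's: the paper never touches Chow groups, but instead identifies $K_0(X)$ with $\Z\otimes_{\Z[\Lambda]^W}\Z[\Lambda]^{W_P}$ and composes with the surjection onto $K_0(\Gb)\isom\Z[y]/(y^2-2y,4y)$ (Zainoulline's computation for the adjoint group), under which every line bundle maps to $1$ while the rank-two bundle attached to the $W_P$-orbit of $\omega_2$ maps to $2-2y$. Your Chow-theoretic route can be made to work, but as written its key step is not justified. A single line bundle $\OO(d)$ has no class in $\mathrm{gr}^2$ of the topological filtration (it lies in $F^0\smallsetminus F^1$), its second Chern class vanishes, and its degree-two Chern character is $d^2h^2/2=d^2\ell$, an \emph{odd} multiple of $\ell$ when $d$ is odd. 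So the assertion that ``the image of $\OO(d)$ in degree $i$ is $d^i$ times the positive generator, hence any $\Z$-linear combination of line bundles lands in $2\Z\,\ell$'' is a non sequitur under any reading: the evenness you need is not visible bundle-by-bundle. The repair is standard but it is the actual content of the step: for $x=\sum_d a_d[\OO(d)]$ in the subgroup $L$ generated by line bundles, the Whitney formula gives $c_2(x)=\tfrac{1}{2}\bigl((\sum a_dd)^2-\sum a_dd^2\bigr)h^2=\bigl((\sum a_dd)^2-\sum a_dd^2\bigr)\ell$, which is an even multiple of $\ell$ because $d^2\equiv d\pmod 2$; on the other hand $c_2([\OO_\ell])=-\ell$ by Riemann--Roch without denominators (equivalently: any element of $L\cap F^2$ has $\mathrm{gr}^2$-class in $2\Z\,\ell$, while $[\OO_\ell]\in F^2$ has class $\ell$). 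Either formulation yields $L\neq K_0$. Note it is this parity congruence together with $h^2=2\ell$, not $h^2=2\ell$ alone, that drives the argument: on $\P^3$ the identical computation lets line bundles fill all of $\mathrm{gr}^2$, as they must.

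Two further remarks. Your reduction to $\ol{k}$ invokes the isomorphism $K_0(X)\isom K_0(\ol{X})$, which holds (via Panin) when $X$ is homogeneous under a split group, i.e., when the quadric is split; for a non-split form the base-change map is not surjective, so your reduction only covers the split case. This is the case needed in \S\ref{summary.sec}, and the paper's own proof makes the same implicit assumption by writing $X\isom G/P$ for split $G$, so this is a caveat rather than a defect relative to the paper. Your handling of the ``in particular'' clause (the classes of an expected-length collection must generate $K_0(X)$) matches the paper's intent and is no less rigorous than its own ``for obvious reasons.'' Once the $c_2$/filtration step is argued correctly, your proof is a legitimate, more geometric alternative: it detects an index-$2$ obstruction in $\mathrm{gr}^2$, whereas the paper's representation-theoretic map to $K_0(\Gb)$ detects the obstruction inside a $\Z/4$ quotient, with neither argument needing the other's machinery.
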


\begin{proof}
We may assume that $G$ is simply connected; we write $\Gb$ for the adjoint group.  Put $P$ for a standard parabolic subgroup of $G$ so that $X \simeq G/P$.  We use the identification
$$
\Z \otimes_{\Z[\Lambda]^W} \Z[\Lambda]^{W_P} = K_0(X)
$$
where $\Lambda$ is the weight lattice, $W_P$ is the Weyl of the Levi
part of $P$
and $\Z[\Lambda]^{W_P}$ and  $\Z[\Lambda]^W$ are the representation rings of $P$ and $G$ respectively.

For sake of contradiction, suppose $K_0(X)$  is generated by line bundles.
Since $\Pic(X)=\Z$ is generated by $\LL(\omega)$, where $\omega_1$
is the first fundamental weight (numbered as in \cite{Bou:g4}), all such bundles are powers of $\LL(\omega_1)$, i.e.,
are of the form $\LL(n\omega_1)$ for some $n\in \Z$.

There is a surjective homomorphism 
$\pi :K_0(X) \to K_0(\Gb)$ induced by $\Lambda \to \Lambda/\Lambda_r=\Z/2\Z={\langle\sigma\rangle}$, 
where $\Lambda_r$ is the root lattice.
By \cite[Example 3.7]{Z:twisted},
$K_0(\Gb)=\Z[y]/(y^2-2y,4y)$, where $y=1-e^\sigma$.

Now take the vector bundle $E$ corresponding to the $W_P$-orbit of the
second fundamental weight $\omega_2$, i.e., to $e^{\omega_2} +
e^{\omega_2-\alpha_2}$ (here $\alpha_2$ is the second simple root),
and observe that $\pi(\LL(n\omega_1))=1$ and $\pi(E)=2-2y$.  This is a
contradiction, as $\pi(E)$ can't be written as a linear combination of
$1$'s.
\end{proof}

\part{Exceptional collection of line bundles on $G_2$-varieties} \label{negative}

In this part, we study exceptional collections of line bundles on the
Borel variety $X$ of a group $G$ of type $G_2$ over an arbitrary
field.

\begin{eg}
Suppose that $G$ is split and $\car k = 0$.  One of the projective
homogeneous $G$-varieties is a 5-dimensional quadric $Y$ and $X \to Y$
is a $\P^1$-bundle.  An exceptional collection of vector bundles on
$Y$ described in \cite{kapranov:quadric} and
\cite{kapranov:homogeneous} includes 5 line bundles.  These yield
an exceptional collection of line bundles on $X$ of length 10 by
\cite[Cor.~2.7]{Orlov}.
\end{eg}

As $K_0(X)$ is a free module of rank 12 (the order of the Weyl
group), the collection provided in the preceding example is not of
expected length.  Nonetheless, we prove that it is ``best possible":
 
\begin{thm} 
\label{G2} 
Every exceptional collection of line bundles on the Borel variety of a
group of type $G_2$ has length $\le 10$.
\end{thm}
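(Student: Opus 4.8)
The plan is to turn this into a finite combinatorial problem in the weight lattice of $G_2$ and then to eliminate the forbidden configuration. By Proposition~\ref{prop:scalar_extensions} an exceptional collection of line bundles on $X$ base changes to one of the same length on $\ol{X}$, which is the variety of Borel subgroups of the \emph{split} group of type $G_2$ over $\ol{k}$, and there every line bundle is $\LL(\la)$ for a weight $\la$. By Proposition~\ref{BWB} — the implication \eqref{BWB.bundles}$\Rightarrow$\eqref{BWB.wts} suffices, and it holds in every characteristic — if $\LL(\la_1),\dots,\LL(\la_r)$ is such a collection written in its order, then $\la_j-\la_i+\rho$ is singular for all $i<j$. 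Identify $\Lambda\cong\Z^2$ via $\la\mapsto(\alpha_1\dual(\la),\alpha_2\dual(\la))$; then the six reflection hyperplanes become six explicit lines, all through the point $-\rho=(-1,-1)$ (because $\la_j-\la_i+\rho$ can be $0$, which is singular), and the hypothesis says precisely that every forward difference $\la_j-\la_i$ lies on their union $Z$. So it is enough to show that no $11$ distinct points of $\Z^2$ carry a linear order in which all $\binom{11}{2}$ forward differences lie in $Z$.

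A cheap bound is $r\le 12$: the Euler matrix $\bigl[\chi(\ol{X},\LL(\la_j-\la_i))\bigr]_{i,j}$ of a collection written in order is lower triangular with $1$'s on the diagonal, so it has rank $r$, forcing the classes $[\LL(\la_i)]$ to be independent in $K_0(\ol{X})\cong\Z^{12}$. The real content is gaining the final two. For each positive root $\alpha$ set $r_\alpha=\alpha\dual(\rho)$, so that the six values $r_\alpha$ are $1,1,2,3,4,5$, and note that a forward pair $(i,j)$ lies on the line indexed by $\alpha$ exactly when $\alpha\dual(\la_j)=\alpha\dual(\la_i)-r_\alpha$. Two facts should drive the count of how the $55$ forward pairs distribute among the six lines: (a) if $i<j<\ell$ and both $(i,j)$ and $(j,\ell)$ lie on the line indexed by $\alpha$, then $(i,\ell)$ has $\alpha\dual$-difference $-2r_\alpha$ and so is forced onto a different line; and (b) points sharing a value of $\alpha\dual$ are collinear, so the values of any $\beta\dual$ with $\beta\neq\alpha$ on them form an arithmetic progression, whence only boundedly many of those pairs can use the line of $\beta$ — in particular no single line can contain too many of the $\la_i$. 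Feeding (a), (b) and the explicit $r_\alpha$ into a pigeonhole and counting argument — organized by which lines serve the differences among the earliest and the latest bundles — should rule out $r=11$.

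I expect the crux, and the main obstacle, to be exactly this passage from $12$ down to $10$: the $K_0$ bound is immediate but two too large, and squeezing out the remaining factor of two seems to demand a fairly intricate, case-heavy analysis of the possible allotments of forward differences to the six $G_2$ lines, with the most delicate cases those in which many $\la_i$ are collinear, where the pigeonhole in (a) weakens and one must lean on the arithmetic-progression restriction (b) together with the specific angular arrangement of the six lines. That the bound is sharp — the $\P^1$-bundle $X\to Y$ over the $5$-dimensional quadric $Y$ pulls the five line bundles on $Y$ back to an exceptional collection of length $10$ on $X$ — indicates the argument must be essentially tight, so one should not expect any slack in the estimates.
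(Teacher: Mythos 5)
Your setup is fine: reducing to the split group over $\ol{k}$, invoking Proposition~\ref{BWB}\eqref{BWB.bundles}$\Rightarrow$\eqref{BWB.wts}, translating the exceptionality condition into ``every forward difference $\la_j-\la_i$ lies on the union of the six lines through $-\rho$,'' and the rank-$12$ bound from the triangular Euler matrix in $K_0(\ol{X})\cong\Z^{12}$ are all correct and match the paper's starting point. But the theorem \emph{is} the passage from $12$ to $10$, and at exactly that point your argument stops being a proof: observations (a) and (b) are only stated as things that ``should drive the count,'' and the conclusion that they ``should rule out $r=11$'' is never carried out. Moreover (b) itself is not established: the claim that no crab line can contain too many of the $\la_i$ is a nontrivial statement (the paper proves at most $5$ per line in Lemma~\ref{maxpts}, and this already requires the analysis of the $20$ weights via Lemma~\ref{pts20} and Corollary~\ref{LMP}). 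Since your constraints (a), (b) are translation-invariant and purely local to pairs/triples, while the configuration space of $11$ weights in $\Lambda\cong\Z^2$ is infinite, there is no reason a bounded pigeonhole over assignments of the $55$ forward pairs to six lines closes the argument; some mechanism is needed to control weights arbitrarily far from $-\rho$.

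For comparison, the paper's proof of Theorem~\ref{G2} supplies precisely that missing mechanism, and it is long: a metric near/far dichotomy together with the Trigonometry Lemma~\ref{trig} forces far weights into rigid collinearity patterns (Dichotomy Proposition~\ref{key}, the mirror version Proposition~\ref{mirror_farclose}, and the BAA/AAB/ABA Lemmas~\ref{BAA}, \ref{AAB}, \ref{ABA}, culminating in Proposition~\ref{threep}); and the residual finite problem for non-far weights is settled by exhaustive computer enumeration (Facts~\ref{noDMZ}, \ref{forty}, \ref{close}, involving $445$ candidate weights and $160{,}017$ maximal collections), not by a hand count. So the gap is concrete: you have reduced the theorem to a finite-looking combinatorial statement but have neither a proof of the per-line bound you invoke nor any argument handling weights far from $-\rho$, and the counting step that would exclude $r=11$ is asserted rather than performed.
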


The proof will occupy the rest of the paper.  But for now, we note
that this theorem is sufficient to prove Theorem \ref{neg}.

\begin{proof}[Proof of Theorem \ref{neg}]
By Proposition \ref{prop:scalar_extensions}, we may assume that the
group $G$ of type $G_2$ is split.  For $X$ the variety of Borel
subgroups of $G$, $K_0(X)$ is isomorphic to $\Z^{12}$ and Theorem
\ref{G2} says that there does not exist an exceptional collection of
the expected length consisting of line bundles.  Any other projective
homogeneous variety $Y$ for $G$ can be displayed as the base of a
$\P^1$-bundle $X \to Y$ and $K_0(Y)$ is a free $\Z$-module of rank 6.
Hence any exceptional collection of the expected length consisting of
line bundles on $Y$ lifts to an exceptional collection of the expected
length consisting of line bundles on $X$ by \cite[Cor.~2.7]{Orlov}
(which holds over any noetherian base scheme, cf.\
\cite[\S11]{Walter}).
\end{proof}

The group $K_0(X)$ depends neither on the base field nor on the
particular group $G$ of type $G_2$ under consideration.  Combining
this with Proposition \ref{prop:scalar_extensions}, in order to prove
Theorem \ref{G2} we may assume that the base field is algebraically
closed and hence that $G$ is split.  Proposition~\ref{BWB} reduces the
proof to a computation with totally ordered lists of weights, which we
write in ascending order.  We will use without much comment that, for
any exceptional collection $\la_1, \ldots, \la_n$ and any weight
$\mu$, the lists
\begin{equation} \label{transverse}
 \la_1 - \mu, \la_2 - \mu, \ldots, \la_n - \mu \quad \text{and} \quad {-\la_n}, {-\la_{n-1}}, \ldots, {-\la_1}
 \end{equation}
are also exceptional collections.  Thus, given an exceptional
collection, we obtain another exceptional collection of the same
length but with first entry $\la_1 = 0$.  Note also the trivial fact
that $\la_i \ne \la_j$ for all $i \ne j$, since $\Ext^*(\LL(\la_i),
\LL(\la_i)) = k$ for every weight $\la_i$ as in the proof of
Proposition~\ref{BWB}.

\section{A dichotomy}

The \linedef{crab} is the collection of weights $\la$ of $G_2$ such that $\la +
\rho$ is singular.  The crab consists of weights lying on 6 \linedef{crab
lines}.  Any pair of lines meets only at $-\rho$, and $-\rho$ lies on
all 6 crab lines.  The weight zero is not on any crab line. See
Figure~\ref{20points.fig} for a picture of the crab.
\begin{figure}[thb]
\begin{center}
\includegraphics[width=4in]{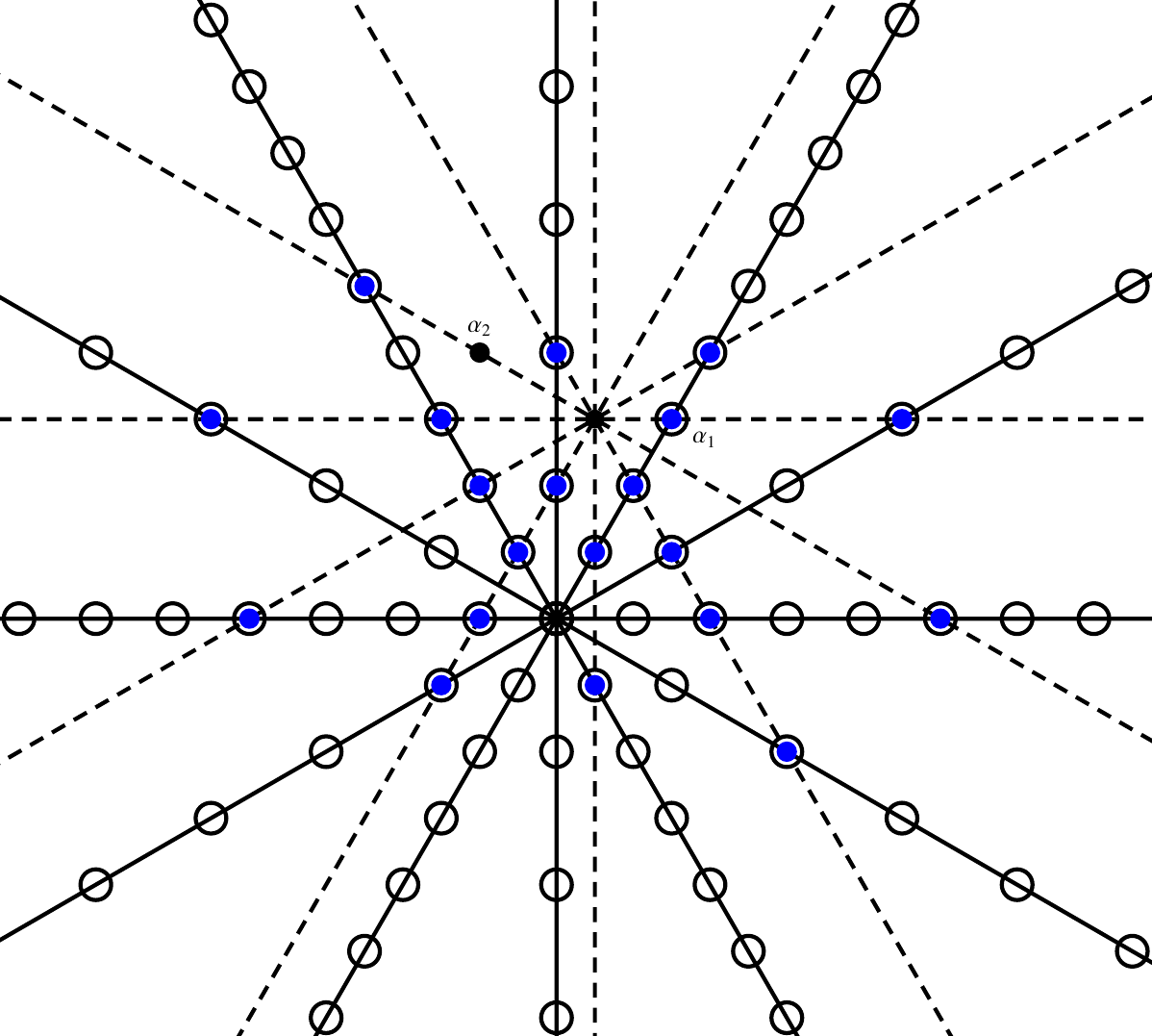} 
\caption{The crab lines (solid), the weights of the crab (circles),
the singular lines (dashed lines), and the 20 weights (disks).  All
singular lines meet at $0$ and all crab lines meet at $-\rho$.
The simple roots are $\alpha_1 = (1, 0)$ and 
 $\alpha_2 = (-3/2, \sqrt{3}/2)$.}\label{20points.fig}
\end{center}
\end{figure}

\begin{defn}
In Figure~\ref{20points.fig}, we find 20 weights on the intersection of
the crab lines and the singular lines, i.e., there are 20 weights $\la$ such
that $\la$ and $\la + \rho$ are both singular.  We call them \linedef{the 20 weights}.  
\end{defn}

We note for future reference that for each of the 20 weights $\la$, we have $|| \la || \le 3 \sqrt{3}$ and $\| \la + \rho \| \le 3 \sqrt{3}$.

\begin{lem} \label{pts20}
Suppose $0, b, c$ is an exceptional collection.
\begin{enumerate}
\item If $b$ and $c$ lie on the same crab
line, then $c - b$ is one of the 20 weights and it is on the singular line
parallel to that crab line.
\item If $c-b$ and $c$ lie on the same crab line, then $b$ is one of the 20 weights and it is on the singular line parallel to that crab line.
\end{enumerate}
\end{lem}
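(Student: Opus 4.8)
The plan is to unwind the hypothesis using Proposition~\ref{BWB}. That $0, b, c$ is an exceptional collection means, by Proposition~\ref{BWB}\eqref{BWB.wts}, that the three weights $b + \rho$, $c + \rho$, and $(c-b) + \rho$ are all singular; equivalently, $b$, $c$, and $c - b$ all lie in the crab. These three facts are the only input needed.

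First I would record the shape of the crab lines. A weight $\lambda$ lies in the crab exactly when $\lambda + \rho \in \bigcup_{\alpha \in \Phi^+} H_\alpha$, so the crab is the union of the six lines $H_\alpha - \rho$. Since $H_\alpha$ is a line through the origin, $H_\alpha - \rho$ is the line through $-\rho$ parallel to the singular line $H_\alpha$; this is the pairing of crab lines with parallel singular lines referred to in the statement, and since the six singular lines meet only at $0$, the six crab lines meet pairwise only at $-\rho$.

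For part~(1): if $b$ and $c$ lie on a common crab line, write it as $H_\alpha - \rho$. Then $b + \rho$ and $c + \rho$ both lie in $H_\alpha$, so their difference $c - b = (c + \rho) - (b + \rho)$ lies in the linear subspace $H_\alpha$ as well. Hence $c - b$ is singular and lies on the singular line $H_\alpha$ parallel to the crab line $H_\alpha - \rho$. Since $(c-b) + \rho$ is also singular, $c - b$ is a weight with both $c - b$ and $(c-b) + \rho$ singular, i.e., one of the 20 weights. Part~(2) is the mirror image: if $c - b$ and $c$ lie on a common crab line $H_\alpha - \rho$, then $(c-b) + \rho$ and $c + \rho$ lie in $H_\alpha$, hence so does their difference $-b$, hence $b \in H_\alpha$; combined with $b + \rho$ singular, this exhibits $b$ as one of the 20 weights, lying on the singular line parallel to the given crab line.

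I do not anticipate a real obstacle: the whole proof is bookkeeping with the three singularity conditions from Proposition~\ref{BWB} and the linearity of the hyperplanes $H_\alpha$. The one point worth stating carefully is the identification of the crab lines with the translates $H_\alpha - \rho$ of the singular lines, which is what makes ``the singular line parallel to that crab line'' meaningful; after that, both parts are one line of linear algebra each.
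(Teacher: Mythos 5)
Your proof is correct and follows essentially the same route as the paper: both arguments reduce to the observation that the difference of two weights whose $\rho$-shifts lie on a common hyperplane $H_\alpha$ again lies on $H_\alpha$, combined with the singularity conditions coming from Proposition~\ref{BWB}. The only cosmetic difference is that the paper proves (2) first and deduces (1) via the transformation in \eqref{transverse}, whereas you prove both parts directly by the same one-line linear algebra.
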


\begin{proof}
We prove (2) first.  The weight $b$ is in the crab because $0, b$ is
exceptional.  Further, $c-b = tc + (1-t)(-\rho)$ for some $t \in \R$,
hence $b = (1-t)(c + \rho)$, which is singular because $0, c$ is
exceptional, so $b$ is one of the 20.  If $x$ is a nonzero vector
orthogonal to $c + \rho$ and $(c-b)+\rho$, then it is also orthogonal
to their difference, $b$, which proves (2).  Then (1) is deduced from (2) via \eqref{transverse}.
\end{proof}

\begin{cor} \label{LMP}
In any exceptional collection $0, \la_2, \ldots, \la_n$, the distance between any pair of weights on the same crab line is at most $3\sqrt{3}$.
\end{cor}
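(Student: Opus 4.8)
The plan is to deduce this directly from Lemma~\ref{pts20}(1) by passing to a three-term subcollection. Suppose $\la_i$ and $\la_j$ are two weights of the collection lying on a common crab line; after relabeling assume $i < j$. Since the weight $0 = \la_1$ lies on no crab line, both $\la_i$ and $\la_j$ are among $\la_2,\dots,\la_n$, so in fact $2 \le i < j$ and the ordered triple $0,\la_i,\la_j$ is a subcollection of the given exceptional collection. Dropping terms while keeping the order preserves the defining $\Ext$-vanishing conditions, so $0,\la_i,\la_j$ is itself an exceptional collection.

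Next I would apply Lemma~\ref{pts20}(1) with $b=\la_i$ and $c=\la_j$: since $b$ and $c$ lie on the same crab line, the difference $\la_j-\la_i$ is one of the 20 weights. By the estimate recorded just before Lemma~\ref{pts20}, every one of the 20 weights $\mu$ satisfies $\|\mu\|\le 3\sqrt{3}$; hence $\|\la_j-\la_i\|\le 3\sqrt{3}$, which is precisely the asserted bound on the distance.

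I do not expect any real obstacle: the mathematical content is entirely contained in Lemma~\ref{pts20} together with the norm bound on the 20 weights, and the argument is a one-line reduction. The only point that needs care — and the only thing I would actually verify — is that both weights of the pair genuinely occur strictly after the initial entry $0$ in the list, so that $0,\la_i,\la_j$ is a legitimate (order-preserving) subcollection and Lemma~\ref{pts20}(1) is applicable; this is exactly what the observation ``the weight zero is not on any crab line'' provides.
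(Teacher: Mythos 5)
Your argument is correct and is essentially the paper's own proof: the paper likewise passes to a sub-list beginning with $0$, applies Lemma~\ref{pts20}(1) to conclude the difference is one of the 20 weights, and invokes the bound $\|\la\| \le 3\sqrt{3}$ for those weights. Your explicit reduction to the three-term subcollection $0,\la_i,\la_j$ (justified by the observation that $0$ lies on no crab line) is just a slightly more pedantic phrasing of the same step.
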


\begin{proof}
Fix a crab line of interest and let  $0, \la_2, \la_3, \ldots, \la_n$
be an exceptional collection.  By restricting to a sub-list, we may assume that all of the weights $\la_2, \ldots, \la_n$ lie on that crab line.
By Lemma~\ref{pts20}(1), $\la_j - \la_2$ is one of the 20 weights for $j = 3, \ldots, n$, hence
 $||\la_j - \la_2|| \le 3\sqrt{3}$, as claimed.
\end{proof}

\begin{lem}[Trigonometry] \label{trig}
If two weights on crab lines are each $\ge R$ from $-\rho$ and are closer than $2 (2 - \sqrt{3})R$ apart, then they are on the same crab line.
\end{lem}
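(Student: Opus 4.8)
We argue by contradiction. Call the two weights $P$ and $Q$, and suppose they lie on \emph{different} crab lines. Since all six crab lines pass through $-\rho$, the rays from $-\rho$ toward $P$ and toward $Q$ run along two distinct crab lines. Put $a=\|P+\rho\|$ and $b=\|Q+\rho\|$, so $a,b\ge R$ by hypothesis, and let $\phi$ denote the angle of the triangle $(-\rho,P,Q)$ at the vertex $-\rho$. The lemma will follow from a lower bound on the opposite side $\|P-Q\|$.

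The essential geometric point is that \emph{two distinct crab lines meet at an angle that is a positive multiple of $30^\circ$}. Indeed, the crab line attached to a positive root $\alpha$ is the line through $-\rho$ parallel to the singular hyperplane $H_\alpha$, and for $G_2$ the six hyperplanes $H_\alpha$ — computed from the explicit simple roots $\alpha_1=(1,0)$ and $\alpha_2=(-3/2,\sqrt{3}/2)$ used in Figure~\ref{20points.fig} — point in the six directions $0^\circ,30^\circ,60^\circ,\dots,150^\circ$. Consequently $\phi\in\{30^\circ,60^\circ,\dots,150^\circ\}$, so $\cos\phi\le\cos 30^\circ=\sqrt{3}/2$; moreover a larger value of $\phi$ only increases $\|P-Q\|$, so the worst case is genuinely $\phi=30^\circ$ with $a=b=R$.

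By the law of cosines, $\|P-Q\|^2=a^2+b^2-2ab\cos\phi\ge a^2+b^2-\sqrt{3}\,ab$. The right-hand side is a positive-definite quadratic form in $a,b$ whose minimum on $\{a,b\ge R\}$ is attained at the corner $a=b=R$ (since $a^2+b^2-\sqrt3\,ab=(a-b)^2+(2-\sqrt3)ab$ is increasing in each variable there); a short computation turns this into the asserted lower bound for $\|P-Q\|$, which contradicts the hypothesis that $P$ and $Q$ are closer than $2(2-\sqrt{3})R$. Hence $P$ and $Q$ must lie on a single crab line.

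The only step requiring real attention is the angle computation — verifying from the $G_2$ root system that the six singular (equivalently, crab) lines are equally spaced at $30^\circ$, so that distinct crab lines are never nearly parallel. Everything after that is the law of cosines together with the constraint $a,b\ge R$, which is what pins down the numerical constant; so I expect the root-system bookkeeping, short as it is, to be the main thing to get right.
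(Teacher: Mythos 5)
Your reduction is the same one the paper uses: two distinct crab lines meet at $-\rho$ at an angle that is a nonzero multiple of $30^\circ$, so one bounds the side of the triangle with vertex $-\rho$ opposite that angle. The genuine gap is the deferred ``short computation'' at the end. Your corner minimum gives $\|P-Q\|^2\ge R^2+R^2-\sqrt{3}\,R^2=(2-\sqrt{3})R^2$, i.e.\ $\|P-Q\|\ge\sqrt{2-\sqrt{3}}\,R=2\sin(15^\circ)\,R\approx 0.518\,R$, whereas the lemma asserts the bound $2(2-\sqrt{3})R=2\tan(15^\circ)\,R\approx 0.536\,R$. These constants do not match: $\sqrt{2-\sqrt{3}}<2(2-\sqrt{3})$, since squaring gives $2-\sqrt{3}\approx 0.268$ versus $4(2-\sqrt{3})^2=28-16\sqrt{3}\approx 0.287$. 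So what you have actually proved is the statement with the smaller constant $\sqrt{2-\sqrt{3}}\,R$, and no refinement of your estimate can recover the stated constant: two points at distance exactly $R$ from $-\rho$ on crab lines meeting at $30^\circ$ are exactly $2\sin(15^\circ)R<2(2-\sqrt{3})R$ apart, so the claimed constant is not forced by the hypotheses as you (or the paper) use them. (A smaller point: $a^2+b^2-\sqrt{3}\,ab$ is not increasing in each variable on all of $\{a,b\ge R\}$ --- the $a$-derivative is negative when $b>\tfrac{2}{\sqrt{3}}a$ --- but the corner conclusion survives via $a^2+b^2-\sqrt{3}\,ab=(a-b)^2+(2-\sqrt{3})ab\ge(2-\sqrt{3})R^2$, so that is only a wording issue; the real problem is the constant.)

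For comparison, the paper's proof runs the same triangle argument but asserts that the third side of an isosceles triangle with legs $R$ and apex angle $2\theta$ has length $2R\tan\theta$; that chord in fact has length $2R\sin\theta$ (the value $2R\tan\theta$ is the length of the shortest segment joining the two lines that avoids the open disc of radius $R$ about $-\rho$, which the hypotheses do not force). So your law-of-cosines computation, carried out honestly, is the corrected version of the paper's own argument, and it shows that this purely planar reasoning yields $2\sin(15^\circ)R$, not $2\tan(15^\circ)R$. To make your write-up correct you should either restate the lemma with the constant $\sqrt{2-\sqrt{3}}=2\sin 15^\circ$ (and then recheck the numerical slack in the places where the lemma is invoked, e.g.\ the estimate $1/(2(2-\sqrt{3}))<1.9$ in the Dichotomy Proposition, which would become $1/(2\sin 15^\circ)\approx 1.932$), or supply an additional argument --- beyond the triangle inequality picture --- that excludes the near-extremal configurations and genuinely delivers $2(2-\sqrt{3})R$.
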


\begin{proof}
For sake of contradiction, suppose that the two weights are on different crab lines.  The distance between the two weights is at least the length of the shortest line segment joining the two crab lines and meeting them at least $R$ from $-\rho$.  This segment is the third side of an isosceles triangle with two sides of length $R$ and internal angle $2\theta$, hence it has length $2R\tan\theta$, where $2\theta$ is the angle between the two crab lines.  As $15^\circ \le \theta \le 75^\circ$, the minimum is achieved at $\tan 15^\circ = 2 - \sqrt{3}$.
\end{proof}

The following proposition says that a close-in weight early in the exceptional collection controls the distribution of far weights on crab lines coming later in the collection.  

\begin{prop}[Dichotomy] \label{key}
Let $0, \mu, \la_3, \ldots, \la_n$ be an exceptional collection.  Then exactly one of the following holds:
\begin{enumerate}
\item $\mu$ is one of the 20 weights, and all the $\la_j$ such that $||\la_j + \rho|| > 6\sqrt{3}$ lie on the crab line parallel to the singular line containing $\mu$.
\item $\mu$ is \emph{not} one of the 20 weights and $||\la_j + \rho|| < 3||\mu||$ for all $j = 3, \ldots, n$.
\end{enumerate} 
\end{prop}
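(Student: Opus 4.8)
The plan is to extract from the exceptional collection $0, \mu, \la_3, \ldots, \la_n$ the constraints forced by Lemma~\ref{pts20} and Lemma~\ref{trig} on a far-out weight $\la_j$, and to show these constraints are incompatible with $\mu$ being ``general'' unless the far weights concentrate on a single crab line. First I would observe that, since $0,\mu$ and $0,\mu,\la_j$ are exceptional, $\mu$ lies on a crab line and $\la_j$ lies on a crab line, and by Lemma~\ref{pts20}(2) (applied with $b=\mu$, $c=\la_j$) \emph{if} $\la_j-\mu$ and $\la_j$ lie on the same crab line then $\mu$ is one of the 20 weights and lies on the singular line parallel to that crab line. This is the mechanism that produces alternative (1): when $\mu$ is one of the 20, it sits on a unique singular line, hence the crab line parallel to it is determined, and the conclusion to aim for is that every far $\la_j$ lies on precisely that crab line.

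The core of the argument is a trichotomy on the pair of crab lines carrying $\mu$ and $\la_j$. Write $R = \|\la_j+\rho\|$ and note $\|\mu+\rho\| = \|\mu - 0\| \le$ (something controlled): more precisely $\mu$ is in the crab, so $\mu+\rho$ is singular, but the relevant bound is $\|\mu\|$ itself, and since $0,\mu$ exceptional forces $\mu$ on a crab line while the line $0,\mu$ being ``short'' is exactly what case (2) quantifies. I would argue: either (a) $\la_j-\mu$ lies on the same crab line as $\la_j$, in which case Lemma~\ref{pts20}(2) gives that $\mu$ is one of the 20 and fixes the crab line of $\la_j$ as the one parallel to $\mu$'s singular line; or (b) $\la_j-\mu$ and $\la_j$ lie on different crab lines. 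In subcase (b), both $\la_j+\rho$ and $(\la_j-\mu)+\rho = \la_j+\rho-\mu$ are points on crab lines (translated appropriately so that $-\rho$ is the common vertex), they are $\|\mu\|$ apart, and each is distance $\ge R - \|\mu\|$ or so from $-\rho$; when $R$ is large relative to $\|\mu\|$, Lemma~\ref{trig} forces them onto the same crab line, contradicting (b). Chasing the constant: Lemma~\ref{trig} with separation $\|\mu\|$ and radius $R' \ge R - \|\mu\|$ gives a contradiction as soon as $\|\mu\| < 2(2-\sqrt3)(R-\|\mu\|)$, i.e. roughly $R > c\|\mu\|$ for an explicit $c$; the threshold $6\sqrt3$ in alternative (1) and the factor $3$ in alternative (2) are tuned so that when $\mu$ is \emph{not} one of the 20 (so subcase (a) is impossible, as it would force $\mu$ into the 20), the only surviving possibility is $R < 3\|\mu\|$, and when $\mu$ \emph{is} one of the 20, the weights with $R > 6\sqrt3$ are forced into subcase (a) and hence onto the single parallel crab line. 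I would also need to rule out $\la_j-\mu$ or $\la_j$ failing to lie on a crab line at all: $\la_j$ is on a crab line since $0,\mu,\la_j$ restricted to $0,\la_j$ is exceptional, and $\mu,\la_j$ exceptional gives $\la_j-\mu$ in the crab, so both genuinely lie on crab lines.

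Finally I would check the two alternatives are mutually exclusive and exhaustive: exhaustiveness is the trichotomy above (either $\mu$ is one of the 20, giving (1), or it is not, giving (2)); exclusivity is immediate since (1) and (2) have contradictory hypotheses on $\mu$. One subtlety to handle carefully is the bookkeeping of the base point: Lemma~\ref{trig} is stated for weights ``on crab lines'' measuring distance from $-\rho$, whereas the natural objects here are $\la_j+\rho$ and $\la_j+\rho-\mu$ viewed as lying on \emph{translated} crab lines through the origin; I would phrase this by noting that subtracting $-\rho$ (equivalently, that the crab lines all pass through $-\rho$) identifies the configuration correctly, and that ``parallel singular line through $0$'' versus ``crab line through $-\rho$'' is exactly the parallelism in Lemma~\ref{pts20}. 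The main obstacle I anticipate is pinning down the precise numerical constants — verifying that $6\sqrt3$ and the factor $3$ are the right thresholds making Lemma~\ref{trig} bite in subcase (b) while leaving subcase (a) as the genuine exception — together with confirming, using $\|\la\|\le 3\sqrt3$ and $\|\la+\rho\|\le3\sqrt3$ for the 20 weights, that when $\mu$ is one of the 20 the bound $\|\la_j+\rho\|>6\sqrt3$ is exactly what is needed to exclude subcase (b). Everything else is a routine application of the three lemmas already established.
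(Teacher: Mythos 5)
Your treatment of alternative (2) is exactly the paper's argument: if $\mu$ is not one of the 20 weights, then Lemma~\ref{pts20}(2) forces $\la_j-\mu$ and $\la_j$ onto different crab lines, and the Trigonometry Lemma~\ref{trig}, applied to this pair at distance $\|\mu\|$, gives $\|\la_j+\rho\| \le \bigl(1+\tfrac{1}{2(2-\sqrt{3})}\bigr)\|\mu\| < 3\|\mu\|$. The gap is in alternative (1). You propose to exclude your subcase (b) (that $\la_j-\mu$ and $\la_j$ lie on different crab lines) by the same appeal to Lemma~\ref{trig} whenever $\|\la_j+\rho\|>6\sqrt{3}$; but that computation only excludes subcase (b) when $\|\la_j+\rho\| > \bigl(1+\tfrac{1}{2(2-\sqrt{3})}\bigr)\|\mu\| \approx 2.87\,\|\mu\|$, and since a 20-weight $\mu$ only satisfies $\|\mu\|\le 3\sqrt{3}$, this is a threshold of about $14.9$, strictly larger than $6\sqrt{3}\approx 10.4$. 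Weights with $6\sqrt{3} < \|\la_j+\rho\| \le 14.9$ are therefore not covered by your argument, and no tuning of Lemma~\ref{trig} closes this, because $2-\sqrt{3}=\tan 15^\circ$ is its worst-case constant. The constant is not cosmetic: Proposition~\ref{threep} (Case~1) feeds $6\sqrt{3}$ into the chain $\|b+\rho\| \le 25 + 6\sqrt{3} + \sqrt{7} < 42$ to conclude that $b$ is near, and this breaks if $6\sqrt{3}$ is replaced by $\approx 15$.

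The paper proves (1) by a different and sharper mechanism, not via Lemma~\ref{pts20}(2) plus Lemma~\ref{trig}: exceptionality of $0,\mu,\la_j$ puts $\la_j$ on the intersection of the crab with the crab translated by $\mu$, and since $\mu$ lies on a singular line (hence is parallel to a crab line), any intersection point off the parallel crab line is a vertex of a triangle whose side of length $\|\mu\|$ is bracketed by angles that are multiples of $30^\circ$ with sum at most $150^\circ$. The Law of Sines then gives $\|\la_j+\rho\| \le \frac{\sin(\max\{\alpha,\beta\})}{\sin(180^\circ-\alpha-\beta)}\,\|\mu\| \le 2\|\mu\| \le 6\sqrt{3}$: the exact $30^\circ$ geometry buys the factor $2$ where the generic estimate of Lemma~\ref{trig} only buys $\approx 2.87$. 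Your subcase (b) can be repaired by replacing Lemma~\ref{trig} with this Law-of-Sines computation (e.g.\ in the triangle with vertices $-\rho$, $\la_j-\mu$, $\la_j$, all of whose sides are parallel to crab directions when $\mu$ is one of the 20 weights), but as written the proposal does not reach the stated threshold $6\sqrt{3}$.
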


\begin{proof}
First suppose that $\mu$ is \emph{not} one of the 20 weights.  As $0, \la_j - \mu, \la_j$ is an exceptional collection, $\la_j - \mu$ is on a crab line; by Lemma \ref{pts20}(2) it is a different crab line from $\la_j$.  By the Trigonometry Lemma, 
\[
1.9 \| \mu \| \ge \frac{\| \la_j - (\la_j - \mu) \|}{2(2-\sqrt{3})} \ge \min \{ \| \la_j + \rho \|, \| \la_j - \mu + \rho \| \}.
\]
If $\| \la_j - \mu + \rho \|$ is the minimum, then $\| \la_j + \rho \| \le \| \la_j - \mu + \rho \| + \| \mu \| \le 2.9 \| \mu \|$.  This proves (2).

Suppose that $0, \mu, \la$ is an exceptional collection such that $\mu$ is one of the 20 weights and $\la$ does not lie on the crab line parallel to the $\mu$ singular line.  Translating $0, \mu, \la$ by $-\mu$ gives the exceptional collection $0, \la - \mu$ so $\la - \mu$ also belongs to the crab, i.e., $\la$ lies in the intersection of the crab and the crab shifted by $\mu$.  We will show that this implies $|| \la + \rho || \le 6\sqrt{3}$, even ignoring questions of belonging to the weight lattice.

\begin{figure}[hbt]
\includegraphics[width=3in]{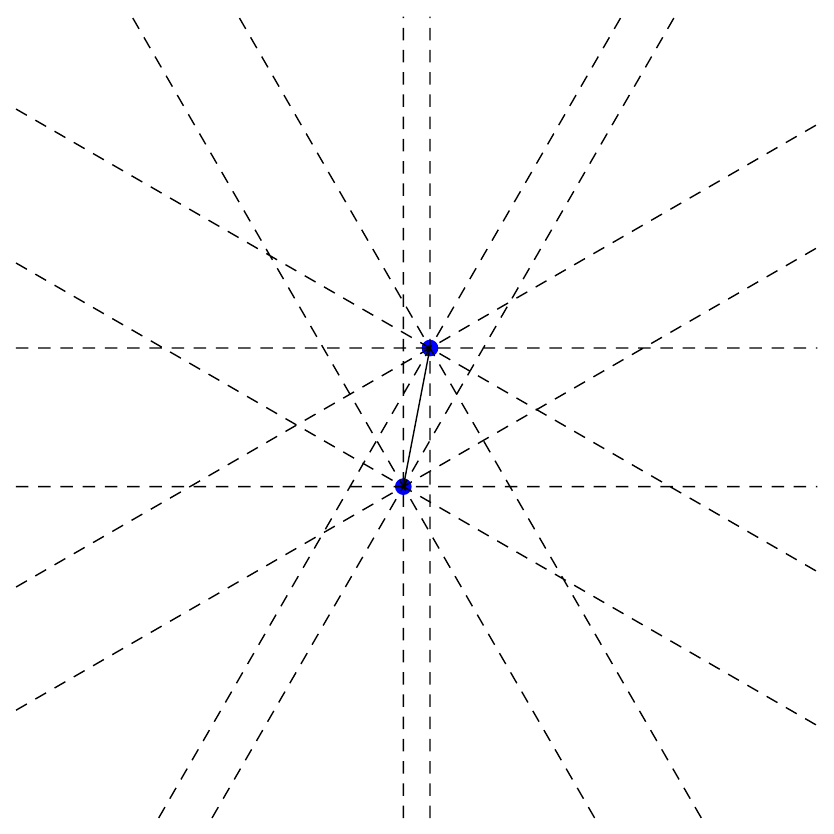}
\caption{The intersection of the crab and the crab shifted by a $\mu$ on a singular line. The solid line has length $\| \mu \|$.} \label{twocrab}
\end{figure}
Indeed, the crab and the crab shifted by $\mu$ give a picture as in Figure~\ref{twocrab}.  For each weight on the intersection of two dashed lines, we find a triangle where a side of length $||\mu||$ is bracketed by angles $\alpha, \beta$ that are multiples of 30\deg\ and $\alpha + \beta \le 150^\circ$.  Using the Law of Sines, we find that the length of the longest of the other two sides of such a triangle is
\[
\frac{\sin(\max \{ \alpha, \beta \})}{\sin(180^\circ - \alpha - \beta)} ||\mu||.
\]
Plugging in all possibilities for $\alpha$ and $\beta$, we find that the fraction has a maximum of 2.  The length of $\mu$ is at most $3\sqrt{3}$, whence the claim.
\end{proof}


\section{Three weights on two crab lines}

We now examine the possibilities for exceptional collections $0, \la_2, \la_3, \la_4$ such that two of the $\la_j$ lie on one crab line and the third lies on a different crab line; there are three possible such permutations, which we label BAA, AAB, and ABA.

\begin{defn}
A weight $a$ is \linedef{near} if $||a + \rho|| \le 42$. 
It is \linedef{far} if $\|a + \rho\| > 42 + 3 \sqrt{3}$.
\end{defn}

\begin{lem}[BAA] \label{BAA}
Suppose that $0, b, a_1, a_2$ is an exceptional collection such that
$a_1, a_2$ are on the same crab line and $b$ is neither on that crab line nor on the parallel singular line.  Then $b, a_1, a_2$ are all near weights (and in fact are within $21.1$ of $-\rho$).
\end{lem}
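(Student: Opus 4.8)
The plan is to translate the hypothesis into the combinatorics of weights, then use the $20$ weights together with the Dichotomy to pin $a_2-b$ very close to $-\rho$, and finish by elementary trigonometry exploiting that $b$ sits on a crab line transverse to the singular line parallel to the crab line of $a_1,a_2$. By Proposition~\ref{BWB}\eqref{BWB.wts}, the hypothesis means exactly that each of $b$, $a_1$, $a_2$, $a_1-b$, $a_2-b$, $a_2-a_1$ lies on the crab. Let $\ell$ be the crab line through $a_1$ and $a_2$ and let $s$ be the singular line parallel to $\ell$, so that $\ell$ is also the crab line parallel to $s$. By Corollary~\ref{LMP} we have $\|a_1-a_2\|\le 3\sqrt3$, and by Lemma~\ref{pts20}(1) applied to the exceptional collection $0,a_1,a_2$ the weight $a_2-a_1$ is one of the $20$ weights and lies on $s$.

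The heart of the argument is the following. Applying the operations \eqref{transverse} to $0,b,a_1,a_2$ (negate, reverse, translate by $a_2$) produces the exceptional collection $0,\ a_2-a_1,\ a_2-b,\ a_2$, and I would apply the Dichotomy (Proposition~\ref{key}) to it with $\mu=a_2-a_1$. Since $\mu$ is one of the $20$ weights, alternative~(1) holds: every later entry $\la_j$ with $\|\la_j+\rho\|>6\sqrt3$ lies on the crab line parallel to the singular line through $\mu$, i.e.\ on $\ell$. But $a_2-b$ cannot lie on $\ell$: otherwise $b=a_2-(a_2-b)$ would be a difference of two points of $\ell$, hence a vector parallel to $\ell$, forcing $b\in s$ and contradicting the hypothesis. (This is the only place where ``$b$ is not on the parallel singular line'' gets used.) Hence $\|(a_2-b)+\rho\|\le 6\sqrt3$. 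Writing $a_2+\rho=b+\bigl((a_2-b)+\rho\bigr)$, and noting that $a_2+\rho$ lies on $s$ because $a_2\in\ell$, this exhibits $a_2+\rho$ as a point of $s$ at distance $\le 6\sqrt3$ from $b$.

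To conclude, $b$ lies on some crab line $\ell'\neq\ell$; because the six crab and singular directions are mutually inclined by multiples of $30^\circ$, the lines $\ell'$ and $s$ meet at a unique point $z$ at an angle $\ge 30^\circ$, and $z$, being a crab-line/singular-line intersection point, satisfies $\|z\|,\|z+\rho\|\le 2\sqrt7$. Since $b\in\ell'$ is within $6\sqrt3$ of a point of $s$, its distance to $s$ is $\le 6\sqrt3$, so $\|b-z\|\le 6\sqrt3/\sin 30^\circ=12\sqrt3$ and therefore $\|b+\rho\|\le\|b-z\|+\|z+\rho\|$ is bounded explicitly; likewise $a_2+\rho$ lies within $12\sqrt3$ of $z$, which bounds $\|a_2+\rho\|$, and then $\|a_1+\rho\|\le\|a_2+\rho\|+\|a_1-a_2\|\le\|a_2+\rho\|+3\sqrt3$. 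Tracking, for each admissible angle, the matching bound on $\|a_2-a_1\|$ and the exact position of $z$ (rather than using the crude worst case uniformly) sharpens these estimates to the asserted bound $21.1$, and in particular shows that $b$, $a_1$, $a_2$ are near.

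The conceptual chain — $a_2-a_1$ is one of the $20$, so the Dichotomy pins $a_2-b$ near $-\rho$, so $b$ lies near the transverse singular line $s$, so $b$ and hence $a_1,a_2$ lie near $-\rho$ — is short and robust. The main obstacle is the final step: extracting the sharp constant $21.1$ (rather than the looser bound that a uniform worst-case estimate yields) requires grinding through the explicit $G_2$ trigonometry, matching each crab-line/singular-line angle with the corresponding lengths. A minor subtlety to check carefully is the assertion that $a_2-b\notin\ell$, which is exactly where the hypothesis on $b$ enters.
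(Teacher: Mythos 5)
Your argument is correct as far as the qualitative conclusion goes, and it takes a genuinely different route from the paper's. The paper translates the collection by $-b$ to obtain $0,\,a_1-b,\,a_2-b$, observes that these two weights lie on \emph{different} crab lines (this is also exactly where the hypothesis that $b$ is off the parallel singular line enters there), notes they are at most $3\sqrt{3}$ apart by Corollary~\ref{LMP}, and then applies the Trigonometry Lemma~\ref{trig} twice: first to force $a_j-b$ close to $-\rho$, hence $\|a_j-b\|$ small, and then to the pair $a_j,b$ on different crab lines, landing directly at the stated numerical bound. You instead pass to the reversed and translated collection $0,\,a_2-a_1,\,a_2-b,\,a_2$ (a legitimate use of \eqref{transverse}), use Lemma~\ref{pts20}(1) to see that $\mu=a_2-a_1$ is one of the $20$ weights on the singular line $s$ parallel to the crab line of $a_1,a_2$, and invoke alternative (1) of the Dichotomy Proposition~\ref{key}; since $a_2-b$ on that crab line would force $b\in s$ (your identification of where the hypothesis on $b$ is used is correct), you get $\|(a_2-b)+\rho\|\le 6\sqrt{3}$. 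Your concluding estimates are sound: the angle between the crab line of $b$ and $s$ is at least $30^\circ$, the intersection point $z$ satisfies $\|z\|,\|z+\rho\|\le 2\sqrt{7}$ by the law of sines (the degenerate collinear case cannot occur since $z\neq 0$), and you obtain roughly $26$ for $\|b+\rho\|,\|a_2+\rho\|$ and about $31.3$ for $\|a_1+\rho\|$, comfortably below $42$, so ``all near'' holds. Your route leans on heavier machinery (the Dichotomy, which itself rests on the $20$-weights analysis) where the paper needs only LMP and Trigonometry, but it is clean and avoids the double application of the trigonometric estimate.

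The one genuine shortfall is the parenthetical sharp bound: you do not prove that $b,a_1,a_2$ lie within $21.1$ of $-\rho$; you only assert that tracking angles case by case would sharpen your estimates to that constant. That is not obviously true for your chain of worst-case losses (the $6\sqrt{3}$ from Dichotomy, the factor $1/\sin 30^\circ$, the $2\sqrt{7}$ for $z$, and the extra $3\sqrt{3}$ for $a_1$ compound to about $31.3$), so as written the parenthetical remains unproved. Every later use of Lemma~\ref{BAA} (in Lemma~\ref{ABA} and Proposition~\ref{threep}) invokes only nearness, i.e.\ the threshold $42$, so your proof suffices for the paper's applications; but to establish the statement as written you should either carry out the promised refinement in detail or switch to the paper's direct estimate on the weights $a_j-b$.
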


\begin{proof} 
Translating the exceptional collection, we find the collection $0, a_1
-b, a_2 -b$, so $a_1 - b$ and $a_2 - b$ belong to the crab.  Now,
$a_1, a_2$ are on a crab line (call it $A$) and $b$ is not on the
parallel singular line (i.e., $b$ is not parallel to $a_2 - a_1$), therefore $a_1 -b$ and $a_2 - b$ are not on the $A$ crab line.  Furthermore, because the direction $a_2 - a_1 = (a_2 - b) - (a_1 - b)$ characterizes the $A$ line, we conclude that $a_1 - b$ and $a_2 - b$ lie on different crab lines.

However,
\[
|| (a_2 - b) - (a_1 - b)|| = || a_2 - a_1 || \le 3\sqrt{3}
\]
by Corollary~\ref{LMP}, hence
\[
|| a_j - b + \rho|| \le \frac{3\sqrt{3}}{2(2-\sqrt{3})}
\]
by the Trigonometry Lemma~\ref{trig}.  By the triangle inequality
\[
|| a_j - b || \le ||a_j - b + \rho|| + ||-\rho|| \le
\frac{3\sqrt{3}}{2(2-\sqrt{3})} + \sqrt{7} < 12.4.
\]
As $a_j$ and $b$ are on different crab lines, the argument in the Trigonometry Lemma~\ref{trig} gives that $||a_j + \rho||$ and $||b + \rho||$ are at most
\[
\frac{\frac{3 \sqrt{3}}{2(2 - \sqrt{3})} + \sqrt{7}}{2(2-\sqrt{3})} <
21.04.
\qedhere
\]
\end{proof}

Here are two corollaries from the Trigonometry Lemma~\ref{trig}.
\begin{cor} \label{plusrho}
If $\la$ and $\la + \rho$ are on crab lines and $|| \la + \rho || > 7.7$, then $\la$ and $\la + \rho$ are on the same crab line.
\end{cor}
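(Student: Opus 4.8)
The plan is to apply the Trigonometry Lemma~\ref{trig} to the pair of weights $\lambda$ and $\lambda+\rho$, which both lie on crab lines by hypothesis. These two weights are exactly $\|\rho\| = \sqrt{7}$ apart, so it suffices to exhibit a radius $R$ with $\sqrt{7} < 2(2-\sqrt{3})R$ for which both weights lie at distance $\ge R$ from $-\rho$; the lemma then forces them onto a common crab line.

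The distances in question are $\|\lambda+\rho\|$ and $\|(\lambda+\rho)+\rho\| = \|\lambda+2\rho\|$. The hypothesis gives $\|\lambda+\rho\| > 7.7$, and the triangle inequality gives $\|\lambda+2\rho\| \ge \|\lambda+\rho\| - \|\rho\| > 7.7 - \sqrt{7}$, so one can take $R = 7.7 - \sqrt{7}$. (In particular both weights are bounded away from $-\rho$, so each lies on a well-defined crab line.) The one remaining point is the numerical inequality $2(2-\sqrt{3})(7.7 - \sqrt{7}) > \sqrt{7}$, equivalently $7.7 > \sqrt{7}\bigl(1 + \tfrac{1}{2(2-\sqrt{3})}\bigr)$; the right-hand side is roughly $7.58$, so this holds, and this narrow margin is precisely the reason the constant in the statement has been chosen as $7.7$.

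I expect no real obstacle beyond bookkeeping: the whole content is the observation that translating a crab-line point by the fixed vector $\rho$ moves it a bounded amount ($\sqrt{7}$), which is small compared with $2(2-\sqrt{3})$ times the distance to $-\rho$ once that distance exceeds $7.7-\sqrt{7}$. If a cleaner constant were desired one would simply redo the numeric check; nothing structural changes.
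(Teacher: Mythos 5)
Your proof is correct and takes essentially the same route as the paper: apply the Trigonometry Lemma~\ref{trig} to $\lambda$ and $\lambda+\rho$, use the triangle inequality to get $\|\lambda+2\rho\| > 7.7 - \sqrt{7}$, and verify the numerical inequality $\sqrt{7} < 2(2-\sqrt{3})R$. The only cosmetic difference is that the paper rounds the radius down to $R = 5.05$ rather than carrying $R = 7.7-\sqrt{7}$ exactly.
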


\begin{proof}
We use the triangle inequality to bound the distance of $\la + \rho$ from $-\rho$:
\[
|| \la + 2\rho || \ge || \la + \rho || - || {-\rho}|| > 7.7 - \sqrt{7} > 5.05.
\]
The distance between $\la$ and $\la + \rho$ is $||\rho|| = \sqrt{7} 
< 2(2-\sqrt{3})5.05$, so taking $R = 5.05$ in the Trigonometry Lemma~\ref{trig} gives the claim.
\end{proof}

\begin{cor} \label{crab.diff}
If $a$ and $b - a$ both lie on some crab line $A$, then so does $b + \rho$.  If furthermore $|| b + \rho || > 7.7$, then $b$ also lies on $A$.
\end{cor}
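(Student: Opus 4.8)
The plan is to exploit the fact, recalled in the definition of the crab, that all six crab lines pass through the single point $-\rho$; this is what makes the additive bookkeeping work. So I would begin by writing the crab line $A$ as a coset $A = -\rho + \R v$ of a line through the origin, for a suitable direction vector $v$ (namely the direction of the singular line parallel to $A$, though this will not actually be needed). For the first assertion, the hypotheses $a \in A$ and $b - a \in A$ let me write $a = -\rho + sv$ and $b - a = -\rho + uv$ for some $s,u \in \R$; adding these and rearranging gives $b + \rho = -\rho + (s+u)v$, which lies on $A$. Phrased invariantly: $a + \rho$ and $(b-a) + \rho$ lie in the linear subspace $\R v$, which is closed under addition, so $(a+\rho) + ((b-a)+\rho) = b + 2\rho$ lies in $\R v$, i.e.\ $b + \rho \in -\rho + \R v = A$.

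For the second assertion I would simply feed this into Corollary~\ref{plusrho}. Having located $b+\rho$ on the crab line $A$, and given $\|b+\rho\| > 7.7$, Corollary~\ref{plusrho} applied with $\la = b$ puts $b$ and $b + \rho$ on the same crab line, which must then be $A$. The one additional input this needs is that $b$ itself lies on a crab line; in every application of this corollary $b$ will be a member of an exceptional collection beginning with $0$, hence lies in the crab, so this holds automatically, but I would record it explicitly to keep the logic transparent.

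I do not anticipate any genuine obstacle here: the content is a two-line affine computation together with a citation of Corollary~\ref{plusrho}. The only points needing care are (i) that the common point of the crab lines is $-\rho$ rather than the origin — this is precisely why the term ``$+\rho$'' appears in the conclusion of the first assertion — and (ii) supplying the hypothesis that $b$ lies on a crab line before invoking Corollary~\ref{plusrho} in the second assertion.
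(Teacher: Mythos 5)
Your proof is correct and is essentially the paper's own argument: the paper establishes the first claim by taking a nonzero vector orthogonal to $a+\rho$ and $(b-a)+\rho$ and noting it is orthogonal to their sum $(b+\rho)+\rho$, which is just the dual phrasing of your computation in $-\rho+\R v$, and it likewise settles the second claim by citing Corollary~\ref{plusrho}. Your explicit observation that $b$ must itself lie on a crab line before invoking Corollary~\ref{plusrho} is a hypothesis the paper also uses silently (it holds in every application, where $b$ is a member of an exceptional collection beginning with $0$ and hence lies in the crab), so nothing is missing.
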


\begin{proof}
Let $x$ be a nonzero vector orthogonal to $a + \rho$ and $b - a + \rho$.  Then $x$ is also orthogonal to $(b - a + \rho) + a + \rho = (b + \rho) + \rho$; this proves the first claim.
For the second claim, we apply Corollary~\ref{plusrho}.
\end{proof}

\begin{lem}[AAB]
\label{AAB}
If $0, a_1, a_2, b$ is an exceptional collection where $a_1, a_2$ are
on one crab line and $b$ is on a different crab line, then at least
one of $a_1, a_2, b$ is near.
\end{lem}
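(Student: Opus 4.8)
The plan is to argue by contradiction: suppose $0, a_1, a_2, b$ is an exceptional collection with $a_1, a_2$ on crab line $A$ and $b$ on a different crab line $B$, and suppose all three of $a_1, a_2, b$ fail to be near, i.e., each has distance $> 42$ from $-\rho$. First I would record the basic constraints coming from exceptionality of the various sub-collections: $0, a_1, a_2$ exceptional forces $a_2 - a_1$ to be one of the 20 weights (Lemma~\ref{pts20}(1)), so in particular $\| a_2 - a_1 \| \le 3\sqrt 3$ by Corollary~\ref{LMP}; and $0, a_1, b$, $0, a_2, b$ exceptional put $b - a_1$ and $b - a_2$ on crab lines. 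Since $a_2 - a_1$ is parallel to $A$ and $b \notin A$, the weights $b - a_1$ and $b - a_2$ cannot lie on $A$, and since $(b - a_1) - (b - a_2) = a_2 - a_1$ is parallel to $A$, these two lie on distinct crab lines.

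Next I would apply the Dichotomy (Proposition~\ref{key}) to the collection $0, a_1, a_2, b$ with $\mu = a_1$. If $a_1$ is not one of the 20 weights, then $\| a_2 + \rho \| < 3 \| a_1 \|$ and $\| b + \rho \| < 3 \| a_1 \|$; similarly, reversing and translating via \eqref{transverse} to put $a_2$ or $b$ early, I would try to bound $\| a_1 \|$ in terms of the others, aiming for a bound that combined with the $\| a_2 - a_1 \| \le 3\sqrt 3$ constraint and the Trigonometry Lemma forces everything near $-\rho$ — essentially the BAA mechanism of Lemma~\ref{BAA}, except here $b$ plays the role of the far weights on two crab lines. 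The point is that $b - a_1$ and $b - a_2$ sit on two different crab lines a distance $\le 3\sqrt 3$ apart, so by Lemma~\ref{trig} both are within $\tfrac{3\sqrt3}{2(2-\sqrt3)}$ of $-\rho$; then $b$ is within that plus $\sqrt 7$ of $-\rho$, hence near, contradicting our assumption. The case analysis on whether $a_1$ (or $a_2$) is one of the 20 weights is where Proposition~\ref{key} does the real work: if $a_1$ is one of the 20, then all $\la_j$ with $\| \la_j + \rho \| > 6\sqrt 3$ lie on the crab line parallel to the singular line through $a_1$; applying this with $\la_j = a_2$ and with $\la_j = b$ simultaneously, and using that $a_2 \in A$ while $b \in B \ne A$, pins down which crab line that parallel line is and quickly yields a contradiction unless $a_2$ or $b$ is itself near.

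The main obstacle I anticipate is the bookkeeping in the sub-case where one of $a_1, a_2$ is one of the 20 weights but the parallel-crab-line conclusion of the Dichotomy is vacuous because the relevant weight happens to lie within $6\sqrt 3$ of $-\rho$ — there one must fall back on translating the collection (using \eqref{transverse} to reposition which weight comes first) and re-running the Dichotomy or the Trigonometry Lemma from a different vantage point, keeping careful track of which of $a_1, a_2, b$ is being declared near. I expect the cleanest route is: translate so that $a_1$ is the ``$\mu$'' of the Dichotomy, dispose of the ``$a_1$ not among the 20'' branch by the distance estimate above; then in the ``$a_1$ among the 20'' branch, note $a_1$ is automatically near (since the 20 weights satisfy $\| \la + \rho \| \le 3\sqrt 3 \le 42$), contradicting the assumption that $a_1$ is not near. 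That observation — that every one of the 20 weights is trivially near — may in fact collapse much of the case analysis, and I would check first whether it alone, combined with the Dichotomy applied once, already forces one of $a_1, a_2, b$ to be near.
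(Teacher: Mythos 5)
Your overall strategy --- translate, bound $\|a_2-a_1\|\le 3\sqrt{3}$ via Lemma~\ref{pts20}, and play the two differences $b-a_1$, $b-a_2$ against the Trigonometry Lemma~\ref{trig} --- is close in spirit to the paper's argument, but two steps as written do not hold up. First, the claim that $b-a_1$ and $b-a_2$ ``cannot lie on $A$'' (hence lie on distinct crab lines) does not follow from ``$a_2-a_1$ is parallel to $A$ and $b\notin A$''. If $b-a_1$ and $a_1$ both lie on $A$, what follows is only that $b+\rho$ lies on $A$ (first part of Corollary~\ref{crab.diff}), i.e.\ that $b$ lies on the line through $-2\rho$ parallel to $A$ --- not that $b\in A$. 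Excluding this genuinely uses that $b$ is non-near, via the second part of Corollary~\ref{crab.diff} (that is, Corollary~\ref{plusrho}) or an equivalent estimate; this is exactly how the paper's proof closes. Indeed, the paper establishes the opposite configuration from yours: since $b$ and each $a_j$ are non-near and on different crab lines, Lemma~\ref{trig} gives $\|b-a_j\|\ge 2(2-\sqrt{3})\cdot 42$, hence $\|b-a_j+\rho\|>19.8$, and since $\|(b-a_1)-(b-a_2)\|\le 3\sqrt{3}$, the two differences lie on the \emph{same} crab line; an orthogonality argument then identifies that line with $A$, and Corollaries~\ref{crab.diff} and \ref{plusrho} force $b\in A$, the contradiction.

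Second, even granting distinct crab lines, your concluding step is a non sequitur: from $\|b-a_j+\rho\|\le 3\sqrt{3}/(2(2-\sqrt{3}))$ you infer that $b$ is within that plus $\sqrt{7}$ of $-\rho$, hence near. But smallness of $\|b-a_j+\rho\|$ says $b$ is close to $a_j$ (which you assumed far from $-\rho$), not close to $-\rho$; writing $b+\rho=(b-a_j+\rho)+a_j$ shows the bound you want would involve $\|a_j\|$, not $\|\rho\|$. The BAA-style repair is to conclude, for at least one $j$, that $\|b-a_j\|\le 3\sqrt{3}/(2(2-\sqrt{3}))+\sqrt{7}<12.4$ and then apply Lemma~\ref{trig} a \emph{second} time to the pair $b,a_j$ (different crab lines, both more than $42$ from $-\rho$, yet closer than $2(2-\sqrt{3})\cdot 42\approx 22.5$ apart) to reach the contradiction. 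Finally, the Dichotomy fallback in your last paragraph does not rescue anything: with $\mu=a_1$ assumed non-near, option (2) of Proposition~\ref{key} gives only $\|\la_j+\rho\|<3\|a_1\|$, which is vacuous, and your (correct) remark that the 20 weights are automatically near merely disposes of the other branch --- it does not by itself make any of $a_1,a_2,b$ near.
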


\begin{proof}
For sake of contradiction, suppose all three nonzero weights are at least 42 from $-\rho$.  Translating, we find an exceptional collection
$0, a_2 - a_1, b - a_1$, where for $j  = 1,2$ we have
$|| b - a_j || > 2(2 - \sqrt{3}) 42.$ 
Further,
\[
|| b - a_j + \rho || \ge || b - a_j || - || {-\rho} || > 2(2 - \sqrt{3})42 - \sqrt{7} > 19.8.
\]
Now 
\[
|| (b - a_2) - (b - a_1) || = || a_1 - a_2|| \le 3\sqrt{3} 
< 2(2-\sqrt{3})19.8,
\]
so by the Trigonometry Lemma~\ref{trig} $b - a_2$ and $b - a_1$ lie on the same crab line.

As $a_1, a_2$ also lie on one crab line, we can find nonzero vectors $x, y$ such that $x$ is orthogonal to $b - a_j + \rho$ and $y$ is orthogonal to $a_j + \rho$ for $j = 2, 3$.  It follows that
\[
a_1 - a_2 = (a_1 + \rho) - (a_2 + \rho) = (b - a_2 + \rho) - (b - a_1 + \rho)
\]
is orthogonal to both $x$ and $y$.  As $a_1 - a_2 \ne 0$, it follows that the four weights $a_j, b - a_j$ for $j = 2, 3$ all lie on one crab line.   Corollary~\ref{crab.diff} gives that $b + \rho$ lies on this same line.  As $b$ is also on a crab line and $b$ is at least 42 from $-\rho$, Corollary~\ref{plusrho} gives that $b$ and $b + \rho$ are on the same crab line.  This contradicts the hypothesis that $a_1, b$ are on different crab lines.
\end{proof}

We now prepare for ABA, the most complicated of the three configurations.


\begin{defn}
The \linedef{mirror 20 weights} consist of the intersection of the crab
with the crab shifted to $-\rho$.  A weight $\mu$ is one of the mirror
20 weights if both $\mu$ and $\mu+\rho$ are in the crab.  The lines
through $-2\rho$ parallel to the crab lines will be called the
\linedef{mirror singular lines}.
\end{defn}

Now we need a ``mirror'' version of Proposition \ref{key}(2).

\begin{prop}
\label{mirror_farclose}
If $0,\la,\mu$ is an exceptional collection with $\|\la\| \geq
2.9\|\mu+\rho\| + 7.6$, then $\mu$ is one of the mirror 20 weights on
the mirror singular line parallel to $\la$. In particular, in this
case $\|\mu+\rho\| \leq 3\sqrt{3}$.
\end{prop}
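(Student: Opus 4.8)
The plan is as follows. Since $0,\la,\mu$ is an exceptional collection, Proposition~\ref{BWB} shows that $\la+\rho$, $\mu+\rho$ and $(\mu-\la)+\rho$ are all singular; equivalently, $\la$, $\mu$ and $\mu-\la$ all lie in the crab. The whole statement rests on one point: the hypothesis $\|\la\|\ge 2.9\|\mu+\rho\|+7.6$ forces $\la$ and $\mu-\la$ onto a common crab line. Granting this for the moment, Corollary~\ref{crab.diff} applied with $a=\la$ and $b=\mu$ (so that $b-a=\mu-\la$) shows that $\mu+\rho$ lies on that same crab line $A$; hence $\mu$ and $\mu+\rho$ both lie in the crab, so $\mu$ is one of the mirror $20$ weights by definition. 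Writing $\gamma$ for the root with $A$ perpendicular to $\gamma$, the points $\mu+\rho$ and $-\rho$ both lie on $A$, so $\mu+2\rho=(\mu+\rho)-(-\rho)$ is orthogonal to $\gamma$; since $\la+\rho$ is also orthogonal to $\gamma$, this says exactly that $\mu$ lies on the mirror singular line through $-2\rho$ parallel to $A$, i.e.\ parallel to the crab line containing $\la$. Finally $\mu+\rho$ is a weight for which both $\mu+\rho$ and $(\mu+\rho)+\rho=\mu+2\rho$ are singular, hence is one of the $20$ weights, so $\|\mu+\rho\|\le 3\sqrt3$ by the norm bound recorded just after the definition of the $20$ weights.

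It remains to show that $\la$ and $\mu-\la$ lie on a common crab line. Set $u:=\la+\rho$ and $v:=(\mu-\la)+\rho$; both are singular weights, and both are nonzero because $\|\la\|$ is large --- indeed $\|u\|\ge\|\la\|-\sqrt7$ and $\|v\|=\|\la-(\mu+\rho)\|\ge\|\la\|-\|\mu+\rho\|$, and the hypothesis makes both right-hand sides strictly positive. The key observation is that $u+v=\mu+2\rho$ is short, $\|u+v\|\le\|\mu+\rho\|+\sqrt7$, so $u$ and $-v$ are two long vectors differing by a short one. Now the singular lines of $G_2$ are six lines through the origin, any two of which meet at an angle equal to a positive multiple of $30^\circ$, so the argument of the Trigonometry Lemma~\ref{trig}, run with the common point $0$ in place of $-\rho$, shows that two points lying on distinct singular lines, each at distance $\ge R$ from $0$, are at least $2(2-\sqrt3)R$ apart. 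Take $R=\min\{\|u\|,\|v\|\}$; using $\|\la\|\ge 2.9\|\mu+\rho\|+7.6$ to bound $R$ from below, one verifies that $2(2-\sqrt3)R$ strictly exceeds $\|\mu+\rho\|+\sqrt7$, which is at least $\|u+v\|$ --- the distance between $u$ and $-v$. Since $u$ and $-v$ are each at distance $\ge R$ from $0$, they must therefore lie on one singular line; that line is perpendicular to some root $\gamma$, so $\la+\rho$ and $(\mu-\la)+\rho$ are both orthogonal to $\gamma$, which is precisely the assertion that $\la$ and $\mu-\la$ both lie on the crab line through $-\rho$ perpendicular to $\gamma$.

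I expect the delicate point to be the numerical verification in the previous paragraph: the constants $2.9$ and $7.6$ look chosen so that the Trigonometry estimate produces a contradiction uniformly for all $\|\mu+\rho\|\ge 0$ --- at $\|\mu+\rho\|=0$ the two quantities being compared are nearly equal, leaving essentially no slack. One must use the sharp bound $\|v\|=\|\la-(\mu+\rho)\|\ge\|\la\|-\|\mu+\rho\|$ and not a weaker one routed through $\|\mu\|$, which would cost an extra $\sqrt7$ and break the estimate for small $\|\mu+\rho\|$. Everything else is bookkeeping over Proposition~\ref{BWB}, Corollary~\ref{crab.diff}, the Trigonometry Lemma~\ref{trig}, and the elementary geometry of the six singular lines.
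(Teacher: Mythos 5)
Your argument is correct and is essentially the paper's own proof in translated form: the paper applies the Trigonometry Lemma~\ref{trig} to the two crab weights $\mu-\la$ and $-2\rho-\la$, whose difference is the same short vector $\mu+2\rho$ you use, with the same bounds $\|\mu-\la+\rho\|\ge 1.9\|\mu+\rho\|+7.6$ and $\|{-2\rho}-\la+\rho\|\ge \|\la\|-\sqrt{7}\ge 2.9\|\mu+\rho\|+7.6-\sqrt{7}$, and then concludes that $\mu+\rho$ lies in the crab exactly as you do via Corollary~\ref{crab.diff}. Your ``Trigonometry Lemma at the origin'' requires no separate verification, since it is Lemma~\ref{trig} translated by $\rho$ (equivalently, apply that lemma directly to the crab weights $\la$ and $\la-\mu-2\rho$, which are your $u-\rho$ and $-v-\rho$), and your tight numerical check near $\|\mu+\rho\|=0$ is the same margin the paper records as $2.9\|\mu+\rho\|+4.96$ versus $1.87\|\mu+\rho\|+4.94$.
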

\begin{proof}
As $\la$ is in the crab, $\la + \rho$ is singular, thus $-2\rho - \la
+ \rho = -\rho - \la = -(\la+\rho)$ is singular, hence $-2\rho-\la$ is
in the crab.  Also $\mu-\lambda$ is in the crab by exceptionality.
We will show that $\mu-\la$ and $-2\rho -\la$ are on the same crab
line, hence that $(\mu-\la)-(-2\rho-\la+\rho) = \mu+\rho$ is also on
the crab and thus $\mu$ is one of the mirror 20 weights.  

We have
$$
\| (\mu-\la) - (-2\rho - \la)\| = \|\mu+2\rho\| \leq \|\mu+\rho\| +\sqrt{7}
$$
by the triangle inequality.  By the Trigonometry Lemma~\ref{trig}, $\mu$ will then be
one of the mirror 20 weights as long as $\mu-\la$ and $-2\rho-\la$ are a
distance 
$$
\frac{\|\mu + \rho\| + \sqrt{7}}{2(2-\sqrt{3})}<
1.87\|\mu+\rho\|+4.94
$$ from $-\rho$.  But indeed, by hypothesis, we
have
$$
\|\mu-\la+\rho\| \geq \|\la\| - \|\mu+\rho\| \geq 1.9\|\mu+\rho\| + 7.6
$$
and
$$
\|{-2\rho}-\la+\rho\| \geq \|\la\| - \|\rho\| \geq 2.9\|\mu+\rho\| +
7.6 - \sqrt{7} > 2.9\|\mu+\rho\| + 4.96.
$$
The final claims are apparent.
\end{proof}

\begin{lem}[ABA]
\label{ABA}
Suppose that $0,a_1,b,a_2$ is an exceptional collection of far weights
such that $a_1,a_2$ are on the same crab line and $b$ is on a
different crab line.  Then the collection $0,a_1,b,a_2$ is maximal.
\end{lem}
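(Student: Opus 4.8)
The plan is to show that no exceptional collection properly contains $0,a_1,b,a_2$. Since every subsequence of an exceptional collection is again exceptional, it suffices to prove that no single weight $c$ can be inserted into $0,a_1,b,a_2$ — in any of the five available slots — so as to produce an exceptional collection of length five. Throughout I would use the moves of \eqref{transverse} to translate an enlargement to begin at $0$ and, when convenient, to pass to its reverse--negation, together with Corollary~\ref{LMP}, which says that in a collection beginning at $0$ two weights on a common crab line are within $3\sqrt3$ of each other.

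First I would record the rigidity of the given configuration. By Corollary~\ref{LMP}, $\|a_1-a_2\|\le 3\sqrt3$. Feeding the subcollections $0,a_1,b$ and $0,b,a_2$, the translate $0,b-a_1,a_2-a_1$, and their reverse--negations into Lemma~\ref{pts20} and the Dichotomy (Proposition~\ref{key}), one checks that $b-a_1$ and $a_2-b$ lie in the crab but are neither among the 20 weights nor on the crab line $A$ through $a_1$ and $a_2$; in particular no one of $a_1,b,a_2$ lies on a singular line parallel to the crab line of another, so the hypotheses of Lemma~\ref{BAA} will be available whenever a BAA-configuration arises. The Dichotomy applied to $0,a_1,b,a_2$ and to its reverse also bounds $\|b+\rho\|$ and $\|a_2+\rho\|$ above by $3\|a_1\|$ and bounds $\|b-a_1\|$, $\|a_2-b\|$ below by $(42+3\sqrt3)/3$.

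Next I would treat the endpoint slots; by \eqref{transverse} it suffices to consider $0,a_1,b,a_2,c$. If $c$ lies on crab line $A$, then $0,b,a_2,c$ is a BAA-collection, so Lemma~\ref{BAA} makes the far weight $b$ near, a contradiction. If $c$ lies on the crab line through $b$, then $0,b,c$ gives $\|b-c\|\le 3\sqrt3$, so $\|c+\rho\|>42$, while $0,a_1,a_2,c$ is an AAB-collection, so Lemma~\ref{AAB} makes $c$ near, again a contradiction. If $c$ lies on a third crab line, then $0,a_1,a_2,c$ is still AAB, so $c$ is near; one then plays the Dichotomy (Proposition~\ref{key}) and its mirror (Proposition~\ref{mirror_farclose}) against the far weights, using case~(1) of the Dichotomy or the fact that $c$ lands among the finitely many mirror 20 weights, and chains this with the Trigonometry Lemma~\ref{trig} and the finite geometry of the six crab lines to force $c$ back onto line $A$ or the line through $b$ — contradicting the standing assumption.

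Finally, the three interior slots are handled by the same template: dropping the appropriate entry exposes an AAB- or BAA-configuration as soon as $c$ shares the crab line $A$ of $a_1,a_2$ or the crab line of $b$, whence the arguments above apply. The main obstacle is the remaining possibility, with $c$ on a crab line distinct from both: none of Lemmas~\ref{BAA}, \ref{AAB}, \ref{pts20} applies directly, so one translates the relevant length-four subsequence to begin at $0$ and combines the Dichotomy and its mirror Proposition~\ref{mirror_farclose} to trap the far weights on a single crab line (or to bound their distances from $-\rho$), and then invokes the Trigonometry Lemma~\ref{trig} and Corollary~\ref{LMP} — and, in the regime where the weights are not far enough out for the metric estimates to close the argument by themselves, the discreteness of the weight lattice and the finiteness of the 20 and mirror 20 weights — to force $c$ to coincide with one of $0,a_1,b,a_2$, which is impossible since the entries of an exceptional collection are pairwise distinct.
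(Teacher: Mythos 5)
Your overall strategy is the same as the paper's---rule out every insertion slot using the BAA/AAB Lemmas, Lemma~\ref{pts20}, Corollary~\ref{LMP}, the Trigonometry Lemma, the Dichotomy and its mirror---and the easy subcases (the inserted weight $\mu$ lying on the crab line $A$ of $a_1,a_2$, or on the crab line of $b$) are dispatched essentially as in the paper. But there are two genuine gaps. The smaller one: your reduction of the endpoint slots to the single configuration $0,a_1,b,a_2,c$ via \eqref{transverse} does not work. Reverse--negating and translating $0,c,a_1,b,a_2$ produces $0,\,a_2-b,\,a_2-a_1,\,a_2-c,\,a_2$, and the underlying four-term collection $0,\,a_2-b,\,a_2-a_1,\,a_2$ is not an ABA configuration of far weights: by Lemma~\ref{pts20} the weight $a_2-a_1$ is one of the 20 weights, hence near, and $a_2-b$ need not share a crab line with $a_2$. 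So the hypotheses you invoke in your end-slot analysis are not preserved, and the insertion $0,\mu,a_1,b,a_2$ must be argued directly (as the paper does: for $\mu$ off $A$, BAA applied to $0,\mu,a_1,a_2$ forces $\mu$ onto the singular line parallel to $A$, so $\mu$ is one of the 20 weights, and then the Dichotomy applied to $0,\mu,b$ forces the far weight $b$ onto $A$, a contradiction).

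The serious gap is the heart of the lemma: an interior insertion $0,a_1,\mu,b,a_2$ or $0,a_1,b,\mu,a_2$ with $\mu$ on a crab line different from those of $a_1,a_2$ and $b$. Here you only say one should ``combine the Dichotomy and its mirror \ldots{} to force $c$ to coincide with one of $0,a_1,b,a_2$.'' That is a plan, not an argument, and the proposed endgame is not the right target: nothing in the quoted results forces coincidence of weights, and the contradiction in fact arises differently. The paper passes to $0,\,a_2-b,\,a_2-\mu,\,a_2-a_1,\,a_2$, notes that $a_2-a_1$ is one of the 20 weights, shows via the Trigonometry Lemma that $\|a_2-b\|$ is large enough to apply Proposition~\ref{mirror_farclose} to $0,a_2-b,a_2-a_1$, then uses the Dichotomy on $0,\mu,b,a_2$ (case (1) being impossible since $b,a_2$ are far and on different lines) to get $\|\mu\|>15$, hence $\|a_2-\mu\|>3\sqrt{3}$ so that $a_2-\mu$ is not one of the 20 weights, applies the Dichotomy again to $0,a_2-\mu,a_2$ and Proposition~\ref{mirror_farclose} to $0,a_2-\mu,a_2-a_1$, and finally uses $\|\mu-b\|>3\sqrt{3}$ to see that $a_2-\mu$ and $a_2-b$ lie on different crab lines, so that $a_2-a_1$ would have to lie on two distinct mirror singular lines---absurd. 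The slot between $b$ and $a_2$ needs a further split into $\mu$ far (reduce to the previous slot) and $\mu$ non-far, where one first rules out $\mu$ being one of the 20 weights before the translated argument applies. None of this quantitative bookkeeping, nor the correct shape of the final contradictions (likewise, in your last-slot third-line case the paper pins $\mu$ to $-2\rho$, which is not in the crab, rather than forcing $c$ back onto $A$ or $b$'s line), appears in your sketch, so the proposal as written does not establish maximality.
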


By \linedef{maximal} we mean that there is no exceptional collection $0, \la_2, \la_3, \la_4, \la_5$ that contains $0, a_1, b, a_2$ as a sub-collection.

\begin{proof}
We have a number of cases, each of which we will deal with by
contradiction.  

\case 1 
Consider extending the collection as $0,\mu,a_1,b,a_2$.  If
$\mu$ is on the same crab line as $a_1,a_2$, then the AAB Lemma~\ref{AAB}
applied to the exceptional collection $0,\mu,a_1,b$, together with the
fact that $a_1,b$ are far, implies that $\mu$ is near.  But this is
impossible since $\mu$ and $a_1$ can be at most $3\sqrt{3}$ apart.  If
$\mu$ is on a different crab line than $a_1,a_2$, then $0,\mu,a_1,a_2$
is exceptional, which contradicts the BAA Lemma~\ref{BAA} (since
$a_1,a_2$ are far) unless $\mu$ is on the singular line parallel to
the crab line containing $a_1,a_2$.  But $0,\mu,b$ is exceptional,
$\mu$ is one of the 20 weights, and $b$ is far, hence $b$ is on the crab line
parallel to the singular line containing $\mu$.  This is impossible
since $b$ and $a_1,a_2$ are on different crab lines.

\case 2 
Similarly, consider extending the collection as
$0,a_1,b,a_2,\mu$.  If $\mu$ is on the same crab line as $a_1,a_2$,
then the exceptionality of $0,b,a_2,\mu$ contradicts the BAA
Lemma~\ref{BAA} since $b,a_2$ are far ($b$ cannot be on the singular
line parallel to the crab line containing $a_2,\mu$ because it is
far).  If $\mu$ is on a different crab line than $a_1,a_2$, then by
the AAB Lemma~\ref{AAB} and the fact that $a_1,a_2$ are far, we have
that $\mu$ is near.  By Proposition~\ref{mirror_farclose} applied to
$0,a_1,\mu$ and $0,b,\mu$, we have that $\mu$ is one of the mirror 20
weights on the mirror singular line parallel to the crab lines
containing $a_1$ and $b$, hence $\mu = -2\rho$, contradicting the
hypothesis that $\mu$ is in the crab.

\case 3 
Now consider extending the collection as $0,a_1,\mu,b,a_2$.
If $\mu$ is on the same crab line as $a_1,a_2$, then the AAB
Lemma~\ref{AAB} applied to the exceptional collection $0,a_1,\mu,b$,
together with the fact that $a_1,b$ are far,  implies that $\mu$ is
near.  But this is impossible since $\mu$ and $a_1$ can be at most
$3\sqrt{3}$ apart.
Similarly, if $\mu$ is on the same crab line as $b$, then
the AAB Lemma~\ref{AAB} applied to the exceptional collection $0,\mu,b,a_2$, together with the
fact that $b,a_2$ are far, implies that $\mu$ is near.  But this is
impossible since $\mu$ and $b$ can be at most $3\sqrt{3}$ apart.  

Thus
$\mu$ is not on the crab line containing any of $a_1,b,a_2$.  
As in \eqref{transverse}, the collection
\[
0,\, a_2-b,\, a_2-\mu,\, a_2-a_1,\, a_2
\]
 is exceptional.  By Lemma
\ref{pts20}, $a_2-a_1$ is one of the 20 weights, in particular
$\|a_2-a_1+\rho\| \leq 3\sqrt{3}$.  Since $b,a_2$ are far and on
different crab lines, $\|b+\rho\|,\|a_2+\rho\| \geq 42 + 3\sqrt{3}$,
hence by the Trigonometry Lemma~\ref{trig}, we have
$$
\|a_2-b\| \geq 2(2-\sqrt{3})(42+3\sqrt{3}) 
> 2.9\cdot  3\sqrt{3} + 7.6. 
$$ 
Applying Proposition~\ref{mirror_farclose} to the exceptional
collection $0,a_2-b,a_2-a_1$, we have that $a_2-a_1$ is one of the
mirror 20 weights on the mirror singular line parallel to the crab line
containing $a_2-b$.  In particular, $\|a_2-a_1+\rho\| \leq \sqrt{3}$.

The first option of the Dichotomy Proposition~\ref{key} applied to the
exceptional collection $0,\mu,b,a_2$ is impossible since $b,a_2$ are
far and on different crab lines.  Hence by Dichotomy, $\|\mu\| >
\frac{1}{3}(42 + 3\sqrt{3}) > 15$.  In particular $\|\mu+\rho\| >
15-\sqrt{7}$, so by the Trigonometry Lemma~\ref{trig} (using that
$a_2$ is far), we have $\|a_2-\mu\| > 2(2-\sqrt{3})(15-\sqrt{7}) > 7 >
3\sqrt{3}$, in particular $a_2-\mu$ is not one of the 20 weights.

It follows that the second option in Dichotomy holds for $0,a_2-\mu,a_2$ 
and $42 + 3 \sqrt{3} < \|a_2+\rho\| < 3 \|a_2-\mu\|$.  But then we
have
$$
\|a_2-\mu\| > \frac{1}{3}(42+3\sqrt{3}) > 15 > 2.9\cdot \sqrt{3} + 7.6
$$ 
so that we can apply Proposition~\ref{mirror_farclose} to the exceptional
collection $0,a_2-\mu,a_2-a_1$.  We conclude that $a_2-a_1$ is one of
the mirror 20 weights on the mirror singular line parallel to the crab
line containing $a_2-\mu$.  

Since $b$ is far and $\mu$ is on a different crab line, the
Trigonometry Lemma~\ref{trig} says that 
\[
\| \mu - b \| > \frac{15 - \sqrt{7}}{2(2-\sqrt{3})} > 3 \sqrt{3},
\]
so $a_2 - \mu$ and $a_2 - b$ cannot lie on the same crab line.  
But then it is impossible for $a_2-a_1$ to be on the mirror singular
lines parallel to the crab lines of both $a_2-\mu$ and $a_2-b$.
Therefore no such $\mu$ can exist.

\case 4
Finally, consider extending the collection as $0,a_1,b,\mu,a_2$.  If
$\mu$ is far, then by interchanging the roles of $\mu$ and $b$, we can
use the previous argument.  Hence we can assume $\mu$ is not far. If
$\mu$ is on the same crab line as $a_1,a_2$, then the exceptionality
of $0,b,\mu,a_2$ contradicts the BAA Lemma~\ref{BAA} since $b,a_2$ are
far (in particular, $b$ cannot be on the singular line parallel to the
crab line containing $a_2,\mu$).
Similarly, if $\mu$ is on the same crab line as $b$, then the
exceptionality of $0,a_1,b,\mu$ contradicts BAA Lemma~\ref{BAA}.  Thus
$\mu$ is not on the crab line containing any of $a_1,b,a_2$.  If $\mu$
is one of the 20 weights, then we can apply Proposition
\ref{mirror_farclose} to the exceptional collections $0,a_1,\mu$ and
$0,b,\mu$ (since $42+3\sqrt{3} \geq 2.9 \cdot 3\sqrt{3}+7.6$),
concluding that $\mu$ is also one of the mirror 20 weights contained on
the mirror singular lines parallel to crab lines containing $a_1,b$,
which is impossible.  Hence as before, the Dichotomy Proposition
\ref{key} implies that $\|a_2-\mu\| > 7$.  As in \eqref{transverse}, the collection
$0,a_2-\mu,a_2-b,a_2-a_1,a_2$ is exceptional and we can use the
previous argument.  

We have thus ruled out all possible exceptional
extensions of $0,a_1,b,a_2$.
\end{proof}

Combining the Lemmas \ref{BAA}, \ref{AAB}, and \ref{ABA} gives the following:

\begin{prop}
\label{triplet} 
Suppose that $a_1, a_2, b$ are far weights such that $a_1, a_2$ lie on the
same crab line and $b$ lies on a different crab line.  Then:
\begin{enumerate}
\item Neither $0, b, a_1, a_2$ nor $0, a_1, a_2, b$ are exceptional collections.
\item If $0, a_1, b, a_2$ is an exceptional collection, then it is maximal.$\hfill\qed$
\end{enumerate}
\end{prop}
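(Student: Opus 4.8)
The plan is to derive Proposition~\ref{triplet} as a purely formal consequence of the three configuration Lemmas~\ref{BAA} (BAA), \ref{AAB} (AAB), and \ref{ABA} (ABA), which were set up precisely to cover the three orderings of three weights distributed as ``two on one crab line, one on another.''  The only input needed beyond citing these lemmas is the observation that the standing hypothesis ``$b$ is far'' automatically supplies the extra hypothesis of Lemma~\ref{BAA}, namely that $b$ does not lie on the singular line parallel to the crab line containing $a_1, a_2$.

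To see this, I would argue as follows.  If a weight $\la$ lies in the crab and also lies on some singular line, then both $\la$ and $\la + \rho$ are singular, so $\la$ is one of the 20 weights and therefore $\|\la + \rho\| \le 3\sqrt{3}$.  Since a far weight satisfies $\|\la + \rho\| > 42 + 3\sqrt{3}$, no far weight can lie on a singular line at all; in particular the far weight $b$ lies on no singular line, and by hypothesis it lies on a crab line different from that of $a_1, a_2$.  Thus $b$ satisfies the hypotheses of Lemma~\ref{BAA}.

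Part~(1) now follows at once.  If $0, b, a_1, a_2$ were exceptional, Lemma~\ref{BAA} would force $b, a_1, a_2$ to be within $21.1$ of $-\rho$, i.e.\ to be near, contradicting that they are far.  If $0, a_1, a_2, b$ were exceptional, Lemma~\ref{AAB} would force at least one of $a_1, a_2, b$ to be near, again a contradiction.  For part~(2), the hypothesis that $0, a_1, b, a_2$ is an exceptional collection of far weights with $a_1, a_2$ on one crab line and $b$ on a different one is exactly the hypothesis of Lemma~\ref{ABA}, whose conclusion is that this collection is maximal.

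I do not anticipate any real obstacle in this step: all of the trigonometry, the Dichotomy Proposition, and the mirror-$20$-weights analysis have already been spent in proving Lemmas~\ref{BAA}--\ref{ABA} (the subtle case being Case~3 of Lemma~\ref{ABA}), so assembling Proposition~\ref{triplet} amounts to matching hypotheses and invoking the lemmas, together with the one-line remark that ``far'' rules out lying on a singular line.
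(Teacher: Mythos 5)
Your proposal is correct and matches the paper's own treatment, which simply states that the proposition follows by combining Lemmas~\ref{BAA}, \ref{AAB}, and \ref{ABA}. Your extra remark that a far weight in the crab cannot lie on a singular line (since it would then be one of the 20 weights, all of which satisfy $\|\la+\rho\|\le 3\sqrt{3}$) is exactly the right way to check the additional hypothesis of Lemma~\ref{BAA}, which the paper leaves implicit.
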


\section{Computer calculations}

Our proof of Theorem \ref{G2} makes use of the following concrete facts, which can be easily verified by computer:
\begin{fact} \label{noDMZ}
Every exceptional collection $0, \la_2, \ldots, \la_n$ with all $\la_j$ non-far has  $n \le 10$.
\end{fact}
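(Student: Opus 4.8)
The plan is to reduce the statement to a completely finite search over the $G_2$ weight lattice and then to describe how that search can be organized so as to terminate quickly.

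First I would carry out the finite reduction. Since $\la_1 = 0$ and every two-term subsequence of an exceptional collection is again exceptional, the pair $0, \la_j$ is exceptional, so Proposition~\ref{BWB} forces $\la_j + \rho$ to be singular for each $j$; that is, every $\la_j$ lies on one of the six crab lines. Combined with the non-far hypothesis $\|\la_j+\rho\| \le 42 + 3\sqrt3$, this confines the entire collection to the explicit finite set
\[
S \;=\; \{0\}\;\cup\;\bigl\{\,\la\in\Lambda \;:\; \la+\rho \text{ is singular and } \|\la+\rho\|\le 42+3\sqrt3\,\bigr\},
\]
consisting of the origin together with the few hundred lattice points lying on the six crab lines within $42+3\sqrt3$ of $-\rho$, all of which can be listed explicitly. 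By Proposition~\ref{BWB} once more, a sequence $0=\la_1,\la_2,\dots,\la_n$ of distinct elements of $S$ is an exceptional collection precisely when $\la_j - \la_i + \rho$ is singular for all $i < j$. Hence the Fact becomes the assertion that, on the directed graph with vertex set $S$ and an edge $\la\to\mu$ whenever $\la\ne\mu$ and $\mu-\la+\rho$ is singular, no sequence $0=\la_1,\dots,\la_n$ with $\la_i\to\la_j$ for all $i<j$ has $n > 10$.

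Next I would run that search by backtracking: build the collection one weight at a time, maintaining at each stage the set of admissible continuations $\{\mu\in S : \la_i\to\mu \text{ for every } \la_i \text{ already selected}\}$, and prune aggressively. The key pruning is Corollary~\ref{LMP}: any two weights of the collection lying on the same crab line are within $3\sqrt3$ of each other, so each of the six crab lines can contribute only a short block of consecutive lattice points; as a result the admissible sets collapse fast and the search tree stays small. (The Trigonometry Lemma~\ref{trig}, Lemma~\ref{pts20}, and the Dichotomy Proposition~\ref{key} supply further filters, though none is needed to make the enumeration finish.) The search returns a maximum length of $10$, proving $n\le 10$. For context, the value $10$ is already realized inside $S$: the five line bundles of Kapranov's exceptional collection on the $5$-dimensional quadric $Y$ pull back along the $\P^1$-bundle $X \to Y$ to an exceptional collection of $10$ manifestly non-far line bundles on $X$, as in the example preceding Theorem~\ref{G2}; so the bound is sharp.

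The main obstacle is structural rather than computational: the exceptionality relation on weights is neither transitive nor antisymmetric --- $\la_j - \la_i + \rho$ can be singular for several pairs $i<j$ with no comparabilities induced among the $\la_j$ themselves --- so we are not computing a longest path but a maximum transitively orderable subtournament, a problem with no shortcut in general. What rescues the computation here is precisely the geometry already developed above: Corollary~\ref{LMP} bounds, crab line by crab line, how many collinear weights any single collection can contain, which keeps the branching of the search bounded and the whole enumeration small enough to be certified by short, independently re-runnable code. The only remaining point of care is the usual one for computer-assisted arguments: exhibiting $S$ and the search procedure explicitly enough to be checked.
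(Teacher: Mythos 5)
Your proposal is correct and matches the paper's own treatment: the paper also reduces Fact~\ref{noDMZ} to a finite backtracking enumeration over the (445) non-far crab weights, using the criterion of Proposition~\ref{BWB} that $\la_j-\la_i+\rho$ be singular for all $i<j$, and verifies by computer (Mathematica) that all maximal such collections have length at most $10$. Your added pruning via Corollary~\ref{LMP} is a harmless optimization; otherwise the argument is the same.
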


\begin{fact} \label{forty}
Let $A$ be a crab line and $S$ the set of weights on the
union of the singular line and the mirror singular line parallel to $A$.  Then
every exceptional collection $0, \la_2, \ldots, \la_n$ of weights in
$S$ has $n \ge 5$.  (Note that as the $\la_j$'s belong to the crab, they are all selected from the union of the 20 weights and the mirror 20 weights.)
\end{fact}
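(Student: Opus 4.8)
The plan is to turn Fact~\ref{forty} into a finite, purely combinatorial check and carry it out by machine. By Proposition~\ref{BWB}\eqref{BWB.wts}, a list $0, \la_2, \ldots, \la_n$ of integral weights is an exceptional collection if and only if $\la_j - \la_i + \rho$ is singular for every $i < j$, and here ``singular'' simply means $\alpha^\vee(\la_j - \la_i + \rho) = 0$ for one of the six roots $\alpha$ of $G_2$. So no geometry is involved beyond the bookkeeping already in place: fix coordinates in which the simple roots, fundamental weights, and $\rho$ of $G_2$ are explicit (say $\alpha_1 = (1,0)$ and $\alpha_2 = (-3/2, \sqrt{3}/2)$ as in Figure~\ref{20points.fig}), and fix a crab line $A$; there are only six of these, so one may run the check once for each.

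The first substantive step is to write down $S$ explicitly. For $j \ge 2$ the weight $\la_j$ must lie in the crab, since $0, \la_j$ is exceptional, hence it lies on one of the five crab lines meeting the singular line parallel to $A$ or one of the five meeting the mirror singular line parallel to $A$; in either case $\la_j$ is one of the $20$ weights or one of the mirror $20$ weights. As recorded in the excerpt these satisfy $\|\la\| \le 3\sqrt{3}$ and $\|\la+\rho\| \le 3\sqrt{3}$ (respectively $\|\mu+\rho\| \le 3\sqrt{3}$ in the mirror case), so $S$ is a short explicit list of lattice points: the weight $0$ together with the handful of $20$ weights lying on the singular line parallel to $A$ and the handful of mirror $20$ weights lying on the parallel mirror singular line. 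Concretely, one intersects each crab line with each of the two lines parallel to $A$ and keeps only those intersection points that genuinely lie in $\Lambda$.

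Next I would form the exceptionality digraph on $S$: draw an arrow $\la \to \mu$ exactly when $\mu - \la + \rho$ is singular. An ordered list drawn from $S$ is then an exceptional collection precisely when every earlier-to-later pair carries the forward arrow; since this relation is not symmetric, the question is which subsets of $S$ admit a linear order all of whose forward pairs are arrows, not merely which subsets are closed under the relation. As $S$ has only about ten elements, one enumerates all such orderable subsets, retains those that cannot be enlarged, and reads off their cardinalities. The content of the Fact is that every such maximal exceptional collection of weights of $S$ has at least five members; running this for each of the six crab lines $A$ completes the verification.

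The step I expect to be delicate is not the enumeration, which is trivial once set up, but producing $S$ correctly: one must remember to include $0$ as the forced first weight, must retain only the crab-line/singular-line intersection points that actually lie in the weight lattice (not all of them do), and must use the correct normalization distinguishing the long and short roots of $G_2$ so that the singularity test and the list of $20$ weights come out right, with the analogous care for the mirror picture. Once $S$ and the digraph are correct, the rest is a routine finite computation, which is precisely why the statement is recorded as a computer-checked Fact.
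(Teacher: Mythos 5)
Your method is the paper's method: Fact~\ref{forty} is one of the statements the paper explicitly verifies by machine, using exactly the reduction you describe --- Proposition~\ref{BWB} turns exceptionality into the condition that all differences plus $\rho$ are singular, and the authors enumerate maximal exceptional collections starting at $0$ with a Mathematica routine (\texttt{FindCollections}) and then restrict attention to the relevant weights. Your variant of first building $S$ (the lattice points of the crab on the two lines parallel to $A$, i.e.\ the relevant $20$ weights and mirror $20$ weights, together with $0$) and enumerating inside it is an equivalent, if anything cleaner, implementation, and your cautions about lattice-point membership and the $G_2$ normalization are the right ones.

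The one genuine problem is the assertion you end up checking. As printed, ``$n \ge 5$'' cannot be the intended statement --- the one-term collection $0$, or $0,\la_2$ with $\la_2$ any of the $20$ weights on the singular line, already violates it --- and your repair (``every \emph{maximal} exceptional collection of weights of $S$ has at least five members'') is not the repair the paper needs. The way Fact~\ref{forty} is applied shows the inequality is a typo for $n \le 5$: in Proposition~\ref{threep} it is combined with Lemma~\ref{maxpts} (at most $5$ weights on $A$) to cap the total length at $1+5+4=10$, and in the proof of Theorem~\ref{G2} an exceptional collection of at least $9$ weights lying in $S$ is said to ``contradict Fact~\ref{forty}''; neither step is possible with a lower bound on maximal collections. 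So the quantity to read off from your enumeration is the \emph{maximum} length of an exceptional collection $0,\la_2,\ldots,\la_n$ with all $\la_j$ in $S$ (equivalently, that at most four nonzero weights of $S$ can ever occur after the initial $0$), not the minimum size of a maximal one. The computation you describe would surface this number in any case, but as written your proposal certifies a different claim from the one the paper proves and uses.
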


\begin{fact} \label{close}
If $0, \la_2, \ldots, \la_n$ is an exceptional collection with $n = 9$ or 10, with all weights non-far, with all crab lines containing at most 2 weights, and with one crab line containing no weights, then $\| \la_j + \rho \| \le 5$ for all $j = 2, \ldots, n$.
\end{fact}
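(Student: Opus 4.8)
The plan is to verify Fact~\ref{close} the same way as the other two facts of this section, by an exhaustive but heavily pruned finite search; a ``proof'' here means exhibiting and running a short, transparent program. First I would pass, via Proposition~\ref{BWB} and the reductions in \eqref{transverse}, to the purely combinatorial picture: a list of weights $0 = \la_1, \la_2, \ldots, \la_n$ of $G_2$ with $\la_j - \la_i + \rho$ singular for all $i < j$. Taking $i = 1$ shows that every $\la_j$ with $j \ge 2$ lies on the crab, i.e.\ on one of the six crab lines; the hypothesis that the weights are non-far means $\| \la_j + \rho \| \le 42 + 3\sqrt{3}$, so each $\la_j$ belongs to the finite set $C$ of weight-lattice points lying on the six crab lines within distance $42 + 3\sqrt{3}$ of $-\rho$. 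Enumerating $C$ --- a few hundred points, obtained by solving $\alpha^{\vee}(\la) = -\alpha^{\vee}(\rho)$ over the weight lattice for each positive root $\alpha$ and discarding those too far from $-\rho$ --- is the first concrete step. If $-\rho$ itself occurs among the weights it already satisfies $\|-\rho + \rho\| = 0 \le 5$, so I would exclude it from the outset.

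Next I would feed in the structural constraints that keep the search tree small. Since one crab line carries no weights and each of the others carries at most $2$ of the $n - 1 \le 9$ nonzero weights, when $n = 10$ exactly five crab lines carry weights (four or five when $n = 9$), each of them carrying one or two. Corollary~\ref{LMP} forces the two weights on any shared crab line to lie within $3\sqrt{3}$ of one another, and Lemma~\ref{pts20}(1) forces their difference to be one of the twenty weights and to lie on the singular line parallel to that crab line. With these in hand I would run a depth-first backtracking search: build the list greedily starting from $\la_2$; at each stage the next weight must lie in $C$, must satisfy that $\la_{k+1} - \la_i + \rho$ is singular for every $i \le k$, and must respect the crab-line occupancy counter. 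Each such singularity condition confines $\la_{k+1}$ to a translate of the union of the six singular lines, so after two or three weights have been placed only a handful of candidates survive and the tree stays tiny. Whenever a complete list of length $9$ or $10$ is produced I would check that $\| \la_j + \rho \| \le 5$ for every $j$; the content of Fact~\ref{close} is that this check never fails.

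The step needing real care is not the arithmetic inside the loop but the argument that the search is simultaneously exhaustive and genuinely small, i.e.\ that it terminates quickly. That feasibility is underwritten by the earlier geometric lemmas: the Dichotomy Proposition~\ref{key}, the Trigonometry Lemma~\ref{trig} and Corollary~\ref{crab.diff} together force a weight placed far from $-\rho$ early in the list either to pin all subsequent far weights onto a single crab line --- whereupon the ``at most two per line'' bound kills the branch almost at once --- or to confine the remaining weights to within $3\|\la_2\|$ of $-\rho$, after which non-farness collapses what is left. Hence the only branches that can reach depth $9$ or $10$ are those in which the weights stay clustered near $-\rho$, and Fact~\ref{close} merely records that the radius of that cluster is at most $5$. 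As with Facts~\ref{noDMZ} and \ref{forty}, the ultimate justification is the program itself: brute-force enumeration over $C$ with the singularity conditions and the occupancy counter as the only filters, with no heuristic shortcuts, so that exhaustiveness is manifest.
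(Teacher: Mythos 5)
Your plan is essentially the paper's own proof: Fact~\ref{close} is verified there by an exhaustive finite computer search over the non-far weights of the crab, with exceptionality tested via the singularity criterion of Proposition~\ref{BWB} and the structural hypotheses imposed as filters, exactly as in your backtracking scheme (the paper runs a Mathematica routine generating all maximal exceptional collections of non-far crab weights and then selects those meeting the stated criteria). One small caveat: your stated reason for discarding $-\rho$ (that it trivially satisfies the bound) is not by itself sufficient, since collections containing $-\rho$ would then have their \emph{other} weights unchecked; the exclusion is nevertheless harmless because $-\rho$ lies on all six crab lines, so no collection containing it can have a crab line with no weights, and hence none satisfies the hypotheses of the fact.
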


We used Mathematica 8.0.4 to check these facts.  We first wrote a function \texttt{IsSingular} that returns \texttt{True} if a weight is singular and \texttt{False} otherwise.  With the following code, and lists of weights \texttt{L1} and \texttt{L2}, the command \texttt{FindCollections[L1, L2]} will fill the global variable \texttt{collections} with a list of all of the maximal exceptional collections that begin with \texttt{L1} and such that all weights following \texttt{L1} come from \texttt{L2}.  In the code, \texttt{rho} denotes the highest root written in terms of the fundamental weights.  We omit the sanity checks that ensure that for the initial values of \texttt{L1} and \texttt{L2}, appending each element of \texttt{L2} to \texttt{L1} results in an exceptional collection.
\begin{verbatim}
collections = {}; 
FindCollections[L1_, L2_] := Module[{tmpL1},
   If[Length[L2] == 0, AppendTo[collections, L1],
    Do[
     tmpL1 = Append[L1, L2[[i]]];
     FindCollections[tmpL1, 
      Select[Delete[L2, i], IsSingular[# - L2[[i]] + rho] &]], 
      {i, 1, Length[L2]}]]];
\end{verbatim}

For example, to check Fact~\ref{noDMZ}, we constructed the list \texttt{L2} consisting of all 445 non-far weights in the crab and executed \texttt{FindCollections[\{\{0, 0\}\}, L2]} to obtain the list of the $160,\!017$ maximal exceptional collections $0, \la_2, \ldots, \la_n$ with all $\la_j$ non-far.  With this list in hand, it is not difficult to select out collections meeting the criteria of Facts~\ref{forty} and \ref{close}.
\section{Bounding exceptional collections}

This section will complete 
the proof of Theorem~\ref{G2}.

\begin{lem} \label{maxpts}
In any exceptional collection $0, \la_2, \ldots, \la_n$, at most $5$ of the $\la_j$'s lie on any given crab line.
\end{lem}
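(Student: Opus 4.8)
The plan is to fix a crab line $A$, observe that the members of the collection lying on $A$ are severely constrained by Lemma~\ref{pts20}(1), and then finish with an elementary combinatorial count. First I would let $S$ be the singular line through the origin parallel to $A$ and list the weights among $\la_2,\ldots,\la_n$ that lie on $A$ as $p_1,p_2,\ldots,p_m$, in the order in which they occur in the collection (the weight $0$ heading the collection lies on no crab line, so it is not among the $p_i$). For each $k<l$ the sub-collection $0,p_k,p_l$ is exceptional, so Lemma~\ref{pts20}(1) shows that $p_l-p_k$ is one of the 20 weights and lies on $S$. Since any two weights on $A$ differ by an element of the rank-one lattice $\Lambda\cap\R u$, where $u$ spans the direction of $A$, and the 20 weights on $S$ lie in that same lattice, I would pick a generator $v$ of it, write $p_i=p_1+(x_i-x_1)v$ with $x_i\in\Z$, and let $C\subset\Z\setminus\{0\}$ be the set of coordinates of the 20 weights on $S$. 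Then $x_l-x_k\in C$ for all $k<l$; in particular $x_2-x_1,\ldots,x_m-x_1$ are $m-1$ distinct elements of $C$, so $m\le|C|+1$.

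Next I would determine $C$ in each of the six cases. A direct computation — equivalently, inspection of Figure~\ref{20points.fig} — shows that each singular line carries at most five of the 20 weights, and that only one of the six, call it $S_0$, carries exactly five, the others carrying at most four. If $A$ is not parallel to $S_0$ this already gives $|C|\le4$ and $m\le5$. So suppose $A\parallel S_0$ and, for contradiction, $m=6$; then $|C|=5$ and $\{x_2-x_1,\ldots,x_6-x_1\}=C$, and after replacing each $x_i$ by $x_i-x_1$ (the coordinate being defined only up to a common additive constant) we may assume $x_1=0$ and $\{x_2,\ldots,x_6\}=C$. Taking $v=\alpha_1+\alpha_2$ for the generator of $\Lambda\cap S_0$, the computation gives $C=\{1,-1,-2,-3,-5\}$. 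Here $1$ is the unique positive element of $C$, and $1-c\in\{2,3,4,6\}$ is disjoint from $C$ for every $c\in C$ with $c\ne1$; hence $1$ cannot occur later in the list than any other $x_k$ (if $x_r=1$ with $r>2$ then $x_r-x_2=1-x_2\notin C$), so $x_2=1$. But then for each $l\in\{3,4,5,6\}$ the requirement $x_l-1\in C$ forces $x_l\in\{-1,-2\}$, which is impossible since $x_3,x_4,x_5,x_6$ are four distinct elements of $\{-1,-2,-3,-5\}$. This contradiction yields $m\le5$ in the remaining case as well.

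I expect the only non-routine ingredient to be pinning down the distribution of the 20 weights among the six singular lines — in particular, that exactly one of them contains five, so that the explicit combinatorial check is needed only once; everything else is bookkeeping with Lemma~\ref{pts20}(1) and the elementary difference argument above. If one preferred to avoid identifying the exceptional line by hand, one could instead feed the finite list of the 20 weights to the computer and check directly that no exceptional collection of six weights lies on a single crab line, in the spirit of the facts in the "Computer calculations" section.
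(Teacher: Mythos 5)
Your proposal is correct and follows essentially the same route as the paper: apply Lemma~\ref{pts20}(1) to see that differences of collection members on a fixed crab line are among the 20 weights on the parallel singular line, note that every singular line carries at most four of them except the one in the $120$\deg\ direction (your $S_0$, with five), and rule out six weights on that line by a further difference argument. The only cosmetic difference is that you verify the exceptional case by explicit integer coordinates ($C=\{1,-1,-2,-3,-5\}$, which is indeed the correct set), whereas the paper reads the same contradiction off Figure~\ref{20points.fig}.
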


\begin{proof}
Fix a crab line of interest and let  $0, \la_2, \la_3, \ldots, \la_n$
be an exceptional collection.  By restricting to a sub-list, we may assume that all of the weights $\la_2, \ldots, \la_n$ lie on that crab line.
By Corollary~\ref{pts20}, $\la_j - \la_2$ is one of the 20 weights for $j = 3, \ldots, n$,
and Figure \ref{20points.fig} shows that $\la_3, \ldots, \la_n$ has length at most 5 corresponding to having 6 weights on the line of interest, and that the proof is complete except in the case where the crab line makes a 120\deg\ angle with the horizontal.

For that line, we must argue that $\la_3 - \la_2, \ldots, \la_n - \la_2$ cannot be the 5 weights of the 20 depicted in the figure.  Indeed, if they were, one could translate by $\la_3 - \la_2$ to transform this to an exceptional collections $0, \la_4 - \la_3, \ldots, \la_7 - \la_3$ and $\la_j - \la_3$ must belong to the crab for $j \ge 4$.  But we can see from the figure that this does not happen for any of the 5 choices for $\la_3 - \la_2$, hence the claim.
\end{proof}

\begin{prop} \label{threep}
If $0, \la_2, \ldots, \la_n$ is an exceptional collection containing at least $3$ weights on some crab line $A$, then $n \le 10$ and all the weights off $A$ are near.
\end{prop}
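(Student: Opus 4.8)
The plan is to push everything back onto the configuration results of the previous two sections. Using \eqref{transverse} we may assume $\la_1 = 0$, and by Lemma~\ref{maxpts} the crab line $A$ carries at most five of the weights; label them $c_1,\dots,c_m$ with $3 \le m \le 5$, in the order in which they occur in the collection. If $0$ is the only weight off $A$, then $n = m+1 \le 6$ and there is nothing to prove, so we may assume some $\la_j$ lies off $A$; since $0,\la_j$ is exceptional, $\la_j$ lies in the crab, hence on a crab line other than $A$.

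The first step is to show that every weight $b$ off $A$ is near. Since there are at least three weights on $A$, a pigeonhole argument on the position of $b$ shows that either at least two of the $c_i$ follow $b$ in the collection, or at least two precede it (and then at most one follows). In the first case, taking the two $A$-weights $c_i,c_{i+1}$ that come immediately after $b$ gives an exceptional collection $0,b,c_i,c_{i+1}$ of type BAA, and Lemma~\ref{BAA} forces $b$ to be near unless $b$ lies on the singular line parallel to $A$; but in that situation $b$ is both in the crab and singular, hence one of the 20 weights, so $\|b+\rho\| \le 3\sqrt3$ and $b$ is near anyway. In the second case, taking the two $A$-weights immediately before $b$ gives $0,c_i,c_{i+1},b$ of type AAB, and Lemma~\ref{AAB} only tells us that one of the three is near. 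To finish I would split on whether some $c_i$ is far: if so, then by Corollary~\ref{LMP} all the $A$-weights lie more than $42$ from $-\rho$, so the AAB conclusion forces $b$ itself to be near; if no $c_i$ is far, I would rule out a non-near $b$ by a closer analysis of this ``$b$ late in the collection'' configuration, applying the Dichotomy Proposition~\ref{key} to a suitable early weight, Proposition~\ref{mirror_farclose} to collections of the form $0,c_i,b$, and Proposition~\ref{triplet}, with Fact~\ref{forty} used to limit how many weights can occupy the singular and mirror singular lines parallel to $A$. I expect this ``no far $A$-weight, $b$ not near'' sub-case — the one where only the weak AAB conclusion is directly at hand and the near/far gap must be bridged — to be the main obstacle.

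Granting that every weight off $A$ is near, the bound $n \le 10$ should follow by a short case split. If no $c_i$ is far, then every weight of the collection is non-far, and Fact~\ref{noDMZ} gives $n \le 10$ immediately. If some $c_i$ is far, then the positional analysis of the previous paragraph forces every off-$A$ weight to occur after $c_{m-1}$; applying the Dichotomy Proposition to the first off-$A$ weight (which is near, and may be assumed not to be one of the 20 after disposing of the contrary case as above) confines all remaining off-$A$ weights to a bounded neighbourhood of $-\rho$, and counting these together with the at most five weights on $A$ (and the at most one remaining $A$-weight $c_m$) yields $n \le 10$.
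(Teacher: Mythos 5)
Your handling of a weight $b$ that precedes two $A$-weights agrees with the paper's, but the sub-case you yourself flag as the main obstacle is a genuine gap, and the AAB Lemma is not the tool that closes it. The paper treats ``$b$ follows two $A$-weights $a_1,a_2$'' uniformly, with no split on whether the $A$-weights are far: translate by $-a_1$ to get the exceptional collection $0,\,a_2-a_1,\,b-a_1$. By Lemma~\ref{pts20}, $a_2-a_1$ is one of the 20 weights and lies on the singular line parallel to $A$, so the Dichotomy Proposition~\ref{key} applied to this shifted collection leaves exactly two possibilities: either $\|b-a_1+\rho\|\le 6\sqrt{3}$, in which case the Trigonometry Lemma~\ref{trig} (using that $b$ and $a_1$ lie on different crab lines) together with the triangle inequality gives $\|b+\rho\|<42$; or $b-a_1$ lies on $A$, in which case Corollary~\ref{crab.diff} forces $\|b+\rho\|\le 7.7$ because $b$ itself is off $A$. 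Either way $b$ is near, and the ``no far $A$-weight'' analysis you could not complete never arises. Note also that the tools you propose for that sub-case cannot work as stated: Proposition~\ref{mirror_farclose} applied to $0,c_i,b$ requires $\|c_i\|\ge 2.9\|b+\rho\|+7.6$, which fails when the $A$-weights are non-far and $b$ is supposed non-near, and Dichotomy applied to an early weight of the original collection that is not one of the 20 weights only yields a bound of the form $3\|\mu\|$, far weaker than nearness.

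The length bound in your final paragraph also has problems. When some $A$-weight is far it is not true that every off-$A$ weight must occur after $c_{m-1}$: the BAA Lemma~\ref{BAA} allows an off-$A$ weight to precede two $A$-weights provided it lies on the singular line parallel to $A$ (it is then one of the 20 weights), and such weights genuinely enter the paper's count. Moreover, confining the remaining weights to ``a bounded neighbourhood of $-\rho$'' does not by itself bound how many of them an exceptional collection can contain. The paper's count goes as follows: with all $A$-weights non-near (by Corollary~\ref{LMP}), an off-$A$ weight preceding $a_2,a_3$ is one of the 20 weights on the singular line parallel to $A$, while one following $a_1,a_2$ satisfies $\|b+\rho\|\le 7.7$ (the other Dichotomy option being incompatible with $\|a_1+\rho\|>42$) and hence, by Proposition~\ref{mirror_farclose} applied to $0,a_1,b$, is one of the mirror 20 weights on the mirror singular line parallel to $A$; then Fact~\ref{forty} bounds the subcollection formed by $0$ and the off-$A$ weights by $5$, which together with Lemma~\ref{maxpts} gives $n\le 10$.
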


\begin{proof}
In the exceptional collection $0, \la_2, \ldots, \la_n$, let
$a_1,a_2,a_3$ be three weights on $A$.  Then every nonzero weight
$b$ in the collection and off $A$ either precedes $a_2,a_3$ or follows $a_1,a_2$.

First suppose $b$ precedes $a_2,a_3$.  Then by the BAA Lemma~\ref{BAA},
either $b,a_2,a_3$ are all near or $b$ is on the singular line
parallel to $A$.  In the latter case, $b$ is one of the 20 weights and so is near.

Suppose that $b$ follows $a_1, a_2$, then shifting by $-a_1$ gives an
exceptional collection $0, a_2 - a_1, b - a_1$ where $a_2 - a_1$ is
one of the 20 weights.  By the Dichotomy Proposition~\ref{key}, we
have two possibilities:

\case{1} 
We could have that $\| b - a_1 + \rho \| \le 6\sqrt{3}$, but in that case we find that
\[
\| b - a_1 \| - \| \rho \| \le \| b-a_1 + \rho \| \le 6\sqrt{3},
\]
hence $\| b - a_1 \| \le 6\sqrt{3} + \sqrt{7}$.  But $b$ and $a_1$ lie
on different crab lines, so by the Trigonometry Lemma~\ref{trig}, we
find that $\min \{ \| b + \rho \|, \| a_1 + \rho \| \}$ is at most
$(6\sqrt{3} + \sqrt{7})/(2(2-\sqrt{3})) < 25$.  If  $\| a + \rho \| < 25$, then
\[
\| b + \rho \| \le \| a_1 + \rho \| + \| b - a_1 \| < 25 + 6\sqrt{3} + \sqrt{7} < 42
\]
and $b$ is near.  (Note that if $a_1$ is non-near, then we would have $\| b + \rho \| < 25$ and this case is impossible.)

\case{2} 
Alternately, $b-a_1$ could lie on $A$.  In that case, as $b$ is not on
$A$, Corollary~\ref{crab.diff} gives that $\| b + \rho \| \le 7.7$.  
Thus all weights in the exceptional collection off $A$ are near.

\smallskip

It remains to argue that the collection has length $\le 10$.  If the
weights on $A$ are non-far, then all the weights in the collection are
non-far and we are done by Fact~\ref{noDMZ}.  Therefore, we may assume
that some of the weights on $A$ are far, hence all weights on $A$ are
non-near.  Let $b$ be a nonzero weight in the collection that is off
$A$.  If it precedes $a_2,a_3$, then $b$ is one of the 20 weights
(because $b,a_2,a_3$ cannot all be near).  Otherwise, $b$ comes after
$a_1,a_2$ and we are in Case 2 above, so $2.9 \| b + \rho \| + 7.6 \le 29.93$; by Proposition~\ref{mirror_farclose}, $b$ is one of the mirror 20 weights and lies on the mirror singular line parallel to $A$.  Fact~\ref{forty} and Lemma \ref{maxpts} show that one cannot obtain an exceptional collection of length $> 10$.
\end{proof}


We can now conclude the proof of the main theorem.

\begin{proof}[Proof of Theorem~\ref{G2}]
For sake of contradiction, we suppose we are given an exceptional
collection $0, \la_2, \ldots, \la_{11}$.  By
Proposition~\ref{threep}, no crab line contains more
than 2 weights.

We claim that the number $F$ of far weights in the collection is 1 or 2.  Indeed, by Fact~\ref{noDMZ}, $F$ is positive.  Suppose that it is at least 3.  Then by Proposition~\ref{triplet}, all far weights lie on different crab lines, leaving $6 - F$ crab lines for the remaining $10-F$ nonzero weights; but the remaining crab lines can only hold $12-2F$ weights, which contradicts our hypothesis that $F \ge 3$; hence $F = 1$ or 2.  

We will now pick a crab line $A$ and a subset $S$ of $\la_2, \ldots, \la_{11}$ containing all the far weights and all the weights on $A$, and such that $|S| = 1$ or 2.

\case{$F = 1$}  If $F = 1$, we take $A$ to be the crab line containing the far weight and let $S$ be the set of $\la_j$'s lying on $A$; by hypothesis $|S| \le 2$.

\case{$F = 2$} If both of the far weights are on one crab line, then we take it to be $A$ and $S$ to be the set of far weights.

Otherwise, the two far weights are on different crab liens.  We claim that one of these crab lines, call it $A$, contains exactly one weight from the exceptional collection.  Indeed, otherwise there would be two crab lines each containing two weights; as all of these are non-near by Corollary \ref{LMP}, this contradicts Proposition \ref{triplet}, verifying the claim.  We take $S$ to be the far weights in the exceptional collection.

\smallskip We have found $S$ as desired, and deleting it from
the exceptional collection leaves one as in Fact \ref{close} and we
conclude that $\| \la_j + \rho \| \le 5$ for all $\la_j$ not in $S$.
If such a $\la_j$ precedes one of the far weights, then it is one of
the 20 weights by the Dichotomy Proposition~\ref{key}; if it follows
one of the far weights then it is one of the mirror 20 weights by
Proposition~\ref{mirror_farclose}.  But deleting $S$ from our
exceptional collection leaves an exceptional collection starting with
0 and containing at least $8$ nonzero, non-far weights all lying off
$A$, which contradicts Fact \ref{forty}.
\end{proof}

\providecommand{\bysame}{\leavevmode\hbox to3em{\hrulefill}\thinspace}


\begin{thebibliography}{CRM11}
\bibitem[Ana12]{An}
A.~Ananyevskiy, \emph{On the algebraic ${K}$-theory of some homogeneous
  varieties}, Doc.\ Math.\ \textbf{17} (2012), 167--193.

\bibitem[Be{\u\i}78]{beilinson1}
A.~A.~Be{\u\i}linson, \emph{Coherent sheaves on {${\bf P}^{n}$} and problems in
  linear algebra}, Funktsional.\ Anal.\ i Prilozhen.\ \textbf{12} (1978), no.~3,
  68--69.

\bibitem[Be{\u\i}84]{beilinson2}
\bysame, \emph{The derived category of coherent sheaves on {${\bf P}^n$}},
  Selecta Math.\ Soviet.\ \textbf{3} (1983/84), no.~3, 233--237, Selected
  translations.

\bibitem[BGG78]{BGG}
I.~N.~Bern{\v{s}}te{\u\i}n, I.~M.~Gelfand, and S.~I.~Gelfand, \emph{Algebraic
  vector bundles on {${\bf P}^{n}$} and problems of linear algebra},
  Funktsional.\ Anal.\ i Prilozhen.\ \textbf{12} (1978), no.~3, 66--67.

\bibitem[Bou02]{Bou:g4}
N.~Bourbaki, \emph{Lie groups and {L}ie algebras: Chapters 4--6},
  Springer-Verlag, Berlin, 2002.

\bibitem[Bou05]{Bou:g7}
\bysame, \emph{Lie groups and {L}ie algebras: Chapters 7--9}, Springer-Verlag,
  Berlin, 2005.

\bibitem[CRM11]{CdRM-R}
L.~Costa, S.~Di~Rocco, and R.~M.~Mir{\'o}-Roig, \emph{Derived category of
  fibrations}, Math.\ Res.\ Lett.\ \textbf{18} (2011), no.~3, 425--432.

\bibitem[Dem74]{demazure:Schubert}
M.~Demazure, \emph{D{\'e}singularisation des vari{\'e}t{\'e}s de {S}chubert
  g{\'e}n{\'e}ralis{\'e}es}, Ann.\ Sci.\ {\'E}cole Norm.\ Sup.\ (4) \textbf{7}
  (1974), 53--88, Collection of articles dedicated to Henri Cartan on the
  occasion of his 70th birthday, I.

\bibitem[FH91]{FH}
W.~Fulton and J.~Harris, \emph{Representation theory: a first course}, Graduate
  texts in mathematics, vol.~129, Springer, 1991.

\bibitem[GR87]{GR87}
A.~L.~Gorodentsev and A.~N.~Rudakov, \emph{Exceptional vector bundles on
  projective spaces}, Duke Math.\ J.\ \textbf{54} (1987), no.~1, 115--130.

\bibitem[Gri80]{Griffith}
W.~L.~Griffith, Jr., \emph{Cohomology of flag varieties in characteristic
  {$p$}}, Illinois J.\ Math.\ \textbf{24} (1980), no.~3, 452--461.

\bibitem[Jan03]{jantzen}
J.~C.~Jantzen, \emph{Representations of algebraic groups}, 2nd ed.,
  Mathematical Surveys and Monographs, vol.~107, American Mathematical Society,
  Providence, RI, 2003.

\bibitem[Kap83]{kapranov:Grassman1}
M.~M.~Kapranov, \emph{The derived category of coherent sheaves on {G}rassmann
  varieties}, Funktsional.\ Anal.\ i Prilozhen.\ \textbf{17} (1983), no.~2,
  78--79.

\bibitem[Kap84]{kapranov:Grassmann2}
\bysame, \emph{Derived category of coherent sheaves on {G}rassmann manifolds},
  Izv.\ Akad.\ Nauk SSSR Ser.\ Mat.\ \textbf{48} (1984), no.~1, 192--202.

\bibitem[Kap86]{kapranov:quadric}
\bysame, \emph{Derived category of coherent bundles on a quadric}, Funktsional.\
  Anal.\ i Prilozhen.\ \textbf{20} (1986), no.~2, 67.

\bibitem[Kap88]{kapranov:homogeneous}
\bysame, \emph{On the derived categories of coherent sheaves on some
  homogeneous spaces}, Invent.\ Math.\ \textbf{92} (1988), no.~3,
  479--508.

\bibitem[K\"oc91]{kock}
B.~K{\"o}ck, 
\emph{Chow motif and higher Chow theory of $G/P$}, 
Manuscripta Math.\ \textbf{70} (1991), 363--372.

\bibitem[KP11]{kuzpol}
A.~Kuznetsov and A.~Polischchuk, \emph{Exceptional collections on isotropic
  {G}rassmannians}, preprint arxiv:1110.5607, 2011.

\bibitem[Kuz08]{kuznetsov:resolutions}
A.~Kuznetsov, \emph{Lefschetz decompositions and categorical resolutions
  of singularities}, Selecta Math.\ (N.S.) \textbf{13} (2008), no.~4, 661--696.

\bibitem[Kuz09]{kuznetsov:hochschild}
\bysame, \emph{Hochschild homology and semiorthogonal decompositions},
  preprint math.AG/0904.4330, 2009.

\bibitem[Kuz11]{kuznetsov:base_change}
\bysame, \emph{Base change for semiorthogonal decompositions}, Compos.\ Math.\
  \textbf{147} (2011), no.~3, 852--876.

\bibitem[MM12]{marcolli_tabuada}
G.~Tabuada and M.~Marcolli, \emph{From exceptional collections to motivic
  decompositions via noncommutative motives}, preprint arXiv:1202.6297, 2012.

\bibitem[Orl93]{Orlov}
D.~O.~Orlov, \emph{Projective bundles, monoidal transformations, and derived
  categories of coherent sheaves}, Russian Acad.\ Sci.\ Izv.\ Math.\ \textbf{41}
  (1993), no.~1, 133--141.

\bibitem[Orl05]{orlov:motives}
\bysame, \emph{Derived categories of coherent sheaves, and motives},
  Uspekhi Mat.\ Nauk \textbf{60} (2005), no.~6(366), 231--232.

\bibitem[Pan94]{Panin:twisted}
I.~A.~Panin, \emph{On the algebraic {$K$}-theory of twisted flag varieties},
  $K$-Theory \textbf{8} (1994), no.~6, 541--585.

\bibitem[Ste75]{steinberg}
R.~Steinberg, \emph{On a theorem of {P}ittie}, Topology \textbf{14} (1975),
  173--177.

\bibitem[Wal03]{Walter}
C.~Walter, \emph{{G}rothendieck-{W}itt groups of projective bundles},
  K-theory preprint archive {\tt http://www.math.uiuc.edu/K-theory/644/},
  2003.

\bibitem[Zai]{Z:twisted}
K.~Zainoulline, \emph{Twisted $\gamma$-filtration of a linear algebraic group},
  to appear in \emph{Compositio Math.}
\end{thebibliography}
\end{document}